\documentclass[]{article}
\usepackage{geometry}
\geometry{a4paper,left=2.25cm,right=2.25cm, bottom=2.5cm, top=2cm}

\usepackage{algorithm}
\usepackage{algpseudocode}

\usepackage{amsmath}
\usepackage{amssymb}

\usepackage{bigfoot}

\usepackage{enumerate}

\usepackage{fancyhdr}
\pagestyle{fancy}

\usepackage{fancyvrb}

\usepackage[hidelinks]{hyperref} 

\usepackage{amsthm} 
\newtheorem{definition}{Definition}

\newtheorem{cor}{Corollary}
\newtheorem{prop}{Proposition}
\newtheorem{lemma}{Lemma}

\newtheorem{theorem}{Theorem}
\newtheorem*{theorem*}{Theorem}
\newtheorem{remark}{Remark}

\newtheorem*{example*}{Example}



\usepackage{mathrsfs}
\usepackage{physics}

\usepackage{xcolor} 

\usepackage{caption}
\usepackage{subcaption}
\usepackage{graphicx}


\title{Computing nonlinear Schr\"odinger equations with Hermite functions beyond harmonic traps}
\author{
Valeria Banica$^{1}$
\and
Georg Maierhofer$^{2}$
\and
Katharina Schratz$^{1}$
}
\date{}

\newcommand{\affil}{\vspace{0.5em}\noindent}

\usepackage{xcolor}
\newcommand\georg[1]{\textcolor{black}{#1}}

\usepackage[toc,page]{appendix}

\usepackage{enumerate}

\usepackage{bm}

\usepackage{mathdots}

\newcommand{\R}{\mathbb{R}}
\newcommand{\N}{\mathbb{N}}

\usepackage{bibspacing}
\setlength{\bibitemsep}{.4\baselineskip plus .05\baselineskip minus .05\baselineskip}

\rhead{}
\begin{document}
\maketitle
\begin{center}
\affil
$^{1}$ Laboratoire Jacques-Louis Lions (UMR 7598), Sorbonne Université, Paris, France

\affil
$^{2}$ Department of Applied Mathematics and Theoretical Physics, University of Cambridge, UK
\end{center}\ \\
\begin{abstract}
Hermite basis functions are a powerful tool for the spatial discretisation of Schr\"odinger equations with harmonic potential. In this work, we show that  their stability properties extend to the simulation of  Schr\"odinger equations without harmonic potential, thus making them a natural basis for the computation of nonlinear dispersive equations on unbounded domains. Building on this spatial discretisation, we introduce a novel unconditionally stable numerical method for the derivative nonlinear Schr\"odinger equation. Our theoretical results are supported with extensive numerical examples.
\end{abstract}

\paragraph{Keywords:} Hermite functions; spectral methods; nonlinear Schr\"odinger equations
\paragraph{MSC codes:} 65M70, 35Q41, 35Q55 
\section{Introduction}
In this work we are interested in developing numerical methods for nonlinear Schr\"odinger type equations on unbounded domains of the form
\begin{align}\label{eqn:general_schroedinger}
   \begin{cases} i\partial_t \psi=-\Delta\psi+V(\psi,x),\ &(t,x)\in [0,T]\times \mathbb{R}^d,\\
   \psi(0,x)=\psi_0(x),\ &x\in\mathbb{R}^d,
   \end{cases}
\end{align}
where $T$ is the time of existence of solutions to \eqref{eqn:general_schroedinger} (this may also be infinity) and $\psi_0$ denotes the initial value. We want to deal with the unboundedness of the spatial domain $\mathbb{R}^d$. In  PDE analysis  the full space setting is easier to handle than, for instance, the torus due to the dispersive nature of the solution on $\mathbb{R}^d$. Numerically, on the other hand, unbounded domains are computationally much more challenging.

Schr\"odinger equations appear as central models in a number of domains from solid state physics, to fibre optics and, as a result, their numerical approximation has been widely studied \cite{Jin_Markowich_Sparber_2011}. In particular, some of the most popular approaches are finite difference schemes \cite{markowich1999numerical,faou2025fully} and, the focus of the present work, spectral methods \cite{gottlieb1977numerical,iserles2009first,trefethen2000}. So far, Fourier spectral methods \cite{bao2003numerical,bao2002time,bao2003anumerical,ostermann2022error,banica2024numerical} are, in practice, often the preferred computational tool, due to the diagonal structure of $\Delta$ in the Fourier basis lending it very nicely for the construction of splitting schemes. The downside, however, is that the use of Fourier series requires periodic boundary conditions. Thus the common practice is to truncate the domain (usually to a symmetric interval $[-L,L]\subset\mathbb{R}$) and apply the Fourier spectral method on $[-L,L]$. A particularly promising way of adaptively choosing this truncation was recently presented in \cite{iserles2024convergence}. However, the dynamics of this ``truncated'' surrogate Schr\"odinger equation is fundamentally different from the original equation \eqref{eqn:general_schroedinger} and, in particular, the unbounded domain case is not recovered in the large-box limit as rigorously shown in \cite{faou2016weakly}. As a result, we cannot hope to get a reliable description of the dynamics of \eqref{eqn:general_schroedinger} by assuming a large torus $\mathbb{T}^d_L = [-\pi/L,\pi/L]^d$ instead of the full space $\mathbb{R}^d$.

This observation has informed a recent push in the use of spatial discretisations that are more adapted to unbounded domains \cite{iserles2021solving}. A possible choice of basis for this purpose are Malmquist--Takenaka functions discovered by \cite{takenaka1925orthogonal,malmquist1925determination} and recently reintroduced and studied more thoroughly as part of a larger class of orthogonal systems with skew-Hermitian differentiation matrices in \cite{luong2023approximation,iserles2023approximation,iserles2020family}.

In this work, we revisit a different orthogonal system, the Hermite basis, which traditionally has been used if $V$ in \eqref{eqn:general_schroedinger} contains a harmonic trap, i.e. $V(\psi,x)=|x|^2+\tilde{V}(\psi,x)$ (also referred to as Gross--Pitaevskii systems). This is because the Hermite functions are eigenfunctions of the operator $-\Delta+|x|^2$ thus permitting the construction of highly efficient splitting methods. The properties of this basis (in combination with time splitting methods) in the  Gross--Pitaevskii case have been successfully studied \cite{Gauckler2011,Thalhammer2012,ThCaNe2009,Lasser_Lubich_2020,carles2025time} and applied also in the compactified-time case \cite{carles2025scattering}.

The fundamental question we address in this work is: can we extend the use of Hermite functions also to the case where no trapping potential $|x|^2$ is present in the equation? In the context of splitting methods, a central point for concern for this application is the stability of the free Schr\"odinger flow, $\exp(it\Delta)$ in a truncated Hermite basis. In this work, we address precisely this point. Based on fundamental PDE estimates on the control of weighted Sobolev norms \cite{ginibre1979sigma} we justify the use of Hermite basis functions even if no trapping potential is present. {These weighted Sobolev spaces, $\Sigma^k$, are the physically natural spaces to look for solutions in the context of Hermite expansions, seeing as they are the natural analogue of standard Sobolev spaces for dispersive equations when both regularity and spatial decay must be controlled.} This allows us to use Hermite basis functions for a large class of nonlinear Schr\"odinger equations on unbounded domains. 

In addition to controlling unboundedness in the spatial domain another numerical obstacle arises when stiffness enters the potential $V$. The latter occurs, for instance, when  the nonlinearity $V$ involves spatial derivatives, such as in derivative nonlinear Schr\"odinger equations (DNLSE) on the real line
\begin{align}\label{eqn:DNLSE2}
    i\partial_t\psi+\partial_x^2\psi-2i\delta\partial_x(|\psi|^2\psi)=0, \quad x\in\mathbb{R},
\end{align}
where $\delta\in\mathbb{R}\setminus\{0\}$, i.e. \eqref{eqn:general_schroedinger} with $V(\psi,x)=i\delta\partial_x(|\psi|^2\psi)$. Equation \eqref{eqn:DNLSE2} was first derived in \cite{mio1976modified,mjolhus1976modulational} to model the propagation of circular polarised nonlinear Alfv\'en waves in cold plasma and is also at the heart of very recent cutting-edge theoretical studies \cite{MR4448993,MR4628747}. In the second part of this paper, with the aid of a gauge transform, we introduce a fully explicit unconditionally stable Hermite spectral method for \eqref{eqn:DNLSE2}. The choice of gauge transform was originally introduced in PDE analysis by \cite{hayashi1993initial} based on work by \cite{kundu1984landau}, and has been successfully used in the theoretical analysis of \eqref{eqn:DNLSE2}  in low regularity spaces \cite{hayashi1992derivative,hayashi1994modified,MR1406687,MR1693278, MR1871414,MR1950826,herr2006cauchy}, but not yet in numerical analysis literature.

\subsection{Contents}

The rest of this manuscript is structured as follows. In \S\ref{sec:spatial_discretisation} we describe our choice of spatial discretisation, the Hermite basis, which forms the central ingredient in our construction of methods for \eqref{eqn:general_schroedinger}. In particular, in \S\ref{sec:stability_of_free_schroedinger_flow_sigma} we prove the central basis for the results in this work, namely the stability of the free Schr\"odinger flow in $\Sigma^k$-norms which are the natural norms to study convergence properties of Hermite spectral methods. In \S\ref{sec:standard_cubic_NLSE} we apply this framework to the cubic nonlinear Schr\"odinger equation (NLSE) on the real line including a fully discrete error analysis of the corresponding splitting method. This is followed, in \S\ref{sec:DNLSE} by our construction of fully explicit unconditionally stable integrators for the DNLSE using the R-transform and, finally, in \S\ref{sec:numerical_examples} with numerical examples evaluating our methods.

\section{Spatial discretisation: the Hermite basis}\label{sec:spatial_discretisation}
As mentioned above, a very natural choice for the spatial discretisation of \eqref{eqn:general_schroedinger} on an unbounded domain is a Hermite spectral method (cf. \cite{ThCaNe2009}) based on the following $L^2(\R^d)$-orthonormal basis:
\begin{align*}
\mathcal{H}_m(x)=\prod_{j=1}^d\left(\mathrm{H}_{m_j}(x_j) \mathrm{e}^{-\frac{1}{2}x_j^2}\right), \quad m\in\N^{d},
\end{align*}
where $\mathrm{H}_{m_j}$ denotes the Hermite polynomial of degree $m_j\in\N$ normalised with respect to the weight $w(x)=\exp(-x^2)$, that is $H_{m_j}(y)=C_{m_j}(-1)^{m_j}e^{y^2}\partial_y^{m_j}e^{-y^2}$ with an appropriate normalisation constant $C_{m_j}>0$. The $\mathcal{H}_m$ are eigenfunctions of the Schr\"odinger operator with harmonic potential, $H=-\Delta+|x|^2$, with
\begin{align}\label{eq:eigenfunction_property}
	H(\mathcal{H}_m)=\left(d+2\sum_{j=1}^d m_j\right) \mathcal{H}_m.
\end{align}
The above relation shows why Hermite basis functions are typically used only in the case of harmonic traps, i.e. for potentials involving $\vert x\vert^2$. The natural spaces to study the convergence behaviour of Fourier spectral methods are so-called $\Sigma^k$-norms given in \cite{ginibre1979sigma} which are defined, for a given $k\in\mathbb{N}$, as
\begin{align}\label{eqn:expression_Sigma_k_regularity+decay}
    \|f\|_{\Sigma^k}:=\|f\|_{H^k(\R^d)}+ \||x|^k f\|_{L^2(\R^d)}
\end{align}
for $f\in L^2(\mathbb{R}^d)$. We will work on $\Sigma^k = \left\{ f\in L^2(\R^d),\quad
    \|f\|_{\Sigma^k}<\infty\right\}$,
and, by the equivalence of norms (see e.g. \cite{BCM08,Helffer1984})
\begin{equation}\label{eqn:details_sigma_norm_in_hermite_basis}
\|f\|_{\Sigma^k}^2\sim \sum_{m\in \N^d} \lambda_m^k |\alpha_m|^2,\quad
\lambda_m = \frac{d}{2}+\sum_{j=1}^d m_j,\quad
\text{for } f(x) = \sum_{m\in \N^d} \alpha_m  \mathcal{H}_m(x),
\end{equation}
we can equivalently regard $\Sigma^k$ as spaces of $L^2$-functions with appropriately decaying Hermite coefficients.
\begin{remark}
  In numerical analysis literature (cf. \cite{Gauckler2011}) the space $\Sigma^k$ is sometimes denoted by $\widetilde{H}^k$, and in theoretical PDE literature by $H^{k,k}$.
\end{remark}
Naturally, $\Sigma^k$ are Hilbert spaces and we list some of the central properties of $\Sigma^k$ (algebra properties, stable transforms and differentiation) in Appendices~\ref{app:properties_Sigma_k} \& \ref{sec:computing_with_hermite_expansions}. In particular, we note that the Laplacian operator can be applied stably and efficiently to a Hermite expanded function $f(x)=\sum_{m=0}^{M-1}\alpha_m\mathcal{H}_m(x)$ by computing, in one dimension,\vspace{-0.5cm}
\begin{align*}
    \bm{\delta}=-\mathsf{D}_{\lambda}\bm{\alpha}+\mathsf{T}\mathsf{D}_{|x|^2}\mathsf{T}^{-1}\bm{\alpha}
\end{align*}
where $\bm{\delta}=(\delta_0,\dots,\delta_{M-1})$ are the Hermite coefficients of $\Delta f$, $\mathsf{T}$ is the transformation matrix from function values to coefficients and $D_{\lambda},D_{|x|^2}$ are two diagonal matrices (cf. \eqref{eqn:Hermite_coeffs_of_laplacian}) and with the standard tensor-product extension to higher dimensions.

\begin{remark} An alternative way to extend these approximation spaces to the multivariate setting is using Hagedorn functions \cite[\S4]{Lasser_Lubich_2020} which provide a transported version of Hermite functions and offer an efficient way of tracking and resolving solution variations. This comes at the cost of more complex theory and, in order to emphasise the main point of this work, which is that splitting methods are feasible in the Hermite basis even without harmonic traps, we focus on the Hermite basis case instead.
\end{remark}

\subsection{Stability of the free Schr\"odinger flow in $\Sigma^k$ norms}\label{sec:stability_of_free_schroedinger_flow_sigma}
The central stability result that guarantees the safe application of Hermite spectral methods even when no harmonic potential is present is the following observation from Lemma 1.2 in \cite{ginibre1979sigma}.
\begin{prop}\label{prop:stability_free_schroedinger}For any $k\in\mathbb{N}$ there is a constant $C>0$ such that for any $u_0\in \Sigma^{k}$ and any $t\geq0$ we have
	\begin{align*}
		\left\|e^{it\Delta}u_0\right\|_{\Sigma^{k}}\leq (1+Ct)^k \|u_0\|_{\Sigma^k},
	\end{align*}
    i.e. the free Schr\"odinger flow is stable in $\|\cdot\|_{\Sigma^k}$.
\end{prop}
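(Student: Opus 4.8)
The plan is to split the $\Sigma^k$-norm into its two constituent parts, $\|\cdot\|_{H^k}$ and $\||x|^k\cdot\|_{L^2}$, and treat them separately, exploiting that $e^{it\Delta}$ is a Fourier multiplier of unit modulus (hence an isometry on every $H^s$) and that it intertwines the multiplication operators $x_j$ with the \emph{Galilean operators} $J_j(t):=x_j+2it\partial_j$. A short Heisenberg-type computation using $[\Delta,x_j]=2\partial_j$ and $\partial_j e^{it\Delta}=e^{it\Delta}\partial_j$ produces the commutation identity
\begin{equation*}
  x_j\, e^{it\Delta} = e^{it\Delta}\,(x_j - 2it\partial_j), \qquad j = 1,\dots,d,
\end{equation*}
equivalently $J_j(t)e^{it\Delta}=e^{it\Delta}x_j$, which is the engine of the whole argument.

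For the $H^k$-part there is nothing to prove: since $\partial^\beta$ commutes with $e^{it\Delta}$ and $e^{it\Delta}$ preserves $L^2$, we have $\|\partial^\beta e^{it\Delta}u_0\|_{L^2}=\|\partial^\beta u_0\|_{L^2}$ for every multi-index $\beta$, so $\|e^{it\Delta}u_0\|_{H^k}=\|u_0\|_{H^k}$ \emph{exactly}. The content of the statement therefore lies entirely in the weighted term. I would first reduce the (for odd $k$ awkward) weight $|x|^k$ to polynomial weights via $|x|^{2k}=\big(\sum_j x_j^2\big)^k=\sum_{|\gamma|=k}\binom{k}{\gamma}x^{2\gamma}$, so that
\begin{equation*}
  \||x|^k e^{it\Delta}u_0\|_{L^2}^2 = \sum_{|\gamma|=k}\binom{k}{\gamma}\,\|x^\gamma e^{it\Delta}u_0\|_{L^2}^2 .
\end{equation*}
Iterating the commutation identity componentwise gives $x^\gamma e^{it\Delta}=e^{it\Delta}(x-2it\nabla)^\gamma$, where $(x-2it\nabla)^\gamma:=\prod_j (x_j-2it\partial_j)^{\gamma_j}$, so that by the $L^2$-isometry $\|x^\gamma e^{it\Delta}u_0\|_{L^2}=\|(x-2it\nabla)^\gamma u_0\|_{L^2}$.

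The final step is to expand $(x-2it\nabla)^\gamma$ as a polynomial in $t$. Its constant ($t^0$) coefficient is exactly $x^\gamma$, while the coefficient of $t^j$ for $j\geq1$ is --- after reordering the finitely many factors using $[\partial_l,x_l]=1$ --- a finite combination of monomials $x^\alpha\partial^\beta$ with $|\alpha|+|\beta|\leq k$. The standard monomial bounds $\|x^\alpha\partial^\beta f\|_{L^2}\leq C\|f\|_{\Sigma^{|\alpha|+|\beta|}}$ (Appendix~\ref{app:properties_Sigma_k}) then control every $t^j$-term with $j\geq1$ by $C_\gamma t^j\|u_0\|_{\Sigma^k}$, whereas the $t^0$-term contributes precisely $\|x^\gamma u_0\|_{L^2}$. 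Summing over $\gamma$ and recombining with the conserved $H^k$-part yields a bound of the form $\|e^{it\Delta}u_0\|_{\Sigma^k}\leq(1+q(t))\|u_0\|_{\Sigma^k}$, where $q$ is a polynomial of degree $k$ with $q(0)=0$; choosing $C$ large enough that $q(t)\leq(1+Ct)^k-1$ for all $t\geq0$ gives the claim.

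I expect the main obstacle to be precisely the bookkeeping that pins down the \emph{exact} constant $1$ at $t=0$, which is what distinguishes the clean bound $(1+Ct)^k$ from a generic $C(1+t)^k$: the point is that the untouched $t^0$-term in the expansion reproduces the weighted norm $\||x|^k u_0\|_{L^2}$ with no loss, so that the norm-equivalence constants (which are $\geq1$) only ever appear multiplied by strictly positive powers of $t$ and are therefore harmless at $t=0$. A secondary, purely technical, point is the noncommutative expansion of $(x-2it\nabla)^\gamma$ together with the verification that each commutator correction lowers the total operator degree and hence stays within the range covered by the $\Sigma^k$ monomial bounds.
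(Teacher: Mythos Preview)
Your proof is correct and rests on the same engine as the paper's --- the Galilean commutation $x_j\,e^{it\Delta}=e^{it\Delta}(x_j-2it\partial_j)$ --- but the organisation differs. The paper establishes the base case $k=1$ exactly as you do, and then runs an \emph{induction} on $k$: since both $J(t)u(t)$ and $\nabla u(t)$ are themselves free Schr\"odinger evolutions (with data $xu_0$ and $\nabla u_0$), the inductive hypothesis applied to these gives control of $\||x|^k J(t)u(t)\|_{L^2}$ and $\||x|^k\nabla u(t)\|_{L^2}$ directly, from which $\||x|^{k+1}u(t)\|_{L^2}$ follows by one triangle inequality. Your approach instead expands $(x-2it\nabla)^\gamma$ for all $|\gamma|=k$ in one shot and controls each $t^j$-coefficient by the monomial bound $\|x^\alpha\partial^\beta f\|_{L^2}\lesssim\|f\|_{\Sigma^{|\alpha|+|\beta|}}$. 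The induction is tidier --- it sidesteps the noncommutative bookkeeping entirely and never needs the mixed monomial estimate --- while your direct expansion makes the polynomial-in-$t$ structure of the bound fully explicit. Two minor points: (i) the ``summing over $\gamma$'' step must be done as a triangle inequality in the weighted $\ell^2$-space (i.e.\ on the $L^2$-valued vector $(x^\gamma e^{it\Delta}u_0)_\gamma$) rather than after squaring, to preserve the exact constant $1$ at $t=0$ that you correctly identify as the crux; (ii) the resulting $q(t)$ is not literally a polynomial but the square root of one, though it still satisfies $q(t)=O(t)$ near $0$ and $O(t^k)$ at infinity, so the final absorption into $(1+Ct)^k-1$ goes through unchanged.
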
\begin{proof} {For the sake of completeness we include the short proof of this estimate.} We begin by proving the result for $k=1$.
Let us introduce the {{so-called Galilean}} operator
$J(t) := x + 2it\nabla$, which commutes with the free Schr\"odinger differential equation:
\[
(x + 2it\nabla)(i\partial_t f + \Delta f)
=
(i\partial_t + \Delta)(xf + 2it\nabla f).
\]
Thus considering \(u(t):=e^{it\Delta}u_0\), the solution of 
\[
\begin{cases}
i\partial_t u + \Delta u = 0, \\
u|_{t=0} = u_0,
\end{cases}
\]
we observe that
\(v(t) := J(t)u(t)\) solves
\[
\begin{cases}
i\partial_t v + \Delta v = 0, \\
v|_{t=0} = J(0)u_0 = xu_0.
\end{cases}
\]
The linear Schr\"odinger equation conserves the $L^2$-norm, i.e. $\|v(t)\|_{L^2} = \|v(0)\|_{L^2}$, hence 
\[
\|xu(t) + 2it\nabla u(t)\|_{L^2} = \|xu_0\|_{L^2}.
\]
Therefore
\[
\|xu(t)\|_{L^2}
\le
\|xu_0\|_{L^2} + 2t\|\nabla u(t)\|_{L^2}.
\]
Using conservation of the \(\dot H^1\)-norm for the linear Schrödinger equation, $\|\nabla u(t)\|_{L^2} = \|\nabla u_0\|_{L^2},$ we obtain
\[
\|xu(t)\|_{L^2}
\le
\|xu_0\|_{L^2} + 2t\|\nabla u_0\|_{L^2}.
\]
Recall
\[
\|u(t)\|_{\Sigma^1}
=
\|xu(t)\|_{L^2} + \|u(t)\|_{H^1}.
\]
Thus
\begin{align}\label{eqn:aux_identity_1}
\|u(t)\|_{\Sigma^1}
\le
\|xu_0\|_{L^2} + (2t+1)\|\nabla u_0\|_{L^2}+\|u_0\|_{H^1}
\le
(2t+1)\|u_0\|_{\Sigma^1},
\qquad t\geq 0.
\end{align}
This completes the proof for $k=1$. We prove the result for $k\geq 2$ by induction. Suppose we have already shown\vspace{-0.3cm}
\begin{align}\label{eqn:aux_identity_k-1}
    \|u(t)\|_{\Sigma^k}\leq (1+c_{k}t)^{k}\|u_0\|_{\Sigma^{k}}
\end{align}
for some $c_{k}>0$, for all data $u_0\in\Sigma^k$. This is true for $k=1$ as shown in \eqref{eqn:aux_identity_1}. Estimate \eqref{eqn:aux_identity_k-1} holds for $v(t)=J(t)u(t)$ and for $\nabla u(t)$ since both  are linear Schr\"odinger evolutions - indeed $\nabla$ and $J(t)$ commute with the free Schr\"odinger operator. Therefore we have
\begin{align}\label{eqn:induction_step1}
    \||x|^{k}(x+2it\nabla) u(t)\|_{L^2}&\leq (1+c_{k}t)^{k}\|xu_0\|_{\Sigma^{k}},\\\label{eqn:induction_step2}
    \||x|^{k}\nabla u(t)\|_{L^2}&\leq (1+c_{k}t)^{k}\|\nabla u_0\|_{\Sigma^{k}}.
\end{align}
We deduce
\begin{align*}
    \||x|^{k+1}u(t)\|_{L^2}\leq (1+c_{k}t)^{k}\|xu_0\|_{\Sigma^{k}}+2t(1+c_{k}t)^{k}\|\nabla u_0\|_{\Sigma^{k}}\leq (1+2t)(1+c_kt)^k\|u_0\|_{\Sigma^{k+1}},
\end{align*}
thus,  adding $\|u(t)\|_{H^{k+1}}$ to both sides,
\begin{align*}
   \|u(t)\|_{\Sigma^{k+1}}\leq (1+(1+2t)(1+c_kt)^k)\|u_0\|_{\Sigma^{k+1}}\leq (1+c_{k+1}t)^{k+1}\|u_0\|_{\Sigma^{k+1}},
\end{align*}
for an appropriately chosen $c_{k+1}>0$, completing the induction step. This  completes the proof.
\end{proof}
\subsection{Interpolation operator and Hermite series truncation}\label{sec:spatial_truncation_error}
This section summarises some of the basic definitions and estimates to set the stage for our analysis of the Hermite spectral method. These results are standard and be found for example in the important prior works \cite{Gauckler2011,Thalhammer2012}. In what follows we will apply a Hermite spectral semidiscretisation in space using the Hermite quadrature points $x_0,\dots, x_{M-1}$ (with the obvious tensor product extension to $d\geq 2$). We represent functions in the form
\begin{align*}
    \psi_{M}(t,x)=\sum_{\substack{m\in\mathbb{N}^d\\0\leq m\leq M-1}}\alpha_m(t)\mathcal{H}_m(x).
\end{align*}
The space of all such functions is the Hermite approximation space $\mathcal{S}_M\subset L^2$. To deal with the nonlinearities we introduce the interpolation operator $\mathcal{Q}_{M}$ as follows (cf. Definition 3.2 in \cite{Gauckler2011}):
\begin{definition} For $u\in\Sigma^k, k\in\mathbb{N}$ we define the interpolation $\mathcal{Q}_M(u)$ to be the unique function in $\mathcal{S}_M$, i.e.
\begin{align*}
     \mathcal{Q}_M(u)(x)=\sum_{\substack{m\in\mathbb{N}^d\\0\leq m\leq M-1}}\hat{u}_m\mathcal{H}_m(x),
\end{align*}
for some $\hat{u}_m$ such that $\mathcal{Q}_M(u)(x_m)=u(x_m)$ for all $m\in\mathbb{N}^d, 0\leq m\leq M-1$. The uniqueness of this function is guaranteed by exactness of Gauss--Hermite quadrature.
\end{definition}
The interpolation operator has the following properties which are central to our fully-discrete error analysis in later parts of this work.

\begin{lemma}\label{lem:Hermite_interpolation}
For $u \in \Sigma^k$ with an integer $k \ge d$ we have
\begin{equation}
\|u - \mathcal{Q}_M(u)\|_{\Sigma^{k'}}
\le
C\, M^{\frac d3 - \frac12 (k - k')}\, \|u\|_{\Sigma^k}
\end{equation}
for $k' \le k$ and with a constant $C$ depending only on $d$, $k$ and $k'$.
\end{lemma}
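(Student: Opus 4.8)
The plan is to split the interpolation error into a spectral \emph{truncation} part and an \emph{aliasing} part, and to estimate each one in the Hermite-coefficient picture furnished by the norm equivalence \eqref{eqn:details_sigma_norm_in_hermite_basis}. Let $P_M$ denote the $L^2(\R^d)$-orthogonal projection onto $\mathcal{S}_M$, i.e.\ the truncation of the Hermite series to indices $0\le m\le M-1$, and write
\[
u-\mathcal{Q}_M u = (u-P_M u) + (P_M u-\mathcal{Q}_M u).
\]
The first summand is a pure projection error and the second is the aliasing error introduced by interpolating at the quadrature nodes.

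For the truncation term I would work directly with the coefficients. Using \eqref{eqn:details_sigma_norm_in_hermite_basis}, the tail indices $m\not\le M-1$ (i.e.\ $m_j\ge M$ for some $j$) satisfy $\lambda_m\ge \tfrac d2+M\ge M$, so that $\lambda_m^{k'-k}\le M^{k'-k}$ since $k'\le k$. Factoring this out gives
\[
\|u-P_M u\|_{\Sigma^{k'}}^2 \sim \sum_{m\not\le M-1}\lambda_m^{k'-k}\,\lambda_m^k|\alpha_m|^2 \le M^{k'-k}\,\|u\|_{\Sigma^k}^2,
\]
hence $\|u-P_M u\|_{\Sigma^{k'}}\lesssim M^{-\frac12(k-k')}\|u\|_{\Sigma^k}$. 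This is the clean spectral rate and carries \emph{no} factor $M^{d/3}$.

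For the aliasing term I would exploit that $\mathcal{Q}_M$ acts as the identity on $\mathcal{S}_M$, so $\mathcal{Q}_M P_M u=P_M u$ and therefore $P_M u-\mathcal{Q}_M u=-\mathcal{Q}_M(u-P_M u)$: the aliasing error is exactly the interpolant of the high-frequency tail $g:=u-P_M u$, whose expansion involves only modes $n\not\le M-1$. The task thus reduces to bounding $\|\mathcal{Q}_M g\|_{\Sigma^{k'}}$. Writing the interpolation coefficients via the discrete Gauss--Hermite inner product $\hat g_m=\sum_l w_l\,g(x_l)\mathcal{H}_m(x_l)$, I would use exactness of Gauss--Hermite quadrature for products of degree $\le 2M-1$ (which gives discrete orthonormality of $\{\mathcal{H}_m\}_{m\le M-1}$), then insert $g(x_l)=\sum_{n\not\le M-1}\alpha_n\mathcal{H}_n(x_l)$ and apply Cauchy--Schwarz against the weights $\lambda_n^{\pm k}$. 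This separates $\|u\|_{\Sigma^k}^2$ from a node-sum of the form $\sum_{n\not\le M-1}\lambda_n^{-k}\sum_l w_l|\mathcal{H}_n(x_l)|^2$ (and its $\lambda_m^{k'}$-weighted variants).

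The main obstacle is precisely this last node-sum. Controlling $\sum_l w_l|\mathcal{H}_n(x_l)|^2$ for high modes $n\not\le M-1$ requires sharp pointwise control of the Hermite functions, most delicately near the largest node $x\sim\sqrt{2M}$ where the Plancherel--Rotach asymptotics degenerate into Airy turning-point behaviour; this is exactly where the non-trivial factor $M^{d/3}$ arises (one factor $M^{1/3}$ per coordinate via the tensor-product construction), and where the hypothesis $k\ge d$ is needed so that the $d$-dimensional mode sums $\sum_{n}\lambda_n^{-k}\cdots$ converge. I would therefore adapt the one-dimensional Hermite-quadrature estimates of \cite{ThCaNe2009,Gauckler2011,Thalhammer2012}, lift them to $\R^d$ by the tensor-product structure to obtain $\|\mathcal{Q}_M g\|_{\Sigma^{k'}}\lesssim M^{\frac d3-\frac12(k-k')}\|u\|_{\Sigma^k}$, and finally combine this with the truncation bound above, absorbing the strictly smaller projection term into the aliasing term to conclude the stated estimate.
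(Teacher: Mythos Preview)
The paper does not actually give a proof of this lemma: its entire argument is the one-line reference ``See Proposition~5.1 in \cite{Gauckler2011}.'' Your proposal therefore goes well beyond what the paper offers, and your sketch is correct and in line with the strategy of that reference: the truncation/aliasing split $u-\mathcal{Q}_M u=(u-P_Mu)-\mathcal{Q}_M(u-P_Mu)$, the straightforward tail bound for $u-P_Mu$ via \eqref{eqn:details_sigma_norm_in_hermite_basis}, and the identification of the aliasing contribution as the hard part requiring sharp pointwise control of Hermite functions at the quadrature nodes (Plancherel--Rotach/Airy asymptotics) are exactly the ingredients used in \cite{Gauckler2011}. You also correctly pinpoint the origin of the $M^{d/3}$ loss (one Airy factor $M^{1/3}$ per tensor direction) and the role of the hypothesis $k\ge d$ in ensuring summability of the weighted mode sums. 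In short, nothing to correct; you have simply supplied the content that the paper outsources to the citation.
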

\begin{proof}
    See Proposition 5.1 in \cite{Gauckler2011}.
\end{proof}

\begin{remark}
    Further results concerning the approximation properties of Hermite functions are available in \cite{BOYD1984382,Hille1939HermitianSeries,Hille1940HermitianSeriesII,Lasser_Lubich_2020}.
\end{remark}

\begin{lemma}\label{lem:product_interpolation_product_estimate}
Let $u,v \in \Sigma^k$ for $k > \frac d2$. Then we have
\begin{equation}
\|\mathcal{Q}_M(uv)\|_{\Sigma^0}
\le
\sup_{\substack{m\in\mathbb{N}^d\\0\leq m\leq M-1}}|u(x_m)| \, \|\mathcal Q(v)\|_{\Sigma^0} .
\end{equation}
\end{lemma}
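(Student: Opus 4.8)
The plan is to reduce everything to a discrete (Gauss--Hermite quadrature) representation of the $L^2$-norm on the approximation space $\mathcal{S}_M$ and to exploit that the interpolant reproduces pointwise values at the quadrature nodes. First I would note that $\Sigma^0$ coincides with $L^2(\R^d)$ up to a fixed constant (indeed $\|f\|_{\Sigma^0}=\|f\|_{H^0}+\||x|^0 f\|_{L^2}=2\|f\|_{L^2}$), so the constant cancels on both sides and it suffices to prove the inequality for the $L^2$-norm. Since $k>\tfrac d2$ gives the Sobolev embedding $\Sigma^k\hookrightarrow H^k\hookrightarrow C^0(\R^d)$, the functions $u$ and $v$ are continuous and the nodal values $u(x_m),v(x_m)$ are well defined; this is precisely where the hypothesis $k>\tfrac d2$ enters.

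The key step is an exact discrete norm identity on $\mathcal{S}_M$. In one dimension any $w\in\mathcal{S}_M$ has the form $w(x)=P(x)e^{-x^2/2}$ with $\deg P\le M-1$, so $|w(x)|^2=|P(x)|^2 e^{-x^2}$ with $|P|^2$ a (real) polynomial of degree at most $2M-2\le 2M-1$. Because the $M$-point Gauss--Hermite rule with nodes $x_k$ and weights $w_k$ is exact on polynomials of degree $\le 2M-1$, I obtain
\[
\|w\|_{L^2}^2=\int_{\R}|P(x)|^2 e^{-x^2}\,\dd x=\sum_{k}w_k\,|P(x_k)|^2=\sum_{k}\widetilde{w}_k\,|w(x_k)|^2,\qquad \widetilde{w}_k:=w_k\,e^{x_k^2}.
\]
Taking the tensor product over the $d$ coordinates yields $\|w\|_{L^2}^2=\sum_{m}\widetilde{w}_m\,|w(x_m)|^2$ for every $w\in\mathcal{S}_M$, with positive weights $\widetilde{w}_m=\prod_j\widetilde{w}_{m_j}>0$.

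With this identity the conclusion follows in one line. Applying it to $w=\mathcal{Q}_M(uv)\in\mathcal{S}_M$ and using $\mathcal{Q}_M(uv)(x_m)=u(x_m)v(x_m)$ gives
\[
\|\mathcal{Q}_M(uv)\|_{L^2}^2=\sum_m\widetilde{w}_m\,|u(x_m)|^2\,|v(x_m)|^2\le\Big(\sup_m|u(x_m)|^2\Big)\sum_m\widetilde{w}_m\,|v(x_m)|^2.
\]
Since $\mathcal{Q}_M(v)\in\mathcal{S}_M$ interpolates $v$, so that $\mathcal{Q}_M(v)(x_m)=v(x_m)$, the remaining sum equals $\|\mathcal{Q}_M(v)\|_{L^2}^2$ by the same identity; taking square roots and reinstating the $\Sigma^0$ normalisation finishes the proof.

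The main obstacle I anticipate is purely one of bookkeeping: verifying the exact degree of precision of the Gauss--Hermite rule relative to $\mathcal{S}_M$ so that the weighted-quadrature formula is a genuine equality (not an estimate) on all of $\mathcal{S}_M$. Everything downstream is an immediate consequence. I would also be careful to emphasise that $uv$ itself need not lie in $\mathcal{S}_M$: only its nodal values enter, through $\mathcal{Q}_M(uv)$, so no regularity of the product beyond the continuity already guaranteed by $k>\tfrac d2$ is required.
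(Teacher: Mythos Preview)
Your argument is correct and is precisely the standard proof of this fact: the exactness of $M$-point Gauss--Hermite quadrature on polynomials of degree $\le 2M-1$ yields the discrete Parseval identity $\|w\|_{L^2}^2=\sum_m\widetilde w_m|w(x_m)|^2$ for every $w\in\mathcal S_M$, and the lemma then follows by pulling out the sup of $|u(x_m)|$ and recognising the remaining sum as $\|\mathcal Q_M(v)\|_{L^2}^2$. The paper does not give its own proof at all but simply cites Lemma~5.2 of Gauckler (2011); your write-up is in fact the argument used there, so there is nothing to compare beyond noting that you have supplied the details the paper omits.
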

\begin{proof}
    See Lemma~5.2 in \cite{Gauckler2011}.
\end{proof}

\section{Hermite spectral methods for the cubic NLSE}\label{sec:standard_cubic_NLSE}
Proposition~\ref{prop:stability_free_schroedinger} immediately allows us to study the convergence properties of splitting methods for the cubic nonlinear Schr\"odinger equation (NLSE)
\begin{align}\label{eqn:cubic_NLSE}
\begin{cases}	i\partial_t\psi = -\Delta\psi +\mu|\psi|^2\psi,\\
\psi|_{t=0}=\psi_0,
\end{cases}
\end{align}
with $\mu\in\mathbb{R}\setminus\{0\}$ in $\Sigma^k$. This corresponds to \eqref{eqn:general_schroedinger} with potential $V(\psi,x)=\mu|\psi|^2$. In the following, we will study the convergence properties of the Lie splitting method, but note that this naturally extends to higher order splitting methods (similarly to the extension for Fourier spectral methods presented in \cite{koch2013error}). The method under consideration here is thus
\begin{align}\label{eqn:Lie_splitting}
	\psi^{n+1}=e^{i\tau\Delta}e^{-i\tau\mu|\psi^n|^2}\psi^n,
\end{align}
where $\tau>0$ is the time step and $\psi^n(x)$ approximates the exact solution $\psi(t,x)$ at time $t= n \tau$, i.e., $\psi^n(x)\approx \psi(n\tau,x)$.
\subsection{Semi-discrete convergence analysis}\label{sec:semi-discrete_temporal_analysis_cubic_NLSE}
Let us first focus on the temporal discretisation \eqref{eqn:Lie_splitting}. The central convergence result in this case is the following.

\begin{theorem}\label{thm:semi-discrete_error_analysis_cubicNLSE}
	Let $k> d/2$ be an integer. Suppose $\psi(t,x)$ is the exact solution of \eqref{eqn:cubic_NLSE}. Then there exists a $\tau_0>0$ such that for all $0 < \tau < \tau_0$,
    \begin{align*}
        \|\psi^n-\psi(t_n,\cdot)\|_{\Sigma^k}\leq C \tau,\quad \text{for all\ }0\leq t_n=n\tau\leq T
    \end{align*}
    with a constant $C>0$ depending only on $T,\sup_{t\in [0,T]}\|\psi(t,\cdot)\|_{\Sigma^{k+2}}, d, k$. 
\end{theorem}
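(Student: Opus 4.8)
The plan is to prove Theorem~\ref{thm:semi-discrete_error_analysis_cubicNLSE} via a standard Lady Windermere's fan argument, splitting the global error into a sum of local errors propagated by the numerical flow, and controlling everything in the $\Sigma^k$-norm using the stability of the free flow (Proposition~\ref{prop:stability_free_schroedinger}) together with the algebra property of $\Sigma^k$ (valid since $k>d/2$, stated in the appendices). First I would set up notation: let $\Phi^\tau$ denote the exact flow of \eqref{eqn:cubic_NLSE} over a step of length $\tau$ and $\Psi^\tau$ the numerical Lie-splitting flow from \eqref{eqn:Lie_splitting}, so that $\psi^{n+1}=\Psi^\tau\psi^n$ and $\psi(t_{n+1},\cdot)=\Phi^\tau\psi(t_n,\cdot)$. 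The global error then telescopes as
\[
\psi^n-\psi(t_n,\cdot)=\sum_{j=0}^{n-1}(\Psi^\tau)^{\,j}\bigl(\Psi^\tau-\Phi^\tau\bigr)\,\psi(t_{n-1-j},\cdot),
\]
so the two ingredients I need are (i) a \emph{local error} bound $\|(\Psi^\tau-\Phi^\tau)v\|_{\Sigma^k}\le C\tau^2$ for $v$ in a suitable $\Sigma^{k+2}$-ball, and (ii) \emph{stability} of the numerical propagator $\Psi^\tau$ in $\Sigma^k$ over the relevant solution set.

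For the local error (i), I would Taylor-expand both flows to second order in $\tau$. Writing $A=i\Delta$ for the linear part and $B(\psi)=-i\mu|\psi|^2\psi$ for the nonlinear part, the defect of Lie splitting is governed by the commutator $[A,B]$, and the leading local error term is $\tfrac{\tau^2}{2}\bigl([A,B]-[B,A]\bigr)$-type expression evaluated along the flow; the point is that this is $O(\tau^2)$ provided one can bound the relevant derivatives of the flow in $\Sigma^k$. Concretely, each term in the expansion involves at most two applications of $\Delta$ and some nonlinear products, so it can be estimated in $\Sigma^k$ by $\|v\|_{\Sigma^{k+2}}$ using that $\Delta$ maps $\Sigma^{k+2}\to\Sigma^k$ (from \eqref{eqn:details_sigma_norm_in_hermite_basis}, since $|x|^2$ and $-\Delta$ shift the index $k$ by $2$) and that $\Sigma^k$ is a Banach algebra for $k>d/2$ so the cubic nonlinearity and its Fr\'echet derivatives are bounded. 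This is exactly where the hypothesis on $\sup_{t}\|\psi(t,\cdot)\|_{\Sigma^{k+2}}$ enters: the local errors are evaluated along the \emph{exact} solution, whose $\Sigma^{k+2}$-norm is assumed bounded on $[0,T]$.

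For stability (ii), the free-flow factor $e^{i\tau\Delta}$ contributes the growth factor $(1+C\tau)^k$ from Proposition~\ref{prop:stability_free_schroedinger}, while the nonlinear phase factor $e^{-i\tau\mu|\psi^n|^2}$ is handled using the algebra property and Lipschitz estimates of the pointwise multiplication operator in $\Sigma^k$. Accumulating over $j\le n$ steps with $n\tau\le T$, each propagated local error picks up a factor $(1+C\tau)^{kn}\le e^{CkT}$, which is bounded uniformly in $n$; summing $n$ local errors of size $O(\tau^2)$ yields the claimed $O(\tau)$ global bound. The constant $C$ then depends on $T$, $k$, $d$ and $\sup_{t\in[0,T]}\|\psi(t,\cdot)\|_{\Sigma^{k+2}}$, as stated.

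I expect the main obstacle to be making the stability estimate (ii) \emph{genuinely uniform} in $n$ despite the linearly-growing factor $(1+C\tau)^k$ per step from Proposition~\ref{prop:stability_free_schroedinger}. Because that bound is not an isometry but grows with $t$, one must ensure the nonlinear iterates $\psi^n$ remain in a bounded $\Sigma^k$-ball (so that the Lipschitz constant of the nonlinearity stays controlled) before one can close the accumulation; this typically requires an a~priori induction hypothesis on $\|\psi^n\|_{\Sigma^k}$ that is bootstrapped together with the error bound, choosing $\tau_0$ small enough that the error stays below the radius of the ball on which the local-error and Lipschitz constants were computed. Handling this circular dependence — error smallness needs boundedness, boundedness needs error smallness — via a discrete Gronwall / induction argument is the delicate part; once it is set up, the remaining Taylor-expansion bookkeeping for the local error is routine.
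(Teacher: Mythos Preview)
Your proposal is correct and follows essentially the same route as the paper: the paper isolates the stability estimate (your~(ii)) as Lemma~\ref{lem:stability_cubic_nlse}, proved via Proposition~\ref{prop:stability_free_schroedinger} plus a Gronwall argument on the nonlinear phase flow, and the local error estimate (your~(i)) as Lemma~\ref{lem:consistency_cubic_nlse}, proved by the Lie commutator expansion, and then combines them by Lady Windermere's fan exactly as you describe. Your identification of the bootstrap on $\|\psi^n\|_{\Sigma^k}$ as the only nontrivial point is accurate; the paper leaves this implicit in its one-line proof, but your handling of it via a discrete induction closed by choosing $\tau_0$ small is the standard and correct way to proceed.
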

Note as an immediate corollary we have a control on the $L^2$ and $H^1$ convergence properties of the semi-discrete Lie splitting.
\begin{cor}\label{cor:L2estimate_semi-discrete}Let $k_0>d/2+2, k_1>\max\{d/2+2,3\}$ be integers and $M_k:=\sup_{t\in [0,T]}\|\psi(t,\cdot)\|_{\Sigma^{k}}$. Then
\begin{align*}
        \|\psi^n-\psi(t_n,\cdot)\|_{L^2}\leq C_{k_0} \tau,\quad \text{for all\ }0\leq t_n=n\tau\leq T,\\
        \|\psi^n-\psi(t_n,\cdot)\|_{H^1}\leq C_{k_1} \tau,\quad \text{for all\ }0\leq t_n=n\tau\leq T,
\end{align*}
where $C_k>0$ ($k\in\{k_0,k_1\}$) are constants that depend only on $k,T,\sup_{t\in[0,T]}\|\psi(t,\cdot)\|_{\Sigma^{k}}$.
\end{cor}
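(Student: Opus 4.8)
The plan is to deduce the corollary directly from Theorem~\ref{thm:semi-discrete_error_analysis_cubicNLSE} by applying it at a \emph{shifted} regularity index and then invoking the elementary embedding of $\Sigma^k$ into lower-order Sobolev spaces. The starting observation is that, since $\|f\|_{\Sigma^k}=\|f\|_{H^k}+\||x|^kf\|_{L^2}\ge\|f\|_{H^k}$ and $\|f\|_{H^j}\le\|f\|_{H^k}$ whenever $0\le j\le k$, one has, for any integer $k\ge 0$,
\[
\|f\|_{L^2}\le\|f\|_{H^{k}}\le\|f\|_{\Sigma^{k}},\qquad \|f\|_{H^1}\le\|f\|_{H^{k}}\le\|f\|_{\Sigma^{k}}\ \ (k\ge 1).
\]
Thus it suffices to bound a suitably chosen $\Sigma^{k}$-norm of the error $\psi^n-\psi(t_n,\cdot)$ and then drop down to $L^2$ or $H^1$.

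For the $L^2$ estimate I would apply Theorem~\ref{thm:semi-discrete_error_analysis_cubicNLSE} with the index $k=k_0-2$. This is admissible because the hypothesis $k_0>d/2+2$ guarantees $k_0-2>d/2$ (and $k_0-2$ is then a positive integer), so the theorem applies and yields $\|\psi^n-\psi(t_n,\cdot)\|_{\Sigma^{k_0-2}}\le C\tau$ for $0<\tau<\tau_0$. Crucially, the constant produced by the theorem depends on $\sup_{t\in[0,T]}\|\psi(t,\cdot)\|_{\Sigma^{(k_0-2)+2}}=\sup_{t\in[0,T]}\|\psi(t,\cdot)\|_{\Sigma^{k_0}}$, exactly as the corollary demands. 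The claimed bound then follows from $\|\psi^n-\psi(t_n,\cdot)\|_{L^2}\le\|\psi^n-\psi(t_n,\cdot)\|_{\Sigma^{k_0-2}}$, valid since $k_0-2\ge 1\ge 0$.

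The $H^1$ estimate is obtained identically, now applying the theorem with $k=k_1-2$. The hypothesis $k_1>\max\{d/2+2,3\}$ serves two purposes at once: the part $k_1>d/2+2$ ensures $k_1-2>d/2$ so that the theorem applies, while the part $k_1>3$ ensures $k_1-2\ge 1$, which is precisely what the embedding $\|\cdot\|_{H^1}\le\|\cdot\|_{\Sigma^{k_1-2}}$ requires. Again the theorem's constant depends only on $\sup_{t\in[0,T]}\|\psi(t,\cdot)\|_{\Sigma^{k_1}}$, and one simply takes $\tau$ below the minimum of the two thresholds $\tau_0$ arising from the two applications.

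There is essentially no analytic obstacle in this corollary: the entire content is bookkeeping of the index shift. The only point requiring a moment's care is to invoke Theorem~\ref{thm:semi-discrete_error_analysis_cubicNLSE} at index $k-2$ rather than $k$, which is exactly what converts the theorem's $\Sigma^{k+2}$-dependence of the constant into the $\Sigma^{k}$-dependence asserted in the corollary; the stated lower bounds on $k_0$ and $k_1$ are precisely the conditions that render this shift admissible while still controlling $L^2$ and $H^1$, respectively.
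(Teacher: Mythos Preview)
Your proposal is correct and follows the same approach as the paper: both rely on the trivial embedding $\|\cdot\|_{H^j}\le\|\cdot\|_{\Sigma^k}$ for $j\le k$ coming from the definition $\|u\|_{\Sigma^k}=\|u\|_{H^k}+\||x|^ku\|_{L^2}$. Your write-up is in fact more explicit than the paper's one-line proof, spelling out the index shift $k\mapsto k-2$ that converts the theorem's $\Sigma^{k+2}$-dependence of the constant into the corollary's $\Sigma^{k}$-dependence and verifying that the stated lower bounds on $k_0,k_1$ are exactly what make this shift admissible.
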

\begin{proof}
    This is an immediate consequence of Theorem~\ref{thm:semi-discrete_error_analysis_cubicNLSE}, noting that $\|u\|_{\Sigma^k}=\|x^ku\|_{L^2}+\|u\|_{H^k}$.
\end{proof}
The proof of Theorem~\ref{thm:semi-discrete_error_analysis_cubicNLSE} relies on two standard steps (consistency and stability) as per the following two lemmas.
\begin{lemma}[Stability]\label{lem:stability_cubic_nlse} Let $k>d/2$ be an integer and $w,v\in \Sigma^k$. Then there is a constant $C>0$ depending only on $\|w\|_{\Sigma^k},\|v\|_{\Sigma^k}, d, k$ such that, for all $\tau>0$,
\begin{align*}
    \left\|e^{i\tau\Delta}e^{-i\tau\mu|w^n|^2}w^n-e^{i\tau\Delta}e^{-i\tau\mu|v^n|^2}v^n\right\|_{\Sigma^k}\leq e^{C\tau}\|w-v\|_{\Sigma^k}.
\end{align*}
\end{lemma}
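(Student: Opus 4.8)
The plan is to reduce the estimate to a Lipschitz bound on the purely nonlinear half-step and then to control the resulting phase factors by a composition (Moser-type) estimate in $\Sigma^k$.

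First I would dispose of the outer propagator. Writing $\Phi_\tau(u):=e^{-i\tau\mu|u|^2}u$ for the nonlinear flow, Proposition~\ref{prop:stability_free_schroedinger} gives $\|e^{i\tau\Delta}f\|_{\Sigma^k}\le(1+C\tau)^k\|f\|_{\Sigma^k}$, so it suffices to prove $\|\Phi_\tau(w)-\Phi_\tau(v)\|_{\Sigma^k}\le P(\tau)\|w-v\|_{\Sigma^k}$ for some polynomial $P$ with $P(0)=1$ and nonnegative coefficients depending only on $\|w\|_{\Sigma^k},\|v\|_{\Sigma^k},d,k$. The product $(1+C\tau)^kP(\tau)$ is then again a polynomial of the same type, and any such polynomial is bounded by $e^{C'\tau}$ for a suitable $C'$: its logarithm divided by $\tau$ is continuous on $(0,\infty)$ with finite limits at $0$ and $\infty$, hence bounded.

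Next I would split the nonlinear difference in the standard telescoping way,
$$\Phi_\tau(w)-\Phi_\tau(v)=e^{-i\tau\mu|w|^2}(w-v)+\big(e^{-i\tau\mu|w|^2}-e^{-i\tau\mu|v|^2}\big)v,$$
and use the fundamental theorem of calculus on the phase, $e^{-i\tau\mu|w|^2}-e^{-i\tau\mu|v|^2}=-i\tau\mu(|w|^2-|v|^2)\int_0^1 e^{-i\tau\mu(s|w|^2+(1-s)|v|^2)}\,\dd s$, together with the elementary identity $|w|^2-|v|^2=\bar w(w-v)+\overline{(w-v)}\,v$. Since $k>d/2$, the space $\Sigma^k$ is a Banach algebra and embeds into $L^\infty$ (Appendix~\ref{app:properties_Sigma_k}), so each of these products is controlled by $\|w-v\|_{\Sigma^k}$ times powers of $\|w\|_{\Sigma^k}$ and $\|v\|_{\Sigma^k}$ — provided I can handle the phase factors, which are bounded but do not decay and hence do not themselves lie in $\Sigma^k$.

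This last point is the crux and the main obstacle. The remedy is to subtract the constant $1$: for real $u\in\Sigma^k$ and the smooth function $F(z)=e^{-i\tau\mu z}-1$ with $F(0)=0$, I claim $F(u)\in\Sigma^k$ with $\|F(u)\|_{\Sigma^k}\le \tau|\mu|\,Q(\tau)\,C(\|u\|_{\Sigma^k})$ for a polynomial $Q$. The weighted-$L^2$ part is immediate from the pointwise bound $|F(u)|\le\tau|\mu|\,|u|$, giving $\||x|^kF(u)\|_{L^2}\le\tau|\mu|\,\|u\|_{\Sigma^k}$; the $H^k$ part follows from a Fa\`a di Bruno expansion of $\partial^\alpha F(u)$ for $|\alpha|\le k$, combined with the derivative bounds $|F^{(j)}(z)|\le(\tau|\mu|)^j$ and Gagliardo--Nirenberg interpolation — this is exactly the classical Moser composition estimate and yields polynomial dependence on $\tau$ with an overall factor $\tau$ (coming from $F$ vanishing to first order at $0$). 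Applying this with $u=|w|^2=w\bar w\in\Sigma^k$ (and likewise with the convex combination $s|w|^2+(1-s)|v|^2$) lets me write $e^{-i\tau\mu|w|^2}=1+g_w$ with $g_w\in\Sigma^k$, reduce every product above to products of genuine $\Sigma^k$ functions, and invoke the algebra property. Collecting the two terms then produces the desired polynomial bound $\|\Phi_\tau(w)-\Phi_\tau(v)\|_{\Sigma^k}\le P(\tau)\|w-v\|_{\Sigma^k}$, which together with the first paragraph completes the proof. The only genuinely delicate step is the composition estimate, where one must exploit $F(0)=0$ and the decay of $|w|^2$ precisely because the phase itself does not belong to $\Sigma^k$.
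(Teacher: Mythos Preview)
Your argument is correct, but it takes a different and somewhat heavier route than the paper. After peeling off $e^{i\tau\Delta}$ via Proposition~\ref{prop:stability_free_schroedinger} (which you do identically), the paper observes that $\theta(t):=e^{-it\mu|w|^2}w$ and $\eta(t):=e^{-it\mu|v|^2}v$ are the exact solutions of the linear ODEs
\[
i\partial_t\theta=\mu|w|^2\theta,\quad \theta(0)=w,\qquad
i\partial_t\eta=\mu|v|^2\eta,\quad \eta(0)=v,
\]
and then applies Gronwall in $\Sigma^k$ to the difference, using only the algebra property $\||w|^2\theta\|_{\Sigma^k}\lesssim\|w\|_{\Sigma^k}^2\|\theta\|_{\Sigma^k}$ (following Lemma~2.2 in \cite{Gauckler2011}). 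This completely sidesteps the issue you identify as ``the crux'': because one never needs to estimate the phase $e^{-i\tau\mu|w|^2}$ itself in $\Sigma^k$, only the product $|w|^2\theta$, there is no need for a Moser/Fa\`a di Bruno composition estimate or for the trick of subtracting $1$ from the phase.

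Your telescoping-plus-composition approach is perfectly valid and gives the same bound, at the price of a more technical detour (Fa\`a di Bruno, Gagliardo--Nirenberg, the $F(0)=0$ observation). What you gain is an explicit polynomial-in-$\tau$ structure of the constant, which can be useful elsewhere; what the paper's ODE/Gronwall argument buys is brevity and the avoidance of any composition machinery.
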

\begin{proof}
    Using Proposition~\ref{prop:stability_free_schroedinger}, there is a constant $C_1>0$ depending only on $k,d$ such that 
    \begin{align*}
        \left\|e^{i\tau\Delta}e^{-i\tau\mu|w^n|^2}w^n-e^{i\tau\Delta}e^{-i\tau\mu|v^n|^2}v^n\right\|_{\Sigma^k}\leq e^{C_1\tau}\left\|e^{-i\tau\mu|w^n|^2}w^n-e^{-i\tau\mu|v^n|^2}v^n\right\|_{\Sigma^k}.
    \end{align*}
    And the desired stability estimate follows then analogously to the proof of Lemma 2.2 in \cite{Gauckler2011} using Gronwall's lemma on 
    \begin{align*}
        i\frac{\partial}{\partial t}\theta&=\mu|w|^2\theta,\quad\theta(0)=w,\quad\text{and}\quad
        i\frac{\partial}{\partial t}\eta=\mu|v|^2\eta,\quad\eta(0)=v.
    \end{align*}
\end{proof}
\begin{lemma}[Local error - consistency]\label{lem:consistency_cubic_nlse} Let $k>d/2$ be an integer and denote by $\psi(t,x)$ the exact solution of \eqref{eqn:cubic_NLSE}. Then there is a constant $C>0$ depending only on $\sup_{t\in[0,\tau]}\|\psi(t,\cdot)\|_{\Sigma^{k+2}}$ such that
\begin{align*}
    \|e^{i\tau\Delta}e^{-i\tau\mu|\psi^n|^2}\psi^n-\psi(\tau)\|_{\Sigma^{k}}\leq C \tau^2.
\end{align*}
\end{lemma}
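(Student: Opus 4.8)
The plan is to carry out the standard consistency analysis for Lie splitting via Duhamel's formula, comparing the one-step numerical map against the exact flow over a single step. Writing $\psi^n=\psi(0)$ for the state at the start of the step, Duhamel's principle gives for the exact solution of \eqref{eqn:cubic_NLSE}
\begin{align*}
\psi(\tau)=e^{i\tau\Delta}\psi^n-i\mu\int_0^\tau e^{i(\tau-s)\Delta}\bigl(|\psi(s)|^2\psi(s)\bigr)\,\dd s.
\end{align*}
For the numerical map I would first observe that the nonlinear subflow $i\partial_t\theta=\mu|\theta|^2\theta$ preserves $|\theta|$ pointwise, since $\partial_t|\theta|^2=2\Re(\bar\theta\,\partial_t\theta)=0$; hence its exact solution is precisely the phase multiplication $\theta(\tau)=e^{-i\tau\mu|\psi^n|^2}\psi^n$ appearing in \eqref{eqn:Lie_splitting}, so the nonlinear half-step is solved exactly and the entire local error stems from the non-commutativity of the two subflows.

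Next I would expand the phase to first order,
\begin{align*}
e^{-i\tau\mu|\psi^n|^2}\psi^n=\psi^n-i\tau\mu|\psi^n|^2\psi^n+R_\tau,\qquad R_\tau:=-i\mu\int_0^\tau|\psi^n|^2\bigl(\theta(s)-\psi^n\bigr)\,\dd s,
\end{align*}
so that, after applying $e^{i\tau\Delta}$, the numerical step reads $e^{i\tau\Delta}\psi^n-i\tau\mu\,e^{i\tau\Delta}(|\psi^n|^2\psi^n)+e^{i\tau\Delta}R_\tau$. Subtracting this from $\psi(\tau)$, the two leading free-flow terms cancel and one is left with
\begin{align*}
\psi(\tau)-e^{i\tau\Delta}e^{-i\tau\mu|\psi^n|^2}\psi^n=-i\mu\int_0^\tau\bigl[G(s)-G(0)\bigr]\,\dd s-e^{i\tau\Delta}R_\tau,\qquad G(s):=e^{i(\tau-s)\Delta}\bigl(|\psi(s)|^2\psi(s)\bigr).
\end{align*}
Since the bracket vanishes at $s=0$, I would estimate $\|G(s)-G(0)\|_{\Sigma^k}\le s\,\sup_{[0,\tau]}\|G'\|_{\Sigma^k}$ via the fundamental theorem of calculus, which turns the integral into an $O(\tau^2)$ contribution once $\|G'\|_{\Sigma^k}$ is bounded; the term $R_\tau$ is manifestly a second Taylor remainder and is likewise $O(\tau^2)$ in $\Sigma^k$.

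It remains to bound the two $\Sigma^k$-norms. Differentiating gives $G'(s)=e^{i(\tau-s)\Delta}\bigl(-i\Delta(|\psi|^2\psi)+\partial_s(|\psi|^2\psi)\bigr)$, and using the equation $\partial_s\psi=i\Delta\psi-i\mu|\psi|^2\psi$ the factor $\partial_s(|\psi|^2\psi)$ becomes a polynomial expression in $\psi$ involving at most $\Delta\psi$. I would strip off the free propagator using Proposition~\ref{prop:stability_free_schroedinger} (for $0\le\tau-s\le\tau\le\tau_0$ the growth factor $(1+C\tau)^k$ is bounded by a constant), and then control the remaining nonlinear expressions with the Banach-algebra property of $\Sigma^k$ for $k>d/2$ (Appendix~\ref{app:properties_Sigma_k}) together with the boundedness of $\Delta:\Sigma^{k+2}\to\Sigma^k$, yielding $\|G'\|_{\Sigma^k}\lesssim\|\psi\|_{\Sigma^{k+2}}^3+\|\psi\|_{\Sigma^{k+2}}^5$ uniformly on $[0,\tau]$; the remainder $R_\tau$ is treated the same way, using that the nonlinear subflow is bounded on $\Sigma^k$ (as in Lemma~\ref{lem:stability_cubic_nlse}). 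The main obstacle is precisely the bookkeeping of the two derivatives lost to the Laplacian acting on the cubic nonlinearity—both directly in $-i\Delta(|\psi|^2\psi)$ and, through the equation, in $\partial_s\psi$—which is exactly what forces the hypothesis to involve $\sup_{t\in[0,\tau]}\|\psi(t,\cdot)\|_{\Sigma^{k+2}}$; with the algebra property and Proposition~\ref{prop:stability_free_schroedinger} in hand, closing the estimate is then routine.
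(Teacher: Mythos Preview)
Your proposal is correct. The paper's own proof merely cites \cite[Section~4.4]{lubich2008splitting} and says the error is expressed via the Lie commutator $[\hat T,\hat V]$; your Duhamel-based decomposition is precisely the variation-of-constants argument underlying that framework, and the bracket $-i\Delta(|\psi|^2\psi)+\partial_s(|\psi|^2\psi)$ appearing in your $G'(s)$ is exactly (up to the cubic self-interaction term) the commutator $\hat T'\hat V-\hat V'\hat T$ evaluated along the exact flow. The two key structural ingredients you invoke---stability of $e^{it\Delta}$ on $\Sigma^k$ from Proposition~\ref{prop:stability_free_schroedinger} and the algebra property of $\Sigma^k$ for $k>d/2$---are precisely those that allow the Lubich argument to transfer from $H^k$ to $\Sigma^k$, and your identification of the $\Sigma^{k+2}$ requirement with the Laplacian acting once on the cubic product is exactly the mechanism behind the loss of two derivatives in the commutator bound. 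One small point worth making explicit in a clean write-up is the estimate $\|\Delta f\|_{\Sigma^k}\lesssim\|f\|_{\Sigma^{k+2}}$, which is not entirely immediate from the definition (since $\Delta$ is not diagonal in the Hermite basis) but follows readily from writing $\Delta=|x|^2-H$ with $H=-\Delta+|x|^2$ and using the equivalence \eqref{eqn:details_sigma_norm_in_hermite_basis}.
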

\begin{proof}
    The result can be proven using analogous arguments as in the Fourier spectral case \cite[Section 4.4]{lubich2008splitting}, by expressing the error in terms of the Lie commutator
    \begin{align*}
        [\hat{T},\hat{V}](\psi)=\hat{T}'(\psi)\hat{V}(\psi)-\hat{V}'(\psi)\hat{T}(\psi),
    \end{align*}
    where $\hat{T}(\psi)=i\Delta\psi$, $\hat{V}(\psi)=-i\mu|\psi|^2\psi$.
\end{proof}

\begin{proof}[Proof of Theorem~\ref{thm:semi-discrete_error_analysis_cubicNLSE}]
The global convergence result follows thus by a simple Lady-Winderemere's fan argument combining Lemma~\ref{lem:stability_cubic_nlse} and Lemma~\ref{lem:consistency_cubic_nlse}.
\end{proof}

\subsection{Fully-discrete convergence analysis}
The fully discrete scheme is\vspace{-0.4cm}
\begin{align}\begin{split}\label{eqn:Lie_splitting_fully_discrete}
	\psi^{n+1}_M&=e^{i\tau\Delta}\mathcal{Q}_M\!\left(e^{-i\tau\mu|\psi_M^n|^2}\psi_M^n\right),\\
    \psi^{0}_M&=\mathcal{Q}_M(\psi_0).\end{split}
\end{align}
Based on the spatial discretisation estimates in \S\ref{sec:spatial_truncation_error} and the semi-discrete analysis in \S\ref{sec:semi-discrete_temporal_analysis_cubic_NLSE} we can establish the following global error estimate.
\begin{theorem}\label{thm:fully_discrete_error_analysis_cubicNLSE}
Let $k>\lceil\frac{d+1}{2}\rceil+2+\frac{2d}{3}$ be an integer. Denote by
$M_{k+2}:=\sup_{t\in[0,T]}\|\psi(t)\|_{\Sigma^{k+2}}$. Let $\psi_M^n$ be generated by the fully discrete scheme \eqref{eqn:Lie_splitting_fully_discrete} with step size $\tau>0$,
then there exist $\tau_0>0,M_0>0$ and $C>0$  depending only on $d,k,s,T$ and $M_{k+2}$ such that for all $0 <\tau < \tau_0, M_0\leq M$ and $0\le t_n\le T$
\begin{equation}\label{eq:full_error_L2}
\|\psi_M^n-\psi(t_n)\|_{L^2}
\;\le\;
C\Bigl(\tau + M^{1+\frac d3-\frac{k}{2}}\Bigr), \quad \text{for\ }0\leq t_n=nh\leq T.
\end{equation}
\end{theorem}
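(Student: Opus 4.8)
The plan is to separate the temporal and spatial contributions by the triangle inequality
\[
\|\psi_M^n-\psi(t_n)\|_{L^2}\le \|\psi_M^n-\psi^n\|_{L^2}+\|\psi^n-\psi(t_n)\|_{L^2},
\]
where $\psi^n$ denotes the semi-discrete Lie iterate \eqref{eqn:Lie_splitting}. The second term is exactly the semi-discrete $L^2$ error, which is $O(\tau)$ by Corollary~\ref{cor:L2estimate_semi-discrete} (applicable since the hypothesis forces $k>d/2+2$). Everything then reduces to bounding the \emph{spatial} error $\|\psi_M^n-\psi^n\|_{L^2}$ produced by the interpolation operator $\mathcal{Q}_M$.

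For the spatial error I would run a discrete Gronwall / Lady Windermere's fan argument at the level of $L^2=\Sigma^0$. Writing the one-step maps as $S_\tau(u)=e^{i\tau\Delta}e^{-i\tau\mu|u|^2}u$ and $S_{\tau,M}(u)=e^{i\tau\Delta}\mathcal{Q}_M(e^{-i\tau\mu|u|^2}u)$, one has
\[
\psi_M^{n+1}-\psi^{n+1}=\bigl[S_{\tau,M}(\psi_M^n)-S_{\tau,M}(\psi^n)\bigr]+e^{i\tau\Delta}(\mathcal{Q}_M-I)\bigl(e^{-i\tau\mu|\psi^n|^2}\psi^n\bigr).
\]
The first bracket is the stability term: crucially, in $L^2$ both $e^{i\tau\Delta}$ and the pointwise phase $e^{-i\tau\mu|\cdot|^2}$ are isometries, and Lemma~\ref{lem:product_interpolation_product_estimate} controls $\mathcal{Q}_M$ applied to the cubic factor, so that $S_{\tau,M}$ is Lipschitz with constant $1+O(\tau)$ on a bounded $\Sigma^k$-ball; a short bootstrap keeps $\psi_M^n$ in such a ball. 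The second term is the local spatial defect, whose $L^2$-size is governed by Lemma~\ref{lem:Hermite_interpolation}. Since $e^{i\tau\Delta}$ is an $L^2$-isometry, the propagation of these defects through the remaining steps costs only the uniform factor $e^{CT}$, and the initial defect $\|(\mathcal{Q}_M-I)\psi_0\|_{L^2}$ is of the same type.

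The main obstacle is precisely the \emph{accumulation} of these local defects. Measured crudely in $L^2$, a single defect $e^{i\tau\Delta}(\mathcal{Q}_M-I)(\cdots)$ is of size $O(M^{d/3-k/2})$ with no gain in $\tau$, so summing it over the $O(T/\tau)$ time steps would introduce a spurious factor $\tau^{-1}$. The resolution -- and the reason the exact free flow can be used at all -- is that $e^{i\tau\Delta}$ leaks out of $\mathcal{S}_M$ only at order $O(\tau\Delta)$: expanding $e^{i\tau\Delta}=I+i\tau\Delta+\dots$, the $O(\tau^0)$ part of the defect telescopes against consecutive steps and the genuine driver is the $O(\tau)$ contribution, essentially the interpolation error of $\Delta$ applied to the (smooth-in-time) semi-discrete solution. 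The factor $\tau$ then cancels the step count $T/\tau$, and one is left with the interpolation error of a function carrying two extra derivatives. Quantifying this with Lemma~\ref{lem:Hermite_interpolation} costs exactly two orders of regularity, which both upgrades the exponent from $d/3-k/2$ to $1+d/3-k/2$ and forces the $\Sigma^{k+2}$-bound $M_{k+2}$ (supplied by the temporal regularity of the semi-discrete flow, itself controlled through Proposition~\ref{prop:stability_free_schroedinger}).

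Collecting the two contributions yields $\|\psi_M^n-\psi(t_n)\|_{L^2}\le C(\tau+M^{1+d/3-k/2})$. The regularity threshold $k>\lceil\tfrac{d+1}{2}\rceil+2+\tfrac{2d}{3}$ is exactly what makes every step above close simultaneously: the part $\lceil\tfrac{d+1}{2}\rceil>d/2$ gives the Sobolev embedding and algebra property, and the control of $\sup_m|u(x_m)|$ needed for Lemma~\ref{lem:product_interpolation_product_estimate} and the cubic stability estimate; the $+2$ accounts for the two derivatives of $\Delta$ and the $\Sigma^{k+2}$ consistency input; and the $\tfrac{2d}{3}$ together with the $+2$ guarantees that the spatial exponent $1+d/3-k/2$ is negative, so that the method indeed converges as $M\to\infty$.
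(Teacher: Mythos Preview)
Your overall architecture --- split off the semi-discrete temporal error via Corollary~\ref{cor:L2estimate_semi-discrete} and run a Lady Windermere argument for the spatial part --- is right, and you have correctly put your finger on the crux: the naive defect $e^{i\tau\Delta}(\mathcal Q_M-I)\bigl(e^{-i\tau\mu|\psi^n|^2}\psi^n\bigr)$ carries no factor of $\tau$, so summing $O(T/\tau)$ copies is fatal.

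The gap is in your resolution of this difficulty. The assertion that ``the $O(\tau^0)$ part of the defect telescopes against consecutive steps'' is not correct: the accumulated leading contribution is essentially $\sum_{j=0}^{n-1}(\mathcal Q_M-I)\psi^j$, and these summands are not successive differences of anything --- expanding $e^{i\tau\Delta}=I+i\tau\Delta+\cdots$ does not manufacture a telescoping structure. There is a related issue in your stability step: since $\psi^n\notin\mathcal S_M$, Lemma~\ref{lem:product_interpolation_product_estimate} applied to $S_{\tau,M}(\psi_M^n)-S_{\tau,M}(\psi^n)$ bounds the difference by $(1+C\tau)\|\mathcal Q_M(\psi_M^n-\psi^n)\|_{L^2}=(1+C\tau)\|\psi_M^n-\mathcal Q_M\psi^n\|_{L^2}$, not by $(1+C\tau)\|\psi_M^n-\psi^n\|_{L^2}$. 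So the quantity that propagates cleanly through the stability argument is $\psi_M^n-\mathcal Q_M\psi^n$, not your $e_n$.

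This is exactly how the paper proceeds: it compares $\psi_M^n$ to $\mathcal Q_M\psi^n$ rather than to $\psi^n$, splitting the error into $\mathrm{(I)}=\|\psi_M^n-\mathcal Q_M\psi^n\|_{L^2}$, $\mathrm{(II)}=\|\mathcal Q_M\psi^n-\psi^n\|_{L^2}$, $\mathrm{(III)}=\|\psi^n-\psi(t_n)\|_{L^2}$. With this choice both endpoints of the stability step lie in $\mathcal S_M$ (Lemma~\ref{lem:stability_fully_discrete_NLSE}), and the local defect becomes $\|\Phi_{\tau,M}(\mathcal Q_M\psi^n)-\mathcal Q_M\Phi_\tau(\psi^n)\|_{L^2}$. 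The key Lemma~\ref{lem:local_error_fully_discrete_NLSE} shows this is $O(\tau\,M^{1+d/3-k/2})$: the mechanism is that $\mathcal Q_M(\mathcal Q_M\phi-\phi)=0$ by the projection property, so the $O(\tau^0)$ part of $F_\tau(\mathcal Q_M\phi)-F_\tau(\phi)$ is annihilated and only the $O(\tau)$ nonlinear remainder $(e^{-i\tau\mu|\cdot|^2}-1)(\cdot)$ survives. The factor $\tau$ thus comes from the projection identity combined with the structure of the nonlinearity, not from Taylor-expanding the free flow. The paper then closes with a bootstrap on $\sup_n\|\psi_M^n\|_{\Sigma^{k'}}$ using the inverse estimate $\|f\|_{\Sigma^{k'}}\le CM^{k'/2}\|f\|_{L^2}$ on $\mathcal S_M$, which is what your ``short bootstrap'' would have to make precise.
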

Let us define the one-step maps
\[
\Phi_\tau(u):=e^{i\tau\Delta}\bigl(e^{-i\tau\mu|u|^2}u\bigr),
\qquad
\Phi_{\tau,M}(u):=e^{i\tau\Delta}\,\mathcal Q_M\!\bigl(e^{-i\tau\mu|u|^2}u\bigr).
\]
For the proof we will use the following two lemmas.
\begin{lemma}[Stability of fully discrete scheme]\label{lem:stability_fully_discrete_NLSE} Let $k>d/2$ be an integer, then there is a constant $C>0$ such that for all $u,v\in\mathcal{S}_M$ we have
\begin{align*}
    \|\Phi_{\tau,M}(u)-\Phi_{\tau,M}(v)\|_{L^2}\leq e^{C(\|u\|_{\Sigma^k}^2+\|v\|_{\Sigma^k}^2)\tau}\|u-v\|_{L^2}.
\end{align*}
\end{lemma}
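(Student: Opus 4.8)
The plan is to exploit the fact that the free propagator $e^{i\tau\Delta}$ is an isometry on $L^2$ (the mass-conservation property already used in Proposition~\ref{prop:stability_free_schroedinger}, here only at the $k=0$ level), so that the dispersive substep drops out entirely, and to reduce the claim to a pointwise Lipschitz bound for the nonlinear substep combined with the product interpolation estimate of Lemma~\ref{lem:product_interpolation_product_estimate}. First I would use the $L^2$-isometry of $e^{i\tau\Delta}$ together with the linearity of $\mathcal{Q}_M$ to write
\[
\|\Phi_{\tau,M}(u)-\Phi_{\tau,M}(v)\|_{L^2}
=\bigl\|\mathcal{Q}_M\bigl(e^{-i\tau\mu|u|^2}u-e^{-i\tau\mu|v|^2}v\bigr)\bigr\|_{L^2},
\]
so that only the interpolated nonlinear difference remains to be controlled.

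Next I would establish a pointwise Lipschitz estimate for the nonlinear map $f_\tau(w):=e^{-i\tau\mu|w|^2}w$. Setting $a=u(x)$, $b=v(x)$ and splitting $f_\tau(a)-f_\tau(b)=e^{-i\tau\mu|a|^2}(a-b)+\bigl(e^{-i\tau\mu|a|^2}-e^{-i\tau\mu|b|^2}\bigr)b$, the first term has modulus $|a-b|$, while for the second I would use $|e^{i\theta_1}-e^{i\theta_2}|\le|\theta_1-\theta_2|$ together with $\bigl||a|^2-|b|^2\bigr|\le(|a|+|b|)\,|a-b|$, and then absorb the resulting factor $1+C|\mu|(|a|^2+|b|^2)\tau$ into an exponential via $1+s\le e^{s}$, to obtain the pointwise bound
\[
|f_\tau(u)(x)-f_\tau(v)(x)|\le e^{C|\mu|(|u(x)|^2+|v(x)|^2)\tau}\,|u(x)-v(x)|.
\]
(Equivalently this follows from Gronwall applied to $i\partial_t\theta=\mu|\theta|^2\theta$, whose flow preserves $|\theta|$ pointwise.) Consequently I can write $f_\tau(u)-f_\tau(v)=g\,(u-v)$ with $|g(x_m)|\le e^{C|\mu|(|u(x_m)|^2+|v(x_m)|^2)\tau}$ at each quadrature node $x_m$.

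Then I would apply the product interpolation estimate of Lemma~\ref{lem:product_interpolation_product_estimate}, whose proof rests on the exactness of Gauss--Hermite quadrature, i.e. on the identity $\|\mathcal{Q}_M(w)\|_{L^2}^2=\sum_m\omega_m|w(x_m)|^2$ with positive weights $\omega_m$, to obtain
\[
\bigl\|\mathcal{Q}_M\bigl(g\,(u-v)\bigr)\bigr\|_{L^2}\le \sup_{m}|g(x_m)|\;\bigl\|\mathcal{Q}_M(u-v)\bigr\|_{L^2}.
\]
Since $u,v\in\mathcal{S}_M$ we have $u-v\in\mathcal{S}_M$, hence $\mathcal{Q}_M(u-v)=u-v$ and $\|\mathcal{Q}_M(u-v)\|_{L^2}=\|u-v\|_{L^2}$. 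Finally I would control the supremum using the Sobolev embedding $\Sigma^k\hookrightarrow L^\infty$, valid precisely because $k>d/2$, giving $\sup_m|g(x_m)|\le e^{C|\mu|(\|u\|_{L^\infty}^2+\|v\|_{L^\infty}^2)\tau}\le e^{C'(\|u\|_{\Sigma^k}^2+\|v\|_{\Sigma^k}^2)\tau}$; combining the three displays yields the claim.

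I expect the main obstacle to lie in the treatment of the interpolation operator rather than in the nonlinear estimate. The subtlety is that $\mathcal{Q}_M$ does not commute with multiplication (aliasing), so one cannot naively factor the pointwise bound through $\mathcal{Q}_M$; the crucial device is that on $\mathcal{S}_M$ the $L^2$-norm coincides with the discrete quadrature norm, which is exactly the mechanism underlying Lemma~\ref{lem:product_interpolation_product_estimate} and which allows the pointwise factor $g$ to be pulled out as a supremum over the nodes. A minor point to handle carefully is that the estimate must close in $L^2$ on both sides, so the regularity of the auxiliary factor $g$ never enters---only its nodal values do---which is why working through the discrete norm, rather than attempting to place $g$ in $\Sigma^k$, is the clean route.
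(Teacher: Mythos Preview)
Your proposal is correct and follows essentially the same route as the paper: reduce to the interpolated nonlinear difference, use the pointwise Lipschitz bound to write $f_\tau(u)-f_\tau(v)=g\,(u-v)$ with $|g|\le 1+C\tau(|u|^2+|v|^2)$, apply Lemma~\ref{lem:product_interpolation_product_estimate} together with $\mathcal{Q}_M(u-v)=u-v$ on $\mathcal{S}_M$, and finish with the Sobolev embedding $\Sigma^k\hookrightarrow L^\infty$. Your first step is in fact slightly cleaner than the paper's: you invoke the exact $L^2$-isometry of $e^{i\tau\Delta}$ (the $k=0$ case of Proposition~\ref{prop:stability_free_schroedinger}), whereas the paper's proof passes through a $\Sigma^k$ estimate with an $e^{\tau C_0}$ factor before dropping to $L^2$, which is unnecessary for the stated $L^2$ bound.
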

\begin{proof}
    By Proposition~\ref{prop:stability_free_schroedinger},
\[
\|\Phi_{\tau,M}(u)-\Phi_{\tau,M}(v)\|_{\Sigma^k}
\leq 
e^{\tau C_0}\left\|\mathcal Q_M\!\Bigl(e^{-i\tau\mu|u|^2}u-e^{-i\tau\mu|v|^2}v\Bigr)\right\|_{\Sigma^k}.
\]
Pointwise, the map $F(w):=e^{-i\tau\mu|w|^2}w$ is Lipschitz: for $\tau_0$ sufficiently small we have for any $0<\tau<\tau_0$
\begin{align}\label{eqn:pointwise_lipschitz_bound}
|F(u)-F(v)|
\le \bigl(1+C\tau(|u|^2+|v|^2)\bigr)\,|u-v|
\end{align}
for a constant $C$ depending only on $|\mu|$. Hence
\[
F(u)-F(v)=a_{u,v}\,(u-v),
\qquad
|a_{u,v}(x)|\le 1+C\tau\bigl(|u(x)|^2+|v(x)|^2\bigr).
\]
Using Lemma~\ref{lem:product_interpolation_product_estimate} on
$a_{u,v}(u-v)$, and noting that $\mathcal Q_M(u-v)=u-v$ on the Hermite space $\mathcal{S}_M$, we obtain
\[
\|\Phi_{\tau,M}(u)-\Phi_{\tau,M}(v)\|_{L^2}
\le
\sup_{0\le m\le M-1}|a_{u,v}(x_m)|\,\|u-v\|_{L^2}.
\]
Since $k>\frac d2$ implies $\Sigma^k\hookrightarrow L^\infty$ (standard Sobolev embedding noting that $\Sigma^k\hookrightarrow H^k$),
we have $\sup_m|u(x_m)|\le \|u\|_{L^\infty}\lesssim \|u\|_{\Sigma^k}$ and similarly for $v$.
\end{proof}
\begin{lemma}[Local error of the fully discrete scheme]\label{lem:local_error_fully_discrete_NLSE}
Let $k>d/2$ be an integer, and let $k'=\lceil\frac{d+1}{2}\rceil$ be the smallest integer great than $d/2$. If $\phi\in \Sigma^k$ then
\begin{align*}
    \|\Phi_{\tau,M}(\mathcal{Q}_{M}(\phi))-\mathcal{Q}_{M}(\Phi_{\tau}(\phi))\|_{L^2}\leq C\tau M^{1+\frac{d}{3}-\frac{k}{2}}\left(e^{C\|\mathcal{Q}_{M}(\phi)\|_{\Sigma^{k'}}\|\phi\|_{\Sigma^{k'}}\tau}\|\phi\|_{\Sigma^{k}}+\|\Phi_{\tau}(\phi)\|_{\Sigma^k}\right),
\end{align*}
where $C$ only depends on $d,k$.
\end{lemma}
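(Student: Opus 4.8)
The plan is to reduce the local error to a single commutator between the free flow $e^{i\tau\Delta}$ and the interpolation operator $\mathcal Q_M$, and then to exhibit a genuine factor of $\tau$ in that commutator by a Duhamel representation of $e^{i\tau\Delta}-I$. First I would exploit that the potential half-step $F(w):=e^{-i\tau\mu|w|^2}w$ acts pointwise. Since $\mathcal Q_M$ is determined solely by the values at the quadrature nodes $x_m$ and $\mathcal Q_M(\phi)(x_m)=\phi(x_m)$ by definition of the interpolant, we get $F(\mathcal Q_M(\phi))(x_m)=F(\phi)(x_m)$ at every node, whence $\mathcal Q_M F(\mathcal Q_M(\phi))=\mathcal Q_M F(\phi)$. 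Writing $g:=F(\phi)$, this collapses the local error to
\[
\Phi_{\tau,M}(\mathcal Q_M(\phi))-\mathcal Q_M(\Phi_\tau(\phi))
=e^{i\tau\Delta}\mathcal Q_M g-\mathcal Q_M e^{i\tau\Delta}g
=\bigl[e^{i\tau\Delta},\mathcal Q_M\bigr]g,
\]
so that only the non-commutation of $e^{i\tau\Delta}$ with $\mathcal Q_M$ contributes, with the nonlinearity entering only through the $\Sigma^k$-size of $g$.

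Setting $R:=e^{i\tau\Delta}-I$ and using $[I,\mathcal Q_M]=0$, I would rewrite the commutator symmetrically as $\bigl[e^{i\tau\Delta},\mathcal Q_M\bigr]g=(I-\mathcal Q_M)Rg-R\,(I-\mathcal Q_M)g$ and bound the two pieces separately; the guiding principle is that each should carry one factor of $\tau$ together with one interpolation estimate measured two derivative-levels below $\Sigma^k$, which is exactly what produces the exponent $M^{1+\frac d3-\frac k2}=M^{\frac d3-\frac12(k-2)}$. For the piece $R\,(I-\mathcal Q_M)g$ I would use the elementary smoothing bound $\|Rh\|_{L^2}\le\tau\|\Delta h\|_{L^2}\le\tau\|h\|_{\Sigma^2}$, obtained from $R=\int_0^\tau i\Delta e^{is\Delta}\,ds$ and the $L^2$-isometry of $e^{is\Delta}$, with $h=(I-\mathcal Q_M)g$, followed by Lemma~\ref{lem:Hermite_interpolation} at lower index $2$. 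Combined with the algebra estimate $\|g\|_{\Sigma^k}=\|F(\phi)\|_{\Sigma^k}\le e^{C\tau\|\phi\|_{\Sigma^{k'}}^2}\|\phi\|_{\Sigma^k}$, which uses that $\Sigma^k$ is an algebra for $k'>d/2$ and is proved by Gronwall as in Lemma~\ref{lem:stability_cubic_nlse}, this yields the first term; the precise exponential factor of the stated form arises here (keeping $\mathcal Q_M(\phi)$ rather than using nodal agreement would reproduce the mixed product $\|\mathcal Q_M\phi\|_{\Sigma^{k'}}\|\phi\|_{\Sigma^{k'}}$).

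For the remaining piece $(I-\mathcal Q_M)Rg$ the naive bound loses the factor $\tau$, so here I would insert the Duhamel identity
\[
Rg=\int_0^\tau i\Delta\,e^{i(s-\tau)\Delta}\bigl(e^{i\tau\Delta}g\bigr)\,ds=\int_0^\tau i\,e^{i(s-\tau)\Delta}\Delta\,\Phi_\tau(\phi)\,ds,
\]
bring $(I-\mathcal Q_M)$ inside the integral, and apply Lemma~\ref{lem:Hermite_interpolation} to each $e^{i(s-\tau)\Delta}\Delta\Phi_\tau(\phi)\in\Sigma^{k-2}$ with output in $L^2$, giving the factor $M^{\frac d3-\frac{k-2}2}=M^{1+\frac d3-\frac k2}$. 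The $\Sigma^{k-2}$-norm is controlled uniformly in $s\in[0,\tau]$ by Proposition~\ref{prop:stability_free_schroedinger} together with the mapping property $\|\Delta u\|_{\Sigma^{k-2}}\le C\|u\|_{\Sigma^k}$, so that $\|e^{i(s-\tau)\Delta}\Delta\Phi_\tau(\phi)\|_{\Sigma^{k-2}}\le C\|\Phi_\tau(\phi)\|_{\Sigma^k}$. Integrating in $s$ contributes the remaining factor $\tau$ and produces the second term $C\tau M^{1+\frac d3-\frac k2}\|\Phi_\tau(\phi)\|_{\Sigma^k}$; summing the two pieces gives the claim.

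The main obstacle, and the crux of the argument, is precisely the extraction of the factor $\tau$ from the commutator: without it the local error would be only $O\!\bigl(M^{\frac d3-\frac k2}\bigr)$ per step and the Lady-Windermere summation over $\approx T/\tau$ steps would blow up as $\tau\to0$. The device that resolves this is the Duhamel representation of $e^{i\tau\Delta}-I$, which converts the temporal smallness into one power of the Laplacian at the cost of two spatial derivatives; this is what simultaneously furnishes the $\tau$ and degrades the interpolation exponent by exactly one power of $M$. The price is that the interpolation estimate is applied to a $\Sigma^{k-2}$ function, so the argument requires $k-2\ge d$ (and $k\ge2$); this is harmless, since the regime of Theorem~\ref{thm:fully_discrete_error_analysis_cubicNLSE} takes $k$ well above $d+2$.
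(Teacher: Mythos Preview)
Your argument is correct and complete. The paper itself does not give a proof but simply cites Proposition~6.2 in \cite{Gauckler2011}; your reconstruction captures what is almost certainly the intended argument there, as the two-term structure of the bound (one term with the exponential factor, one with $\|\Phi_\tau(\phi)\|_{\Sigma^k}$) matches exactly the two pieces of your commutator decomposition $[e^{i\tau\Delta},\mathcal Q_M]g=(I-\mathcal Q_M)Rg-R(I-\mathcal Q_M)g$.

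Two minor remarks. First, your observation that $\mathcal Q_M F(\mathcal Q_M\phi)=\mathcal Q_M F(\phi)$ via nodal agreement is a clean shortcut; the mixed exponent $\|\mathcal Q_M\phi\|_{\Sigma^{k'}}\|\phi\|_{\Sigma^{k'}}$ in the stated bound suggests Gauckler tracks $\mathcal Q_M\phi$ through the nonlinear step rather than collapsing it immediately, but as you note this is cosmetic. Second, you correctly flag that applying Lemma~\ref{lem:Hermite_interpolation} at regularity $k-2$ requires $k-2\ge d$, which is strictly stronger than the hypothesis $k>d/2$ in the lemma statement; this is indeed a harmless mismatch since Theorem~\ref{thm:fully_discrete_error_analysis_cubicNLSE} assumes $k>\lceil\tfrac{d+1}{2}\rceil+2+\tfrac{2d}{3}$, which forces $k\ge d+2$ in all dimensions.
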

\begin{proof}
    See Proposition 6.2 in \cite{Gauckler2011}.
\end{proof}
\begin{proof}[Proof of Theorem~\ref{thm:fully_discrete_error_analysis_cubicNLSE}] We have
\begin{align*}
    \|\psi_M^{n}-\psi(t_n)\|_{L^2}\leq \underbrace{\|\psi_M^n-\mathcal{Q}_{M}(\psi^n)\|_{L^2}}_{\text{(I)}}+\underbrace{\|\mathcal{Q}_{M}(\psi^n)-\psi^n\|_{L^2}}_{\text{(II)}}+\underbrace{\|\psi^n-\psi(t_n)\|_{L^2}}_{\text{(III)}}.
\end{align*}
For (III) we have the following bound from Corollary~\ref{cor:L2estimate_semi-discrete} 
\begin{align*}
    \text{(III)}=\|\psi^n-\psi(t_n)\|_{L^2}\leq C_{0}\tau,
\end{align*}
where $C_{0}$ depends on $d,k$ and $\sup_{t\in[0,T]}\|\psi(t)\|_{\Sigma^k}$.
For (II) we can use Lemma~\ref{lem:Hermite_interpolation} together with the boundedness of the semi-discrete numerical solution in $\Sigma^k$ implied by Theorem~\ref{thm:semi-discrete_error_analysis_cubicNLSE} to show that there are constants $C_1,\tau_0,M_0$ depending only on $M_{k+2},d,k,s,T$ such that
\begin{align*}
    \text{(II)}=\|\mathcal{Q}_{M}(\psi^n)-\psi^n\|_{L^2}\leq C M^{\frac{d}{3}-\frac k2}\|\psi^n\|_{\Sigma^k}\leq C_1M^{\frac{d}{3}-\frac k2}
\end{align*}
Thus it remains to control (I). For this we follow a similar Lady Windermere's fan argument as presented in the proof of Theorem 3.4 in \cite{Gauckler2011}. We note that
\begin{align*}
    \text{(I)}&=\|\mathcal{Q}_{M}(\Phi^n_{\tau}\psi_0)-\Phi_{\tau,M}^n(\mathcal{Q}_{M}(\psi_0))\|_{L^2}\\
    &\leq \left\|\mathcal{Q}_{M}\!\left(\Phi_{\tau}(\Phi^{n-1}_{\tau}\psi_0)\right)-\Phi_{\tau,M}\!\left(\mathcal{Q}_{M}(\Phi^{n-1}_{\tau}\psi_0)\right)\right\|_{L^2}+\left\|\Phi_{\tau,M}\!\left(\mathcal{Q}_{M}(\Phi^{n-1}_{\tau}\psi_0)\right)-\Phi_{\tau,M}\!\left(\Phi_{\tau,M}^{n-1}\mathcal{Q}_{M}(\psi_0)\right)\right\|_{L^2}
\end{align*}
Take $k'=\lceil\frac{d+1}{2}\rceil$ and applying Lemma~\ref{lem:stability_fully_discrete_NLSE} we find, writing 
\begin{align*}
L_n
&:= \exp\!\Big(
C\big(
\|\mathcal{Q}_M(\Phi_\tau^{n}\psi_0)\|_{\Sigma^{k'}}^2
+
\|\Phi_{\tau,M}^{n}(\mathcal{Q}_M\psi_0)\|_{\Sigma^{k'}}^2
\big)\tau
\Big),\\
\delta_n &:= \bigl\|\mathcal{Q}_M(\Phi_\tau^{n+1}\psi_0)-\Phi_{\tau,M}\bigl(\mathcal{Q}_M(\Phi_\tau^{n}\psi_0)\bigr)\bigr\|_{L^2},
\end{align*}
that
\[
\text{(I)} \le \sum_{j=0}^{n-1}
\left(\prod_{m=j+1}^{n-1} L_m\right)\,\delta_j.
\]
Combining this with Lemma~\ref{lem:local_error_fully_discrete_NLSE} we obtain, after a few steps as in \cite[p. 413]{Gauckler2011},
\begin{align*}
    \text{(I)}\leq C M^{1+\frac{d}{3}-\frac{k}{2}}\frac{e^{C\tilde{a}(n)^2n\tau}-1}{C\tilde{a}(n)^2},
\end{align*}
where $C$ depends on $d,k,t_n$ and $\sup_{t\in[0,t_n]}\|\psi(t)\|_{k+2}$, and
\begin{align*}
    \tilde{a}(n)=\max_{0\leq j\leq n-1, 0\leq i\leq n-j-1}\|\Phi_{\tau,M}^i\!\left(\mathcal{Q}_{M}(\psi^j)\right)\|_{\Sigma^{k''}}.
\end{align*}
In the above we used the additional estimate $\|\mathcal{Q}_{M}(f)\|_{\Sigma^{k'}}\leq C\|f\|_{\Sigma^{k'+2d/3}}\leq C\|f\|_{\Sigma^{k}}$ for $f\in \Sigma^{k}$ which follows from Lemma~\ref{lem:Hermite_interpolation}. It remains to control $\tilde{a}_n$. For this we note that for $f\in\mathcal{S}_M$ we have 
\begin{align*}
    \|f\|_{\Sigma^{k'}}\leq C M^{\frac{k'}{2}}\|f\|_{L^2},
\end{align*}
for a constant $C>0$ independent of $f,M$. Therefore we have
\begin{align*}
\|\Phi_{\tau,M}^n(\mathcal{Q}_{M}(\psi_0))-\mathcal{Q}_{M}(\Phi^n_{\tau}\psi_0)\|_{\Sigma^{k'}}\leq C M^{1+\frac{d}{3}+\frac{k'}{2}-\frac{k}{2}}\frac{e^{C\tilde{a}(n)^2n\tau}-1}{C\tilde{a}(n)^2}.
\end{align*}
Taking $M$ sufficiently large we can thus control $\tilde{a}(n)\leq 2C$ uniformly in $n$ and conclude that
\begin{align*}
    \text{(I)}\leq C M^{1+\frac{d}{3}-\frac{k}{2}},
\end{align*}
with a constant $C$ that depends on $\sup_{t\in[0,t_n]}\|\psi(t)\|_{\Sigma^k}, d, k$. Combining the estimates for (I),(II),(III) completes the proof.
\end{proof}

\section{Stable simulation of the DNLSE}\label{sec:DNLSE}
We aim to solve the derivative nonlinear Schr\"odinger equation (DNLSE)
	\begin{align}\label{eqn:DNLSE}
    i\partial_t\psi+\partial_x^2\psi-2i\delta\partial_x(|\psi|^2\psi)=0, \quad x\in\mathbb{R},
	\end{align}
i.e. \eqref{eqn:general_schroedinger} with $V(\psi,x)=i\delta\partial_x(|\psi|^2\psi)$. The stiff nonlinearity generally causes stability issues in classical methods, and leads to the imposition of stringend CFL conditions. In our new approach we exploit the so-called R-transform to derive fully explicit unconditionally stable methods (with respect to time step size and spatial resolution). 

\subsection{The R-transform and stable algorithm for the DNLSE}\label{sec:stable_algorithm_DNLSE}
The basis for our new stable algorithm is the R-transform introduced in \cite{hayashi1993initial} based on work by \cite{kundu1984landau}. For this we introduce the new variables
\begin{align}\label{eqn:transformation}
	u:=E^2\psi,\quad v:=E\partial_x(E\psi),\quad E(t,x)=\exp\left(i\delta\int_{-\infty}^x|\psi(t,y)|^2dy\right),
\end{align}
which makes \eqref{eqn:DNLSE} equivalent to the following coupled system:
\begin{align}\begin{split}\label{eqn:coupled_system}
		\partial_t u&=i\partial_x^2 u+2u^2\bar{v},\quad
		\partial_t v=i\partial_x^2 v-2v^2\bar{u}.
	\end{split}
\end{align}
This corresponds to coupled system of Schr\"odinger equations of the form \eqref{eqn:general_schroedinger} with complex potentials $V_1(u,v,x)=i2u^2\bar{v}$ and $V_2(u,v,x)=-i2v^2\bar{u}$. We can therefore exploit \eqref{eqn:transformation} to construct the following unconditionally stable method for \eqref{eqn:DNLSE}.

\paragraph{1. Transforming the initial conditions.}
Given initial data $\psi_0(x)$ for \eqref{eqn:DNLSE}, we first compute the
gauge factor
\[
E_0(x)
=
\exp\!\left(
i\delta \int_{-\infty}^x |\psi_0(y)|^2\,dy
\right),
\]
where the spatial integral is evaluated spectrally using the Hermite basis, as described in Appendix~\ref{app:stable_integration}. The initial conditions for the transformed variables are then defined by
\begin{align*}
u_0(x) &= E_0(x)^2\,\psi_0(x), \\
v_0(x) &= E_0(x)\,\partial_x\!\bigl(E_0(x)\psi_0(x)\bigr).
\end{align*}
The spatial derivative is computed spectrally using
the Hermite differentiation operator $\mathsf{A}_{\partial}$ given in \eqref{eqn:differentiation_operator}.

\paragraph{2. Time discretisation and splitting scheme.}
To numerically integrate the coupled system \eqref{eqn:coupled_system}, we employ a splitting method. Note in principle this can be done at arbitrary order, but for now we focus on the second-order Strang splitting method in time. Writing the system abstractly as
\[
\partial_t 
\begin{pmatrix}
u \\ v
\end{pmatrix}
=
\mathcal{L}
\begin{pmatrix}
u \\ v
\end{pmatrix}
+
\mathcal{N}
\begin{pmatrix}
u \\ v
\end{pmatrix},
\]
we decompose the evolution into a linear dispersive part
\[
\mathcal{L}
\begin{pmatrix}
u \\ v
\end{pmatrix}
=
\begin{pmatrix}
i\partial_x^2 u \\
i\partial_x^2 v
\end{pmatrix},
\]
and a nonlinear coupling part
\[
\mathcal{N}
\begin{pmatrix}
u \\ v
\end{pmatrix}
=
\begin{pmatrix}
2u^2\bar v \\
-2v^2\bar u
\end{pmatrix}.
\]
We note that the nonlinear equation
\begin{align}\begin{split}\label{eqn:nonlinear_system_uv}
\partial_t u&=2u^2\bar{v},\\
\partial_t v&=-2v^2\bar{u},\end{split}
\end{align}
has the exact solution
\begin{align*}
    \begin{pmatrix}
        u(t)\\
        v(t)
    \end{pmatrix}=\begin{pmatrix}
        e^{t2u_0\bar{v}_0}&0\\
        0&e^{-2t\bar{u}_0v_0}
    \end{pmatrix}\begin{pmatrix}
        u_0\\
        v_0
    \end{pmatrix},
\end{align*}
since $u\bar{v}$ is conserved in \eqref{eqn:nonlinear_system_uv}. Thus we can define the Strang splitting scheme as follows
\begin{align*}
    \begin{pmatrix}
    u^{n+\frac12}\\
    v^{n+\frac12}
    \end{pmatrix}&=\begin{pmatrix}
        e^{i\frac{\tau}{2}\Delta}&0\\
        0&e^{i\frac{\tau}{2}\Delta}
    \end{pmatrix}\begin{pmatrix}
        u^n\\
        v^n
    \end{pmatrix},\\
    \begin{pmatrix}
    u^{n+1}\\
    v^{n+1}
    \end{pmatrix}&=\tilde{\Phi}_\tau\begin{pmatrix}
    u^{n}\\
    v^{n}
    \end{pmatrix}:=\begin{pmatrix}
        e^{i\frac{\tau}{2}\Delta}&0\\
        0&e^{i\frac{\tau}{2}\Delta}
    \end{pmatrix}
    \begin{pmatrix}
        e^{\tau2u^{n+\frac12}\bar{v}^{n+\frac12}}&0\\
        0&e^{-2\tau\bar{u}^{n+\frac12}v^{n+\frac12}}
    \end{pmatrix}\begin{pmatrix}
    u^{n+\frac12}\\
    v^{n+\frac12}
    \end{pmatrix}.
\end{align*}
\paragraph{3. Reconstruction of $\psi$.}
After completing the time integration for $(u,v)$, the original variable $\psi$ is recovered via the inverse R-transform $\psi(t,x) = E(t,x)^{-2} u(t,x)$, where $E$ is computed numerically from
\[
E(t,x) = \exp\!\left( i\delta \int_{-\infty}^x |u(t,y)|^2\,dy \right),
\]
using the algorithm described in Appendix~\ref{app:stable_integration}, since $|u|=|\psi|$. 

\subsection{Semi-discrete convergence analysis of the DNLSE algorithm}\label{sec:semi-discrete_analysis_DNLSE}
\begin{theorem}\label{thm:semi-discrete_global_error_DNLSE}
	Let $k> 1/2+5$ be an integer. Suppose $\psi(t,x)$ is the exact solution of \eqref{eqn:DNLSE}. Then there exists a $\tau_0>0$ such that for all $0 < \tau < \tau_0$,
    \begin{align*}
        \|\psi^n-\psi(t_n,\cdot)\|_{L^2}\leq C \tau^2,\quad \text{for all\ }0\leq t_n=n\tau\leq T
    \end{align*}
    with a constant $C>0$ depending only on $T,\sup_{t\in [0,T]}\|\psi(t,\cdot)\|_{\Sigma^{k}}, d, k$. 
\end{theorem}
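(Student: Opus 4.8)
The plan is to reduce the analysis of the DNLSE scheme to a semi-discrete convergence result for the Strang splitting applied to the coupled system \eqref{eqn:coupled_system}, and then transfer the resulting error back to $\psi$ through the inverse R-transform. The crucial structural observation is that, unlike the original equation \eqref{eqn:DNLSE}, the transformed system \eqref{eqn:coupled_system} is genuinely semilinear: its nonlinearities $2u^2\bar v$ and $-2v^2\bar u$ contain no spatial derivatives, so the whole machinery developed for the cubic NLSE in \S\ref{sec:semi-discrete_temporal_analysis_cubic_NLSE} applies once it is adapted to a Strang (second order) splitting of a two-component system. I would first record that the R-transform \eqref{eqn:transformation} and its inverse map $\Sigma^k$ boundedly into $\Sigma^{k'}$ with a controlled loss of derivatives: since $|E|\equiv 1$ the phase factor does not destroy the spatial decay encoded in the $|x|^k$ weight, while $v_0$ costs exactly one derivative through $\partial_x(E_0\psi_0)=i\delta|\psi_0|^2E_0\psi_0+E_0\partial_x\psi_0$. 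Consequently $\psi(t,\cdot)\in\Sigma^{k}$ uniformly on $[0,T]$ yields $(u(t,\cdot),v(t,\cdot))\in\Sigma^{k-1}$ uniformly on $[0,T]$, and for integer $k>1/2+5$ the transformed solution lives in $\Sigma^{5}$, i.e. with the four spare derivatives above the smallest integer $k'=1$ exceeding the one-dimensional embedding threshold $\Sigma^{k'}\hookrightarrow L^\infty$, exactly as needed for the second order splitting analysis.

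Next I would establish the two ingredients of a Lady-Windermere's fan argument for the Strang map $\tilde\Phi_\tau$ on the coupled system, in direct analogy with Lemmas \ref{lem:stability_cubic_nlse} and \ref{lem:consistency_cubic_nlse}. For stability, Proposition \ref{prop:stability_free_schroedinger} controls the two linear half-steps $e^{i\frac{\tau}{2}\Delta}$ in $\Sigma^{k'}$, while the nonlinear subflow \eqref{eqn:nonlinear_system_uv} is handled by a Gronwall estimate exploiting the conservation of $u\bar v$ (which is precisely what renders its flow explicitly solvable, since $\partial_t(u\bar v)=2u^2\bar v^2-2u^2\bar v^2=0$); as the nonlinearity is a smooth polynomial and $\Sigma^{k'}$ is an algebra, this yields a Lipschitz bound $\|\tilde\Phi_\tau(U)-\tilde\Phi_\tau(W)\|_{\Sigma^{k'}}\le e^{C\tau}\|U-W\|_{\Sigma^{k'}}$ on bounded sets, for $U=(u,v)$. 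For consistency I would expand the local splitting error via the iterated Lie commutators of the vector fields $\mathcal L$ and $\mathcal N$, as for standard NLSE splitting \cite{lubich2008splitting}, obtaining a local error of order $\tau^3$ in $\Sigma^{k'}$ provided the exact flow is bounded in $\Sigma^{k'+4}=\Sigma^{5}$; the two derivatives lost in each of the two double commutators account for the four-derivative margin. Combining stability and consistency over $\sim T/\tau$ steps gives the global bound $\|u^n-u(t_n)\|_{\Sigma^{k'}}+\|v^n-v(t_n)\|_{\Sigma^{k'}}\le C\tau^2$, and in particular the $L^2$ bound $\|u^n-u(t_n)\|_{L^2}\le C\tau^2$ after the embedding $\Sigma^{k'}\hookrightarrow L^2$.

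Finally I would transfer this error to the original variable, noting that the reconstruction $\psi^n=(E^n)^{-2}u^n$ depends on $u^n$ alone through $E^n=\exp(i\delta\int_{-\infty}^{\,\cdot}|u^n|^2)$, using $|u|=|\psi|$. Writing $\psi(t_n)=E(t_n)^{-2}u(t_n)$ I split
\[
\psi^n-\psi(t_n)=(E^n)^{-2}\bigl(u^n-u(t_n)\bigr)+\bigl((E^n)^{-2}-E(t_n)^{-2}\bigr)u(t_n).
\]
The first term is bounded by $\|u^n-u(t_n)\|_{L^2}$ since $|E^n|\equiv1$. For the second term, $|e^{ia}-e^{ib}|\le|a-b|$ gives $\|E^n-E(t_n)\|_{L^\infty}\le|\delta|\,\bigl\||u^n|^2-|u(t_n)|^2\bigr\|_{L^1}\le|\delta|\bigl(\|u^n\|_{L^2}+\|u(t_n)\|_{L^2}\bigr)\|u^n-u(t_n)\|_{L^2}$ by Cauchy--Schwarz, and the unit-modulus map $E\mapsto E^{-2}$ is $2$-Lipschitz, so that term is $\lesssim\|u(t_n)\|_{L^2}\|u^n-u(t_n)\|_{L^2}$. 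Hence $\|\psi^n-\psi(t_n)\|_{L^2}\le C\|u^n-u(t_n)\|_{L^2}\le C\tau^2$, completing the argument.

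The main obstacle I anticipate is not the (now derivative-free) splitting analysis itself but the bookkeeping around the R-transform: verifying rigorously that the transform and its inverse are bounded and locally Lipschitz between the weighted spaces $\Sigma^k$ and $\Sigma^{k'}$, with the nonlocal phase $E$ — which does not decay at infinity — nonetheless preserving the $|x|^k$-weighted $L^2$ control, and tracking the derivative loss so that the final threshold comes out as $k>1/2+5$. A secondary technical point is securing the a priori $\Sigma^{k'}$-boundedness of the numerical iterates $(u^n,v^n)$ needed to close the Gronwall and fan estimates, which follows from the stability lemma together with the uniform boundedness of the exact transformed solution on $[0,T]$.
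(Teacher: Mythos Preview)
Your proposal is correct and follows essentially the same three-step approach as the paper: (i) bound the R-transform between $\Sigma^k$ spaces with a one-derivative loss coming from $v$, (ii) run a Lady-Windermere fan for the Strang splitting on the coupled $(u,v)$ system using Propositions~\ref{prop:stability_dnlse} and~\ref{prop:local_error_dnlse}, and (iii) transfer the $L^2$ error back to $\psi$ via $|E|\equiv 1$ together with the Lipschitz bound $|e^{ia}-e^{ib}|\le|a-b|$ and Cauchy--Schwarz on $\||u^n|^2-|u(t_n)|^2\|_{L^1}$. Your splitting $\psi^n-\psi(t_n)=(E^n)^{-2}(u^n-u(t_n))+\bigl((E^n)^{-2}-E(t_n)^{-2}\bigr)u(t_n)$ is a cosmetic variant of the paper's (which first multiplies by the unit-modulus factor $E(t_n)^{2}$), and your derivative-counting for the threshold $k>1/2+5$ matches the paper's use of $\Sigma^{k'+4}$ regularity in the local error estimate after the one-derivative loss in forming $v$.
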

To prove this convergence estimate we need, similarly to \S\ref{sec:standard_cubic_NLSE}, the following auxiliary estimates.
\begin{prop}\label{prop:stability_dnlse} Let $k>1/2$ be an integer and $w_1,w_2,v_1,v_2\in \Sigma^k$. Then there is a constant $C>0$ depending only on $\|w_1\|_{\Sigma^k},\|w_2\|_{\Sigma^k},\|v_1\|_{\Sigma^k},\|v_2\|_{\Sigma^k}, d, k$ such that for all $\tau>0$
\begin{align*}
    \left\|\tilde{\Phi}_\tau\begin{pmatrix}
    w_1\\
    w_2
    \end{pmatrix}-\tilde{\Phi}_\tau\begin{pmatrix}
    v_1\\
    v_2
    \end{pmatrix}\right\|_{\Sigma^k}\leq e^{C\tau}\left\|\begin{pmatrix}
    w_1\\
    w_2
    \end{pmatrix}-\begin{pmatrix}
    v_1\\
    v_2
    \end{pmatrix}\right\|_{\Sigma^k}.
\end{align*}
\end{prop}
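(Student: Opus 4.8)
The plan is to mirror the proof of Lemma~\ref{lem:stability_cubic_nlse}, using the factorisation of the Strang map
\[
\tilde\Phi_\tau=\mathcal L_{\tau/2}\circ N_\tau\circ\mathcal L_{\tau/2},
\]
where $\mathcal L_{s}=\mathrm{diag}(e^{is\Delta},e^{is\Delta})$ is the (diagonal) free flow and $N_\tau$ is the exact solution operator of the nonlinear subsystem \eqref{eqn:nonlinear_system_uv},
\[
N_\tau(a_1,a_2)=\bigl(e^{2\tau a_1\bar a_2}a_1,\;e^{-2\tau\bar a_1 a_2}a_2\bigr),
\]
which is explicit precisely because $a_1\bar a_2$ is conserved pointwise along \eqref{eqn:nonlinear_system_uv}. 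Applying Proposition~\ref{prop:stability_free_schroedinger} componentwise to the two free half-flows (each contributing a factor $(1+C\tau/2)^k\le e^{C'\tau}$, and mapping the data into a ball whose radius is controlled by the initial norms) reduces the claim to a Lipschitz estimate $\|N_\tau(a)-N_\tau(b)\|_{\Sigma^k}\le e^{C\tau}\|a-b\|_{\Sigma^k}$ for the nonlinear operator, where $a=\mathcal L_{\tau/2}(w_1,w_2)$ and $b=\mathcal L_{\tau/2}(v_1,v_2)$.

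The estimate for $N_\tau$ rests on the Banach-algebra structure of $\Sigma^k$ for $k>1/2$ (Appendix~\ref{app:properties_Sigma_k}), say $\|fg\|_{\Sigma^k}\le C_k\|f\|_{\Sigma^k}\|g\|_{\Sigma^k}$, together with its invariance under conjugation, $\|\bar f\|_{\Sigma^k}=\|f\|_{\Sigma^k}$. The one structural fact I would isolate first is a multiplier bound for the gauge factors. Although $e^{g}$ itself is \emph{not} in $\Sigma^k$ (it does not decay at infinity), its fluctuation $e^{g}-1=\sum_{n\ge1}g^n/n!$ does lie in $\Sigma^k$ whenever $g\in\Sigma^k$, with $\|e^{g}-1\|_{\Sigma^k}\le C_k^{-1}\bigl(e^{C_k\|g\|_{\Sigma^k}}-1\bigr)$ by submultiplicativity. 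Consequently multiplication by $e^{g}$ is bounded on $\Sigma^k$,
\[
\|e^{g}h\|_{\Sigma^k}\le\bigl(1+C_k\|e^{g}-1\|_{\Sigma^k}\bigr)\|h\|_{\Sigma^k},
\]
and, writing $e^{g_a}-e^{g_b}=\int_0^1 e^{g_b+s(g_a-g_b)}(g_a-g_b)\,ds$, the difference of two such multipliers obeys $\|e^{g_a}-e^{g_b}\|_{\Sigma^k}\le C\bigl(\|g_a\|_{\Sigma^k},\|g_b\|_{\Sigma^k}\bigr)\|g_a-g_b\|_{\Sigma^k}$. I would record these as a short preliminary lemma.

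With these bounds in hand I split the first component as
\[
e^{2\tau a_1\bar a_2}a_1-e^{2\tau b_1\bar b_2}b_1=\bigl(e^{2\tau a_1\bar a_2}-e^{2\tau b_1\bar b_2}\bigr)a_1+e^{2\tau b_1\bar b_2}(a_1-b_1).
\]
The second term is controlled by the multiplier bound; for the first I invoke the difference-of-multipliers bound with $g_a-g_b=2\tau(a_1\bar a_2-b_1\bar b_2)$, whose norm is $\le 2\tau C_k(\|a_1-b_1\|_{\Sigma^k}\|a_2\|_{\Sigma^k}+\|b_1\|_{\Sigma^k}\|a_2-b_2\|_{\Sigma^k})$ by the algebra property. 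The explicit prefactor $2\tau$ supplies the gain that, together with the second component (treated identically under $u\leftrightarrow v$ and a sign change), is absorbed into a single factor $e^{C\tau}$; combining with the two applications of Proposition~\ref{prop:stability_free_schroedinger} then closes the estimate. Equivalently one could run Gronwall on the difference of the two trajectories of \eqref{eqn:nonlinear_system_uv}, exactly as in the cubic case, the a priori $\Sigma^k$-bounds being read off from the explicit flow, which does not blow up in finite time thanks to the conservation of $a_1\bar a_2$.

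The main obstacle is the correct handling of the gauge multipliers in $\Sigma^k$: since constants do not belong to $\Sigma^k$, one cannot bound $\|e^{g}\|_{\Sigma^k}$ directly and must instead argue through the fluctuation $e^{g}-1$ and the operator norm of multiplication by $e^{g}$. The remaining care is bookkeeping: ensuring the $u$- and $v$-component errors close despite their coupling through the cross term $a_1\bar a_2$, and checking that the surviving prefactors are genuinely $O(\tau)$ so that they fold into $e^{C\tau}$ rather than degrading the rate.
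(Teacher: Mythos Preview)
Your proposal is correct and follows the same overall skeleton as the paper: factor $\tilde\Phi_\tau=\mathcal L_{\tau/2}\circ N_\tau\circ\mathcal L_{\tau/2}$, peel off the two free half-flows with Proposition~\ref{prop:stability_free_schroedinger}, and prove a $\Sigma^k$-Lipschitz bound for the nonlinear map $N_\tau$. The only difference is in how the nonlinear step is handled: the paper goes directly via Gronwall on the linear ODEs $\partial_t\eta_j=\pm 2 a_1\bar a_2\,\eta_j$ (using that $a_1\bar a_2$ is conserved), which you mention as an equivalent alternative; your primary route instead works with the explicit multipliers and the fluctuation $e^g-1$. Both arguments are valid, but note that the Gronwall route is slightly cleaner precisely because it never requires estimating $e^g$ in $\Sigma^k$---the algebra property applied to $a_1\bar a_2\,\eta_j$ suffices---so the issue you flag about constants not belonging to $\Sigma^k$ simply does not arise there.
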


\begin{proof}
    We proceed similarly to the proof of Lemma~\ref{lem:stability_cubic_nlse}. Using Proposition~\ref{prop:stability_free_schroedinger} we have that there is a constant $C_1>0$ depending only on $k,d$ such that 
    \begin{align*}
        \left\|\begin{pmatrix}
        e^{-i\frac{\tau}{2}\Delta}&0\\
        0&e^{-i\frac{\tau}{2}\Delta}
    \end{pmatrix}\left[\begin{pmatrix}
    w_1\\
    w_2
    \end{pmatrix}-\begin{pmatrix}
    v_1\\
    v_2
    \end{pmatrix}\right]\right\|_{\Sigma^k}\leq e^{\tau C_1}\left\|\begin{pmatrix}
    w_1\\
    w_2
    \end{pmatrix}-\begin{pmatrix}
    v_1\\
    v_2
    \end{pmatrix}\right\|_{\Sigma^k}.
    \end{align*}
    Thus it is sufficient to prove
  \begin{align*}
     &\left\|\begin{pmatrix}
        e^{i\frac{\tau}{2}\Delta}&0\\
        0&e^{i\frac{\tau}{2}\Delta}
    \end{pmatrix}
    \begin{pmatrix}
        e^{\tau2w_1\bar{w}_2}&0\\
        0&e^{-2\tau\bar{w}_1w_2}
    \end{pmatrix}\begin{pmatrix}
    w_1\\
    w_2
    \end{pmatrix}-\begin{pmatrix}
        e^{i\frac{\tau}{2}\Delta}&0\\
        0&e^{i\frac{\tau}{2}\Delta}
    \end{pmatrix}
    \begin{pmatrix}
        e^{\tau2v_1\bar{v}_2}&0\\
        0&e^{-2\tau\bar{v}_1v_2}
    \end{pmatrix}\begin{pmatrix}
    v_1\\
    v_2
    \end{pmatrix}\right\|_{\Sigma^k}\\&\hspace{12cm}\leq e^{\tau C_2}\left\|\begin{pmatrix}
    w_1\\
    w_2
    \end{pmatrix}-\begin{pmatrix}
    v_1\\
    v_2
    \end{pmatrix}\right\|_{\Sigma^k},
    \end{align*}
    for a constant $C_2>0$ which only depends on $\|w_1\|_{\Sigma^k},\|w_2\|_{\Sigma^k},\|v_1\|_{\Sigma^k},\|v_2\|_{\Sigma^k}, d, k$. This estimate follows immediately by Proposition~\ref{prop:stability_free_schroedinger} and by using Gronwall's lemma on\vspace{-0.25cm} 
\begin{align*}
\begin{cases}\partial_t \eta_1 &= 2 w_1\overline{w_2}\eta_1, \\
\partial_t \eta_2 &= -2 w_2 \overline{w_1}\eta_2,\end{cases}\quad\text{and}\quad\begin{cases}
\partial_t \theta_1 &= 2 v_1 \overline{v_2}\theta_1, \\
\partial_t \theta_2 &= -2 v_2,\overline{v_1}\theta_2.
\end{cases}
\end{align*}
\end{proof}
\begin{prop}\label{prop:local_error_dnlse}
 Let $k>1/2$ be an integer and denote by $u(t,x),v(t,x)$ the exact solution of \eqref{eqn:coupled_system}. Then there is a constant $C>0$ depending only on $\sup_{t\in[0,\tau]}\|(u(t,\cdot),v(t,\cdot))\|_{\Sigma^{k+4}}$ such that
\begin{align*}
    \left\|\tilde{\Phi}_\tau\begin{pmatrix}
        u_0\\
        v_0
    \end{pmatrix}-\begin{pmatrix}
        u(\tau)\\
        v(\tau)
    \end{pmatrix}\right\|_{\Sigma^{k}}\leq C \tau^3.
\end{align*}
\end{prop}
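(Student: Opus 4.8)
The plan is to follow the classical local-error analysis for symmetric Strang splitting (as in \cite[Section 4.4]{lubich2008splitting} and \cite{Gauckler2011}), adapted to the coupled system \eqref{eqn:coupled_system}. Writing $w=(u,v)$ and $\partial_t w = \mathcal{L}w + \mathcal{N}(w)$ with $\mathcal{L}w = (i\partial_x^2 u,\, i\partial_x^2 v)$ and $\mathcal{N}(w) = (2u^2\bar v,\, -2v^2\bar u)$, the decisive structural feature is that the R-transform has removed \emph{all} spatial derivatives from the nonlinearity: $\mathcal{N}$ is a fixed cubic polynomial in the components of $w$ and $\bar w$, with no differential operators. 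Consequently every derivative appearing in the error expansion is supplied only by the Laplacian and can be bookkept exactly; this is precisely the non-stiffness that the gauge transform buys us.

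The key steps are as follows. First I would expand both the exact flow $\Psi_\tau(w_0):=(u(\tau),v(\tau))$ and the numerical map $\tilde\Phi_\tau(w_0)=e^{\frac{\tau}{2}\mathcal{L}}\,e^{\tau\mathcal{N}}\,e^{\frac{\tau}{2}\mathcal{L}}(w_0)$ in powers of $\tau$ around $\tau=0$, using the variation-of-constants representation of each substep with integral remainder. Because Strang splitting is symmetric, the $O(1)$, $O(\tau)$ and $O(\tau^2)$ contributions of $\tilde\Phi_\tau$ and $\Psi_\tau$ coincide, so the leading discrepancy occurs at order $\tau^3$. By the Baker--Campbell--Hausdorff calculus for the two vector fields $\mathcal L$ and $\mathcal N$, this leading term is a fixed linear combination of the two iterated Lie commutators $[\mathcal L,[\mathcal L,\mathcal N]]$ and $[\mathcal N,[\mathcal N,\mathcal L]]$ evaluated at $w_0$, plus an $O(\tau^4)$ remainder. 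Here the Lie commutator of two vector fields is $[\mathcal F,\mathcal G](w)=\mathcal F'(w)\mathcal G(w)-\mathcal G'(w)\mathcal F(w)$, exactly as in Lemma~\ref{lem:consistency_cubic_nlse}.

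The remaining work is to bound these commutators in $\Sigma^k$. I would use two facts about $\Sigma^k$ recorded in Appendix~\ref{app:properties_Sigma_k}: that $\Sigma^k$ is a Banach algebra for $k>1/2$, so that the polynomial $\mathcal N$ and all its Fréchet derivatives map $\Sigma^k$ boundedly into itself; and that $\partial_x\colon\Sigma^{k+1}\to\Sigma^k$ is bounded, so that each application of $\mathcal L=i\partial_x^2$ costs exactly two orders of regularity. A derivative count then shows that $[\mathcal L,\mathcal N]$ loses two derivatives, the double commutator $[\mathcal N,[\mathcal N,\mathcal L]]$ again loses only two, whereas $[\mathcal L,[\mathcal L,\mathcal N]]$ loses four. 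This last, worst term is precisely what forces the hypothesis on $\|(u,v)\|_{\Sigma^{k+4}}$: one obtains $\|[\mathcal L,[\mathcal L,\mathcal N]](w_0)\|_{\Sigma^k}\le P(\|w_0\|_{\Sigma^{k+4}})$ for a polynomial $P$, and the $O(\tau^4)$ remainder is likewise controlled once the intermediate states are bounded in $\Sigma^{k+4}$.

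The main obstacle is the rigorous control of those intermediate states and of the $\tau^4$ remainder on $[0,\tau]$. Here Proposition~\ref{prop:stability_free_schroedinger} is essential: it guarantees that $e^{\frac{\tau}{2}\mathcal L}$ is bounded on $\Sigma^{k+4}$ with at most polynomial-in-$\tau$ growth, so the state after each linear half-step stays in $\Sigma^{k+4}$ with a uniform bound on $[0,\tau]$; the explicit nonlinear flow preserves $\Sigma^{k+4}$-regularity by conservation of $u\bar v$ together with a Gronwall argument exactly as in Proposition~\ref{prop:stability_dnlse}. A secondary subtlety, which does not affect the derivative count but must be handled with care, is that $\mathcal N$ is not holomorphic because it involves the conjugates $\bar u,\bar v$; one therefore works with the real Fréchet derivative throughout, which is still a bounded multilinear map on the algebra $\Sigma^k$. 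Assembling the commutator bounds with the flow bounds yields the claimed local error $C\tau^3$, with $C$ depending only on $\sup_{t\in[0,\tau]}\|(u(t,\cdot),v(t,\cdot))\|_{\Sigma^{k+4}}$.
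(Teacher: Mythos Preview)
Your proposal is correct and follows essentially the same approach as the paper, which simply states that the proof ``can be performed analogously to the Fourier spectral case, cf.\ Section~5.2 in \cite{lubich2008splitting}.'' You have in fact written out in detail precisely what that one-line reference entails: the commutator expansion for Strang splitting, the derivative count showing $[\mathcal L,[\mathcal L,\mathcal N]]$ is the worst term costing four derivatives, and the use of Proposition~\ref{prop:stability_free_schroedinger} together with the algebra property of $\Sigma^k$ to control the intermediate states---so your sketch is rather more informative than the paper's own proof.
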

\begin{proof}This proof can be performed analogously to the Fourier spectral case, cf. Section 5.2 in \cite{lubich2008splitting}.   
\end{proof}

\begin{proof}[Proof of Theorem~\ref{thm:semi-discrete_global_error_DNLSE}]\ \\
\textbf{Step 1: Relate $\|E(f)^2f\|_{\Sigma^k}$ to $\|f\|_{\Sigma^k}$.} For a given function $f\in\Sigma^s, s>1/2, s\in\mathbb{N}$, let $E(f) = \exp\!\left( i\delta \int_{-\infty}^x |f(y)|^2\,dy \right)$. Then we have $\|x^sE(f)^2f\|_{L^2}=\|x^sf\|_{L^2}$, and
\begin{align*}
    \|\partial_x (E(f)^2f)\|_{L^2}&=\| (E(f)^2)\partial_xf\|_{L^2}+ \|f\partial_x (E(f)^2)\|_{L^2}=\|\partial_xf\|_{L^2}+\|f\partial_x (E(f)^2)\|_{L^2}\\
    &=\|\partial_xf\|_{L^2}+\|E(f)^2f 2i\delta |f|^2\|_{L^2}\leq \|f\|_{H^1}+2|\delta|\|f\|_{H^1}^3.
\end{align*}
Analogously, we have
    \begin{align*}
\|\partial_x^s (E(f)^2 f)\|_{L^2}
&\le \sum_{j=0}^s \binom{s}{j}\,\|\partial_x^j(E(f)^2)\|_{L^\infty}\,\|\partial_x^{s-j}f\|_{L^2}\le C_s\Bigl(1+\sum_{m=1}^s |\delta|^m \|f\|_{H^s}^{2m}\Bigr)\,\|f\|_{H^s},
\end{align*}
for some constants $C_s>0$ which only depend on $s$. Thus, there is a constant $C>0$ independent of $f$ (but depending on $\delta, k$) such that
\begin{align}\label{eqn:swap_dependency_u_psi}
    \|E(f)^2f\|_{\Sigma^k}\leq C \left(1+\|f\|^{2k}_{\Sigma^k}\right)\|f\|_{\Sigma^k}.
\end{align}
In the following, this allows us to note that any constant that depends on $\|u\|_{\Sigma^k}$ equivalently depends on $\|\psi\|_{\Sigma^k}$. Analogously we can show that $\|v\|_{\Sigma^k}$ is bounded above by a polynomial function of $\|\psi\|_{\Sigma^{k+1}}$, noting that the increased degree stems from the derivative in the expression for $v$ in \eqref{eqn:transformation}.\\
\textbf{Step 2: Study convergence of $u^n$.} Using a standard Lady Windermere's fan argument we can combine Proposition~\ref{prop:stability_dnlse}, Proposition~\ref{prop:local_error_dnlse} and \eqref{eqn:swap_dependency_u_psi} to show that there is a constant $C,\tau_0>0$ depending only on $\sup_{t\in[0,t_n]}\|\psi(t)\|_{\Sigma^k},k,\delta$ such that for any $0<\tau<\tau_0$
\begin{align}\label{eqn:estimate_on_u}
    \|u^n-u(t_n)\|_{L^2}\leq C \tau^2.
\end{align}
\textbf{Step 3: Connect to $\psi^n$.}
We have
\begin{align}\nonumber
    \|\psi^n-\psi(t_n,\cdot)\|_{L^2}&=\left\|e^{i\delta\int_{-\infty}^{x}|u^n(y)|^2-|u(t_n,y)|^2dy}u^n-u(t_n)\right\|_{L^2}\\\label{eqn:split_of_psi}
    &\leq \underbrace{\left\|e^{i\delta\int_{-\infty}^{x}|u^n(y)|^2-|u(t_n,y)|^2dy}u^n-u^n\right\|_{L^2}}_{\mathrm{(A)}}+\underbrace{\left\|u^n-u(t_n)\right\|_{L^2}}_{\mathrm{(B)}}.
\end{align}
For (A) we note that we have the following pointwise estimate using Cauchy--Schwarz
\begin{align}\nonumber
    \left|e^{i\delta\int_{-\infty}^{x}|u^n(y)|^2-|u(t_n,y)|^2dy}-1\right|&\leq \left|\delta\int_{-\infty}^{x}|u^n(y)|^2-|u(t_n,y)|^2dy\right|^2\\\nonumber
    &=|\delta|\left|\int_{-\infty}^{x}\left(u^n(y)-u(t_n,y)\right)\left(u^n(y)+u(t_n,y)\right)dy\right|^2\\\label{eqn:central_estimate_for_twisting_back}
    &\leq |\delta|\left\|u^n(y)-u(t_n,y)\right\|_{L^2}^2\left(\|u^n\|_{L^2}^2+\|u(t_n)\|_{L^2}^2\right)\leq \tilde{C}\tau^2,
\end{align}
where $\tilde{C}$ depends only on $\sup_{t\in[0,T]}\|u(t)\|_{\Sigma^k}$. Thus we can estimate
\begin{align}\nonumber
    \mathrm{(A)}&=\left\|\left(e^{i\delta\int_{-\infty}^{x}|u^n(y)|^2-|u(t_n,y)|^2dy}-1\right)u^n\right\|_{L^2}\\\label{eqn:estimate_of_A}
    &\leq \|u^n\|_{L^2}\sup_{x\in\mathbb{R}}\left|e^{i\delta\int_{-\infty}^{x}|u^n(y)|^2-|u(t_n,y)|^2dy}-1\right|\leq \tilde{\tilde{C}}\tau^2\|u^n\|_{L^2}.
\end{align}
Combining \eqref{eqn:split_of_psi}, \eqref{eqn:estimate_on_u} and \eqref{eqn:estimate_of_A} then yields the desired result.
\end{proof}
\subsection{Fully-discrete convergence analysis of the DNLSE algorithm}\label{sec:fully-discrete_analysis_DNLSE}
The fully discrete scheme is initialised with
\georg{\begin{align}\label{eqn:true_fully_discrete_initial_cond1}
E_{0,M}(x)&=\exp\!\left( i\delta \int_{-\infty}^x \mathcal{Q}_M(|\psi_0(t,y)|^2)\,dy \right)\\\label{eqn:true_fully_discrete_initial_cond2}
u^0_{M} &= \mathcal{Q}_{M}\!\left(\left(E_{0,M}\right)^2\psi_0\right), \\\label{eqn:true_fully_discrete_initial_cond3}
v^0_{M} &= \mathcal{Q}_{M}\!\left(i\delta \left(E_{0,M}\right)^2\psi_0|\psi_0|^2+\left(E_{0,M}\right)^2\,\partial_x\!\bigl(\psi_0\bigr)\right).
\end{align}
where in the expression for $v^0_{M}$ we used the chainrule to reduce the number of interpolation steps required. We then proceed with the temporal evolution of the form}
    \begin{align}\begin{split}\label{eqn:fully_discrete_uv_system}
    \begin{pmatrix}
    u_M^{n+\frac12}\\
    v_M^{n+\frac12}
    \end{pmatrix}&=\begin{pmatrix}
        e^{i\frac{\tau}{2}\Delta}&0\\
        0&e^{i\frac{\tau}{2}\Delta}
    \end{pmatrix}\begin{pmatrix}
        u_M^n\\
        v_M^n
    \end{pmatrix},\\
    \begin{pmatrix}
    u_M^{n+1}\\
    v_M^{n+1}
    \end{pmatrix}&=\tilde{\Phi}_{\tau,M}\begin{pmatrix}
    u_M^{n}\\
    v_M^{n}
    \end{pmatrix}:=\begin{pmatrix}
        e^{i\frac{\tau}{2}\Delta}&0\\
        0&e^{i\frac{\tau}{2}\Delta}
    \end{pmatrix}\mathcal{Q}_M\!\left[
    \begin{pmatrix}
        e^{\tau2u_m^{n+\frac12}\bar{v}_M^{n+\frac12}}&0\\
        0&e^{-2\tau\bar{u}_M^{n+\frac12}v_M^{n+\frac12}}
    \end{pmatrix}\begin{pmatrix}
    u_M^{n+\frac12}\\
    v_M^{n+\frac12}
    \end{pmatrix}\right].
    \end{split}
\end{align}
And finally
\begin{align}\label{eqn:fully_discrete_map_u_to_psi}
    \psi_M^n=\exp\!\left(-2
i\delta \int_{-\infty}^x \georg{\mathcal{Q}_M}(|u^n_M(y)|^2)\,dy
\right)u^n_M.
\end{align}
We can then study the convergence properties of this scheme, beginning with the $(u,v)$-component.
\begin{prop}[Fully discrete convergence for the coupled $(u,v)$ system]\label{prop:fully_discrete_uv}
Let $k\ge 4$ be an integer. 
Let $(u(t),v(t))$ be the exact solution of \eqref{eqn:coupled_system} on $[0,T]$,
and assume
\[
\sup_{t\in[0,T]}\bigl(\|u(t)\|_{\Sigma^{k+4}}+\|v(t)\|_{\Sigma^{k+4}}\bigr)<\infty.
\]
Let $(u_M^n,v_M^n)$ be generated by 
\eqref{eqn:fully_discrete_uv_system}. Then there exist $\tau_0>0$, $M_0>0$ and a constant $C>0$ (depending only on
$T,k$ and $\sup_{t\in[0,T]}\bigl(\|u(t)\|_{\Sigma^{k+4}}+\|v(t)\|_{\Sigma^{k+4}}\bigr)$) such that for all $0<\tau<\tau_0$ and all $M\ge M_0$,
\begin{align}
\max_{0\le n\tau\le T}
\left(
\|u_M^n-u(n\tau)\|_{L^2}
+
\|v_M^n-v(n\tau)\|_{L^2}
\right)
\;\le\;
C\Bigl(\tau^2+ M^{\,1+\frac{1}{3}-\frac{k}{2}}\Bigr).
\label{eq:fully_discrete_uv_L2}
\end{align}
\end{prop}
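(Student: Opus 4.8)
The plan is to mirror the fully discrete analysis of the cubic NLSE carried out in Theorem~\ref{thm:fully_discrete_error_analysis_cubicNLSE}, adapting it to the coupled $(u,v)$ system. First I would split the error using the triangle inequality in the same three-term fashion: writing $w=(u,v)$ and $w_M^n=(u_M^n,v_M^n)$, I would decompose
\[
\|w_M^n-w(t_n)\|_{L^2}
\le
\|w_M^n-\mathcal{Q}_M(w^n)\|_{L^2}
+\|\mathcal{Q}_M(w^n)-w^n\|_{L^2}
+\|w^n-w(t_n)\|_{L^2},
\]
where $w^n$ denotes the semi-discrete Strang solution. The third term is controlled by the semi-discrete estimate \eqref{eqn:estimate_on_u} (and its analogue for $v$), giving an $O(\tau^2)$ bound; the crucial input here is that Step~1 of the proof of Theorem~\ref{thm:semi-discrete_global_error_DNLSE}, together with \eqref{eqn:swap_dependency_u_psi}, lets me express all constants in terms of $\sup_t\|\psi(t)\|_{\Sigma^{k}}$ and hence in terms of $M_{k+4}$. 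The second term is bounded by Lemma~\ref{lem:Hermite_interpolation} as $CM^{\frac13-\frac{k}{2}}\|w^n\|_{\Sigma^k}$, using boundedness of the semi-discrete solution in $\Sigma^k$.

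The main work is the first term, which I would treat by the Lady Windermere's fan argument exactly as in the proof of Theorem~\ref{thm:fully_discrete_error_analysis_cubicNLSE}. Introducing the one-step maps $\tilde\Phi_\tau$ and $\tilde\Phi_{\tau,M}$ I would write the accumulated error as a telescoping sum $\sum_{j}\bigl(\prod_{m>j}L_m\bigr)\delta_j$, where the stability factors $L_m$ come from Proposition~\ref{prop:stability_dnlse} and the local interpolation defects $\delta_j$ come from an analogue of Lemma~\ref{lem:local_error_fully_discrete_NLSE}. The local error of one step of $\tilde\Phi_{\tau,M}$ relative to $\mathcal{Q}_M\tilde\Phi_\tau$ is governed by the interpolation estimate of Lemma~\ref{lem:Hermite_interpolation} applied to the nonlinear flow, contributing a factor $M^{1+\frac13-\frac{k}{2}}$; summing over $O(1/\tau)$ steps and using the uniform stability exponent yields the claimed $M^{1+\frac13-\frac{k}{2}}$ contribution. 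The loss of $+4$ derivatives in $M_{k+4}$ rather than $+2$ reflects that the local error estimate for Strang splitting of the $(u,v)$ system (Proposition~\ref{prop:local_error_dnlse}) already costs four extra derivatives, which must be available throughout the fan argument.

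The hard part will be controlling the quantity $\tilde a(n)$, the maximal $\Sigma^{k'}$-norm of the iterated numerical flow, uniformly in $n$ and $M$, since the nonlinearity $2u^2\bar v$ is no longer a gauge term and does not preserve $\Sigma^k$-norms as cleanly as the phase rotation $e^{-i\tau\mu|u|^2}$ in the cubic case. As in \cite{Gauckler2011} I would use the inverse inequality $\|f\|_{\Sigma^{k'}}\le CM^{k'/2}\|f\|_{L^2}$ on $\mathcal{S}_M$ to bootstrap: one shows the $\Sigma^{k'}$-error grows at worst like $M^{1+\frac13+\frac{k'}{2}-\frac{k}{2}}$, so for $M$ large and $k$ chosen above the stated threshold the exponent is negative, which forces $\tilde a(n)\le 2M_{k'}$ uniformly and closes the bootstrap. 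Finally I would combine the three estimates; since $v$ is obtained from $\psi$ with one extra derivative, tracking the norms at the level $k+4$ throughout guarantees all constants depend only on $T$, $k$ and $M_{k+4}$, completing the proof.
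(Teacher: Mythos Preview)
Your proposal is correct and follows essentially the same approach as the paper, which simply states that the result follows analogously to the proof of Theorem~\ref{thm:fully_discrete_error_analysis_cubicNLSE}. You in fact supply more detail than the paper does, correctly identifying the three-term decomposition, the Lady Windermere's fan argument with the bootstrap control of $\tilde a(n)$, and the reason for the $+4$ derivative loss via Proposition~\ref{prop:local_error_dnlse}; the only minor simplification is that the hypothesis of Proposition~\ref{prop:fully_discrete_uv} already bounds $\|u\|_{\Sigma^{k+4}}+\|v\|_{\Sigma^{k+4}}$ directly, so there is no need to pass through $\psi$ via \eqref{eqn:swap_dependency_u_psi}.
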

To prove this estimate we will first consider the propagation of \eqref{eqn:fully_discrete_uv_system} using the modified initial conditions
\begin{align}\label{eqn:modified_initial_conditions1}
\tilde{u}^0_{M} &= \mathcal{Q}_{M}\!\left((E_{0})^2\,\psi_0\right), \\\label{eqn:modified_initial_conditions2}
\tilde{v}^0_{M} &= \mathcal{Q}_{M}\!\left(E_{0}\partial_x\bigl(E_{0}\psi_0\bigr)\right),
\end{align}
and then use a stability estimate to extend this to \eqref{eqn:true_fully_discrete_initial_cond1}-\eqref{eqn:true_fully_discrete_initial_cond3}.
\begin{lemma}\label{lem:approx_of_u_0_v_0} Let $u_M^0,v_M^0$ be given by \eqref{eqn:true_fully_discrete_initial_cond1}-\eqref{eqn:true_fully_discrete_initial_cond3} and $\tilde{u}_M^0,\tilde{v}_M^0$ be defined as in \eqref{eqn:modified_initial_conditions1}-\eqref{eqn:modified_initial_conditions2}. Then, for every $k>\ell>\alpha>1/2$, there is a polynomial $F$ with coefficients that depend only on $\ell,\alpha,k,\delta$ such that for any $\psi_0\in \Sigma^{k+1}$ we have
\begin{align}\label{eqn:estimate_u_variable_initial_condition}
 \|\tilde{u}^0_M-u_M^0\|_{\Sigma^\ell}&\leq M^{1/3+\max\{\ell,\alpha\}/2-k/2}F(\|\psi_0\|_{\Sigma^k}),
 \\\label{eqn:estimate_v_variable_initial_condition}
 \|\tilde{v}^0_M-v_M^0\|_{\Sigma^\ell}&\leq M^{1/3+\max\{\ell,\alpha\}/2-k/2}F(\|\psi_0\|_{\Sigma^{k+1}}).
\end{align}
\end{lemma}
\begin{proof}
The proof of this statement is included in Appendix~\ref{app:proof_lemma_modified_initial_conditions}.
\end{proof}
\begin{proof}[Proof of Proposition~\ref{prop:fully_discrete_uv}] \georg{Consider $\tilde{u}^n_{M},\tilde{v}^n_{M}$ to be the approximations obtained via \eqref{eqn:fully_discrete_uv_system} with the initial conditions $\tilde{u}_M^0,\tilde{v}_M^0$ from \eqref{eqn:modified_initial_conditions1}-\eqref{eqn:modified_initial_conditions2}. Then, analogously to the proof of Theorem~\ref{thm:fully_discrete_error_analysis_cubicNLSE}, we have
\begin{align*}
\max_{0\le n\tau\le T}
\left(
\|\tilde{u}_M^n-u(n\tau)\|_{L^2}
+
\|\tilde{v}_M^n-v(n\tau)\|_{L^2}
\right)
\;\le\;
C\Bigl(\tau^2+ M^{\,1+\frac{1}{3}-\frac{k}{2}}\Bigr),
\end{align*}
for a constant $C$ depending only on $T,k$ and $\sup_{t\in[0,T]}\bigl(\|u(t)\|_{\Sigma^{k+4}}+\|v(t)\|_{\Sigma^{k+4}}\bigr)$. The result then follows by a standard stability estimate analogous to Lemma~\ref{lem:stability_fully_discrete_NLSE} combined with Lemma~\ref{lem:approx_of_u_0_v_0}.
}
\end{proof}
This allows us to prove the following fully-discrete error bound on our novel R-transform-based integrator.
\begin{theorem}[Fully discrete convergence for DNLSE]\label{thm:fully-discrete_global_error_DNLSE}
Let $k\geq 4$ be an integer. We define the numerical reconstruction
\[
\psi_M^n(x)=\exp\!\left(-2i\delta \int_{-\infty}^x |u_M^n(y)|^2\,dy\right)u_M^n(x),
\]
and the exact reconstruction
\[
\psi(t_n,x)=\exp\!\left(-2i\delta \int_{-\infty}^x |u(t_n,y)|^2\,dy\right)u(t_n,x).
\]
Let $M_{k+5}=\sup_{t\in[0,T]}\|\psi(t)\|_{\Sigma^{k+5}}$. Then there exist $\tau_0>0$, $M_0>0$ and $C>0$ (depending only on $T,k,\delta$
and $M_{k+5}$) such that for all $0<\tau<\tau_0$ and $M\ge M_0$,
\begin{align}
\max_{0\le n\tau\le T}\|\psi_M^n-\psi(t_n)\|_{L^2}
\;\le\;
C\Bigl(\tau^2+ M^{\,1+\frac{1}{3}-\frac{k}{2}}\Bigr).
\label{eq:fully_discrete_psi_L2}
\end{align}
\end{theorem}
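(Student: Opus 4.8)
The plan is to reduce the fully discrete reconstruction error for $\psi$ to the already-established fully discrete error for the $u$-component in Proposition~\ref{prop:fully_discrete_uv}, following exactly the structure of Step~3 in the proof of Theorem~\ref{thm:semi-discrete_global_error_DNLSE}, but with the semi-discrete iterate $u^n$ replaced by the fully discrete iterate $u_M^n$. First I would record that the hypothesis $M_{k+4}=\sup_{t\in[0,T]}\|\psi(t)\|_{\Sigma^{k+4}}<\infty$ transfers to a bound on $\sup_{t\in[0,T]}\bigl(\|u(t)\|_{\Sigma^{k+4}}+\|v(t)\|_{\Sigma^{k+4}}\bigr)$ via the estimate \eqref{eqn:swap_dependency_u_psi} (and its analogue for $v$, which costs one extra derivative owing to the spatial derivative in the definition of $v$). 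With this in hand Proposition~\ref{prop:fully_discrete_uv} applies and yields
\[
\|u_M^n-u(t_n)\|_{L^2}\le C\bigl(\tau^2+M^{\,1+\frac13-\frac{k}{2}}\bigr),\qquad 0\le t_n\le T.
\]

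Next I would exploit that the two reconstructions differ only through a unimodular gauge factor. Writing $E_M^n(x)=\exp\!\bigl(i\delta\int_{-\infty}^x|u_M^n|^2\bigr)$ and $E(t_n,x)=\exp\!\bigl(i\delta\int_{-\infty}^x|u(t_n)|^2\bigr)$, we have $\psi_M^n=(E_M^n)^{-2}u_M^n$ and $\psi(t_n)=E(t_n)^{-2}u(t_n)$. Since $|E(t_n,x)|=1$ pointwise, multiplying inside the $L^2$-norm by the unimodular factor $E(t_n)^2$ leaves the norm unchanged, so that, as in Step~3,
\[
\|\psi_M^n-\psi(t_n)\|_{L^2}=\bigl\|e^{-2i\delta\int_{-\infty}^x(|u_M^n|^2-|u(t_n)|^2)\,dy}\,u_M^n-u(t_n)\bigr\|_{L^2}.
\]
I would then split this exactly as in \eqref{eqn:split_of_psi} into a gauge-mismatch term $\mathrm{(A)}$ and the bare difference $\mathrm{(B)}=\|u_M^n-u(t_n)\|_{L^2}$, and bound $\mathrm{(B)}$ directly by the displayed consequence of Proposition~\ref{prop:fully_discrete_uv}.

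For $\mathrm{(A)}$ I would reuse the pointwise argument of \eqref{eqn:central_estimate_for_twisting_back}: the bound $|e^{ia}-1|\le|a|$ together with Cauchy--Schwarz gives
\[
\sup_{x\in\R}\bigl|e^{-2i\delta\int_{-\infty}^x(|u_M^n|^2-|u(t_n)|^2)\,dy}-1\bigr|\le 2|\delta|\,\|u_M^n-u(t_n)\|_{L^2}\bigl(\|u_M^n\|_{L^2}+\|u(t_n)\|_{L^2}\bigr),
\]
and hence, as in \eqref{eqn:estimate_of_A}, $\mathrm{(A)}\le\|u_M^n\|_{L^2}\,\sup_x|\cdots|\le C\,\|u_M^n-u(t_n)\|_{L^2}$. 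Here $\|u_M^n\|_{L^2}$ is controlled uniformly by the triangle inequality $\|u_M^n\|_{L^2}\le\|u(t_n)\|_{L^2}+\|u_M^n-u(t_n)\|_{L^2}$, the right-hand side being bounded for $\tau<\tau_0$ and $M\ge M_0$ thanks to the bound on $\mathrm{(B)}$ and the assumption on $M_{k+4}$. Combining $\mathrm{(A)}$ and $\mathrm{(B)}$ delivers the claimed rate $\tau^2+M^{\,1+\frac13-\frac{k}{2}}$.

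I do not expect a genuine obstacle, since every ingredient mirrors the semi-discrete Step~3; the work is essentially that of transporting the error back through the gauge transform. The only point requiring real care is the uniform control of $\|u_M^n\|_{L^2}$ appearing in $\mathrm{(A)}$: this must be derived from the fully discrete bound on $\mathrm{(B)}$ rather than simply assumed, and one should verify that the constants and the thresholds $\tau_0,M_0$ supplied by Proposition~\ref{prop:fully_discrete_uv} are inherited without degradation. A secondary bookkeeping issue is the transfer of the regularity hypothesis from $\psi$ to $(u,v)$ through \eqref{eqn:swap_dependency_u_psi}, where the derivative in the definition of $v$ means the $\Sigma^{k+4}$-bound on $\psi$ should be read as supplying whatever (possibly one order higher) $u,v$-regularity Proposition~\ref{prop:fully_discrete_uv} requires.
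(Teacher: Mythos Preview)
Your proposal is correct and follows essentially the same approach as the paper: both proofs split $\|\psi_M^n-\psi(t_n)\|_{L^2}$ into a gauge-mismatch term (A) and the bare difference $\|u_M^n-u(t_n)\|_{L^2}$ (B), bound (B) by Proposition~\ref{prop:fully_discrete_uv}, and handle (A) via the pointwise estimate \eqref{eqn:central_estimate_for_twisting_back} combined again with Proposition~\ref{prop:fully_discrete_uv}. You are in fact slightly more careful than the paper in spelling out the uniform control of $\|u_M^n\|_{L^2}$ and the regularity transfer from $\psi$ to $(u,v)$.
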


\begin{remark}
    Crucially, Theorem~\ref{thm:fully-discrete_global_error_DNLSE} does not impose any CFL-type constraints on $M,\tau$, meaning the scheme is truly unconditionally stable.
\end{remark}
\begin{proof}[Proof of Theorem~\ref{thm:fully-discrete_global_error_DNLSE}]
We proceed similarly to the proof of Theorem~\ref{thm:semi-discrete_global_error_DNLSE}:
\begin{align*}
    \|\psi_M^n-\psi(t_n)\|_{L^2}\leq \underbrace{\left\|e^{i\delta\int_{-\infty}^{x}\georg{\mathcal{Q}_M}(|u^n_{M}(y)|^2)-|u(t_n,y)|^2dy}u^n_{M}\right\|_{L^{2}}}_{\text{(A)}}+\underbrace{\|u^n_{M}-u(t_n)\|_{L^2}}_{\text{(B)}}
\end{align*}
Proposition~\ref{prop:fully_discrete_uv} ensures that\vspace{-0.5cm}
\begin{align*}
    \text{(B)}\leq C\left(\tau^2+M^{1+\frac{1}{3}-\frac{k}{2}}\right)
\end{align*}
for a constant $C>0$ depending on $M_{k+5}$ (using the relationship between $\|u\|_{\Sigma^k},\|v\|_{\Sigma^k}$ and $\|\psi\|_{\Sigma^{k+1}}$ established in the proof of Theorem~\ref{thm:semi-discrete_global_error_DNLSE}). \georg{For (A), similarly to \eqref{eqn:L2estimate_twist},} we combine the estimate \eqref{eqn:central_estimate_for_twisting_back}, Lemma~\ref{lem:Hermite_interpolation} and Proposition~\ref{prop:fully_discrete_uv} to show that also\vspace{-0.5cm}
\begin{align*}
    \text{(A)}\leq C\left(\tau^2+M^{1+\frac{1}{3}-\frac{k}{2}}\right)
\end{align*}
for a constant $C>0$ depending on $M_{k+5}$. This completes the proof.
\end{proof}
\section{Numerical examples}\label{sec:numerical_examples}
In the following we provide numerical examples supporting our theoretical analysis presented in \S\ref{sec:standard_cubic_NLSE} and \S\ref{sec:DNLSE}.
\subsection{NLSE without potential}
As a first numerical example we look at a standard cubic NLSE \eqref{eqn:cubic_NLSE} with the initial profile 
\begin{align}\label{eqn:initial_conditions_numerics_DNLSE}
\psi_0(x)
=
e^{i x - \frac{(x-1)^2}{2}}
+
e^{-\frac{(x+2)^2}{4}}.
\end{align}
We consider the solution of this equation with a Fourier spectral method (i.e. \eqref{eqn:cubic_NLSE} posed on $[-L,L]$ with periodic boundary conditions, cf. e.g. \cite{lubich2008splitting}), and a Hermite spectral method as described in Section~\ref{sec:standard_cubic_NLSE}. We fix the number of spatial discretisation modes to $N=512$ in both methods and compare against the a reference solution obtained with $N_{ref}=1024$ modes using the method from Section~\ref{sec:standard_cubic_NLSE}. The results for varying the windowsize $L$ of the Fourier spectral method can be seen in Figure~\ref{fig:Hermite_vs_Fourier_cubicNLSE}, the corresponding evolution of the solution is shown in Figure~\ref{fig:solution_plot_cubicNLSE}. We observe that the Hermite spectral method is, as shown in our analysis, unconditionally stable, and convergent at first order. The same is true, in principle for the Fourier spectral method, with the caveat that the method approximates a periodiced version of the original problem. This means that, as the mass of the solution moves out towards the boundary of the Fourier approximation window, the method incurs an approximation error (seen for example in Figure~\ref{fig:Hermite_vs_Fourier_cubicNLSE_T_3} for $L=10,20$). This can be overcome only by adjusting the width and location of the approximation window of the Fourier spectral method (cf. also \cite{iserles2024convergence} for an adaptive methodology for adjusting this approximation window). By contrast, the Hermite functions form an orthogonal basis on the whole real line, thus the approximation inherently captures the full solution features (so long as sufficiently many coefficients are retained).
\begin{figure}[h!]
		\centering
		\begin{subfigure}{0.5\textwidth}
			\centering
			\includegraphics[width=0.99\linewidth]{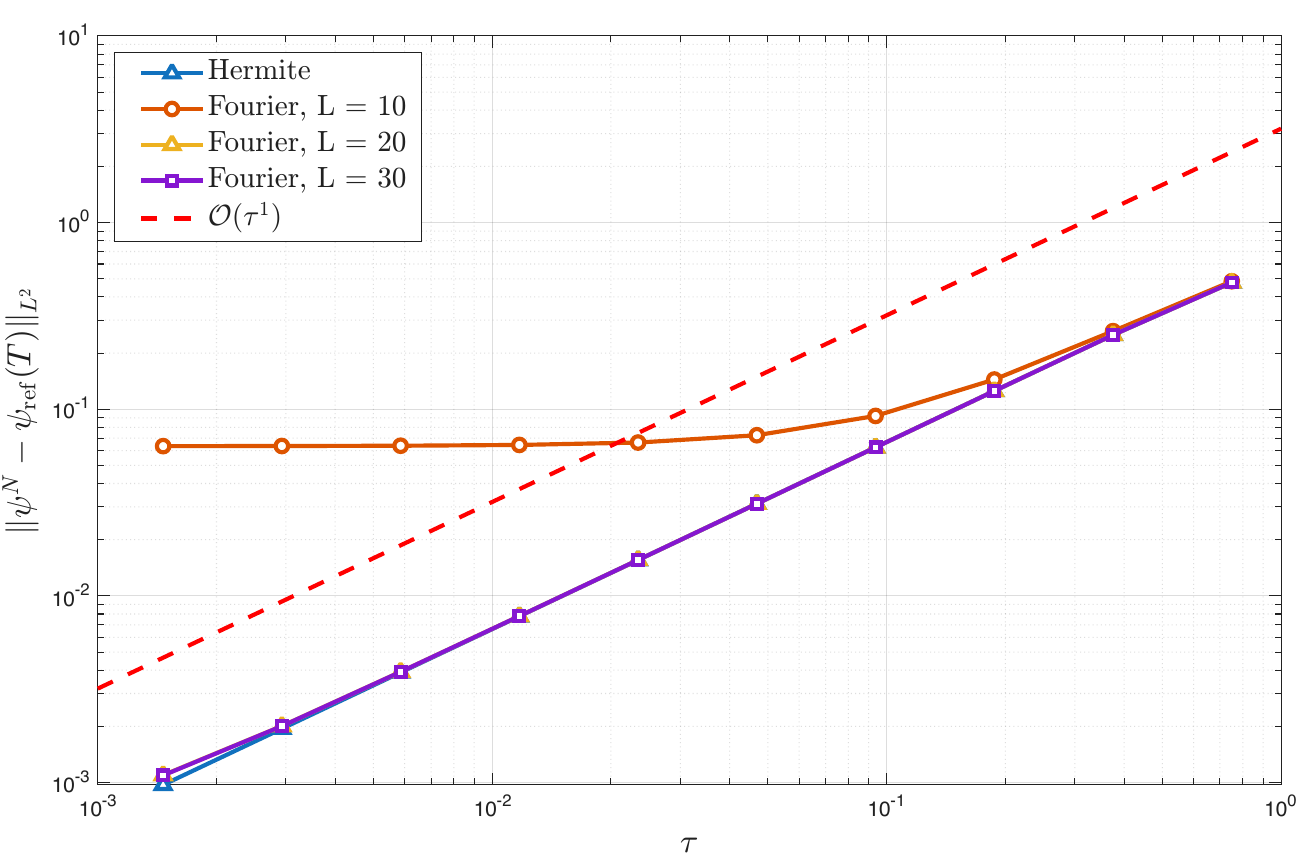}
			\caption{$T=1.5$.}
            \label{fig:Hermite_vs_Fourier_cubicNLSE_T_1-5}
		\end{subfigure}%
		\begin{subfigure}{0.5\textwidth}
		\centering
		\includegraphics[width=0.99\linewidth]{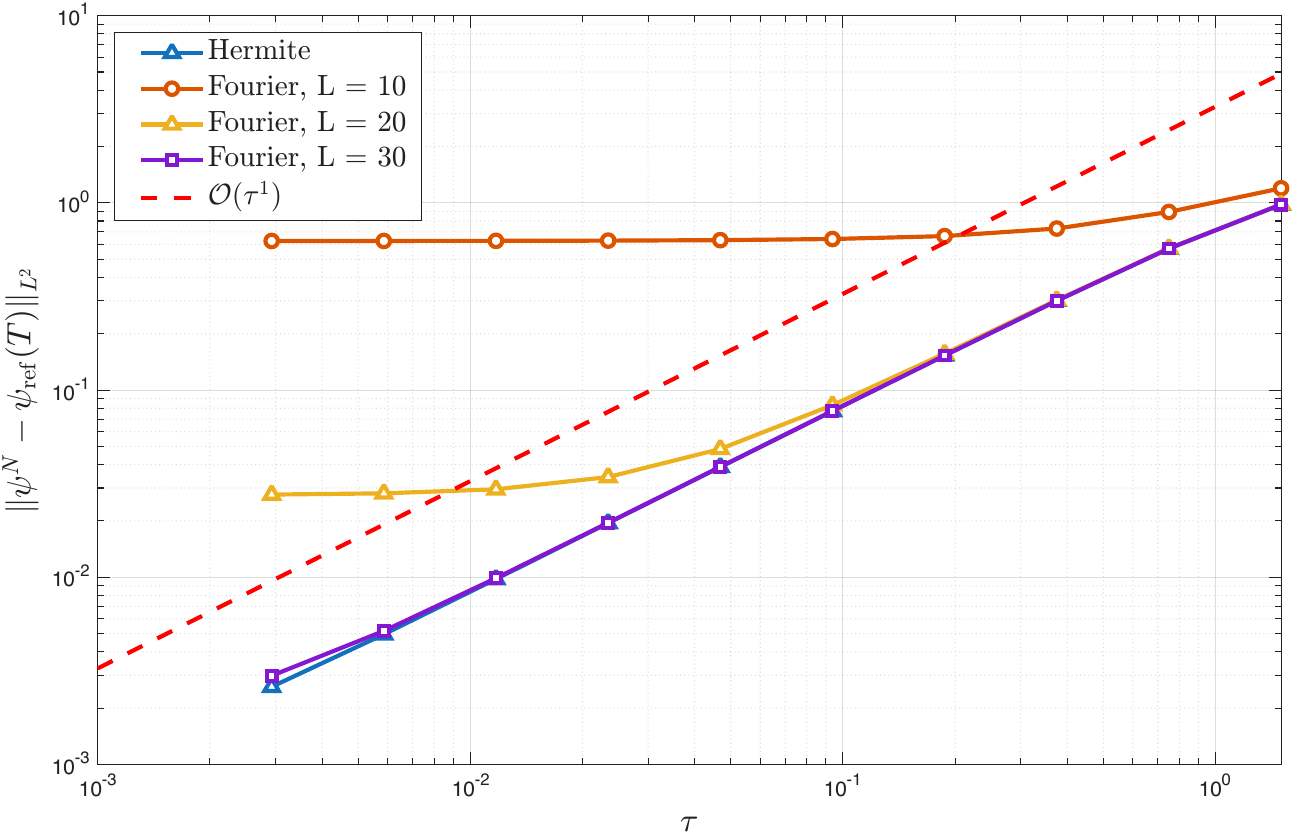}
		\caption{$T=3.0$.}
            \label{fig:Hermite_vs_Fourier_cubicNLSE_T_3}
		\end{subfigure}
		\caption{Errors of the Fourier and Hermite spectral methods with $N=512$ nodes.}
		\label{fig:Hermite_vs_Fourier_cubicNLSE}
\end{figure}

\begin{figure}[h!]
\centering
\includegraphics[width=0.6\textwidth]{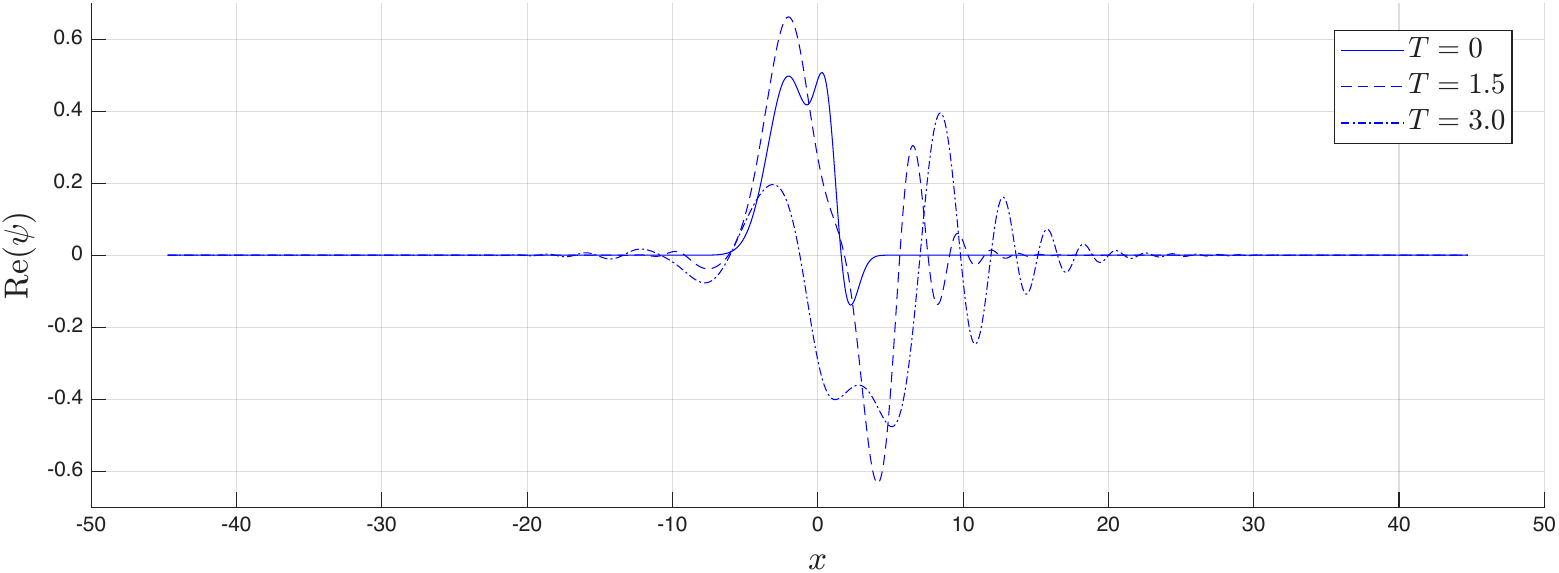}
    \caption{Evolution of solution to \eqref{eqn:cubic_NLSE} with initial condition \eqref{eqn:initial_conditions_numerics_DNLSE}.}
    \label{fig:solution_plot_cubicNLSE}
\end{figure}

\subsection{Simulating the DNLSE}
Next we consider the solution of \eqref{eqn:DNLSE} with the initial condition \eqref{eqn:initial_conditions_numerics_DNLSE} and $\delta=1$. In the following experiments we used a direct Runge--Kutta method (RK4) to compute a reference solution using $M_{ref}=1000$ Hermite modes and $\tau_{ref}=10^{-4}$. We evaluate our new scheme presented in \S\ref{sec:stable_algorithm_DNLSE} (denoted by ``R-transform'' in the graphs) against the state-of-the-art Crank--Nicholson scheme presented in \cite{XUE2024134372}, denoted by ``Direct'' in the following graphs. The evolution of the solution profile can be seen in Figure~\ref{fig:evolution_solution_profile}. We note that the Crank--Nicholson scheme leads to instability after short simulation times, while our novel method is able to resolve the solution dynamics stably for longer time intervals (and without CFL-constraints).

\begin{figure}[h!]
    \centering
    \begin{subfigure}{0.5\textwidth}
        \centering
        \includegraphics[width=0.99\linewidth]{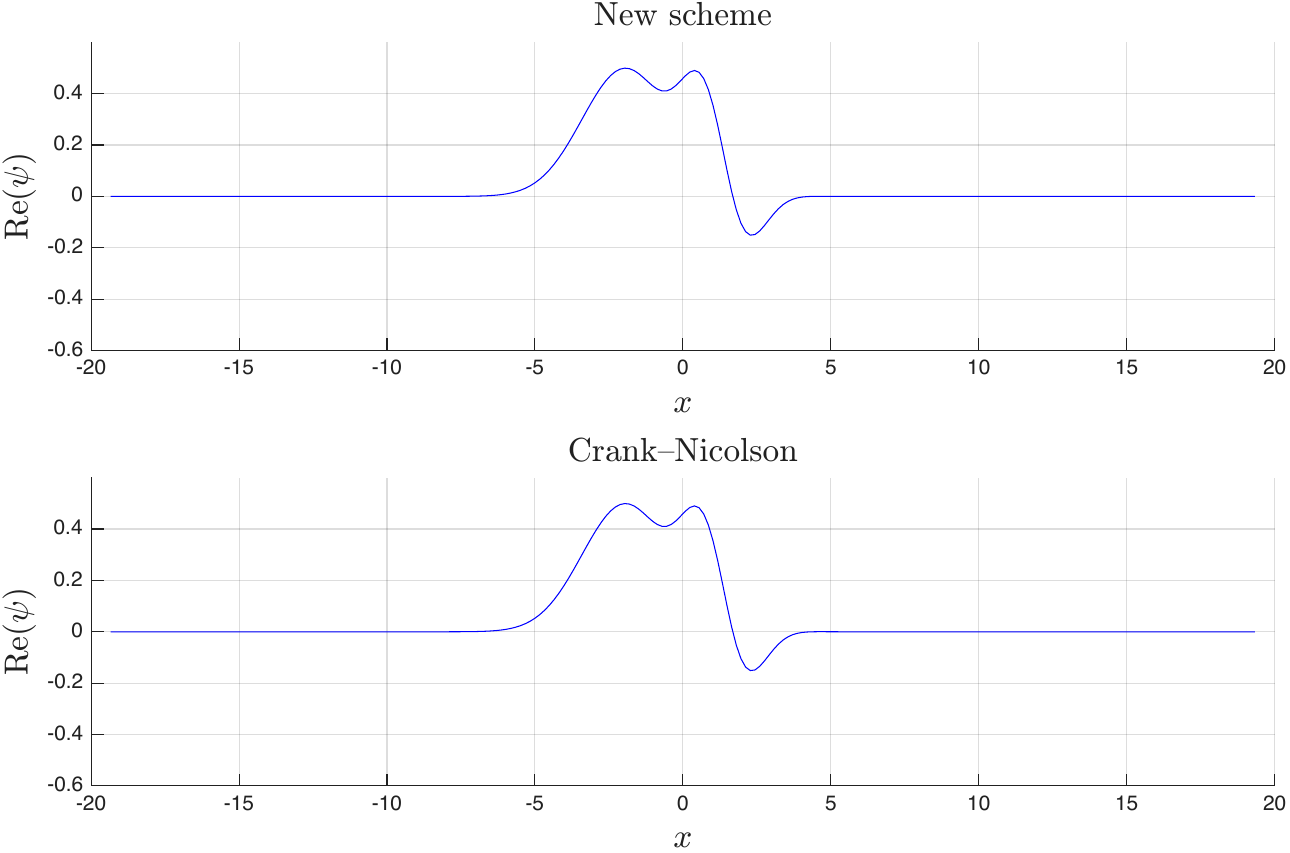}
        \caption{$t=0$.}
    \end{subfigure}%
    \begin{subfigure}{0.5\textwidth}
    \centering
    \includegraphics[width=0.99\linewidth]{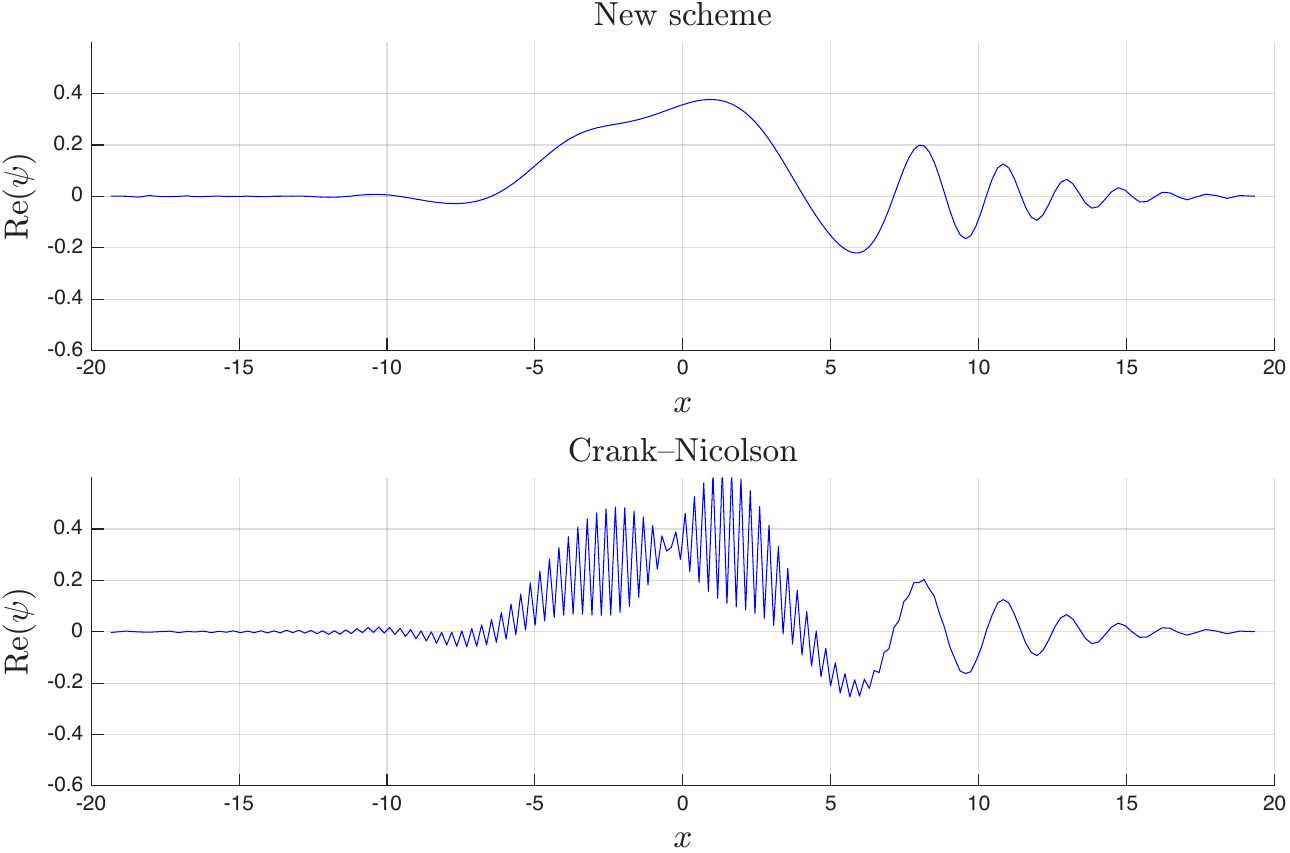}
       \caption{$t=1.8375$.}
    \end{subfigure}
    \caption{Evolution of the solution profile \eqref{eqn:initial_conditions_numerics_DNLSE} with $M=200,\tau=0.0075$.}
    \label{fig:evolution_solution_profile}
\end{figure}

To evaluate the numerical performance of our novel method more thoroughly we can now consider the following convergence plots, Figures~\ref{fig:error_200_0-1}-\ref{fig:error_500_1-0}. We note that the Crank--Nicholson method requires the CFL condition $\tau\lesssim M^{-2}$ for convergence, and at those places where no blue curves with circular markers is present the CN method diverged. From the three graphs the clear advantage of the unconditional stability in our new scheme is apparent. In particular, as the solution progresses over longer times, dispersive effects mean that a larger number of Hermite modes are required to accurately resolve the solution behaviour. This is perfectly fine to do in the unconditionally stable new ``R-transform''-based method, but causes significant problems in the CN method as can be observed in Figure~\ref{fig:error_500_1-0}. An additional marked advantage of the new methodology is that it is fully explicit thus leading to a much more efficient numerical scheme.
\begin{figure}[h!]
    \centering
    \begin{subfigure}{0.5\textwidth}
        \centering
        \includegraphics[width=0.99\linewidth]{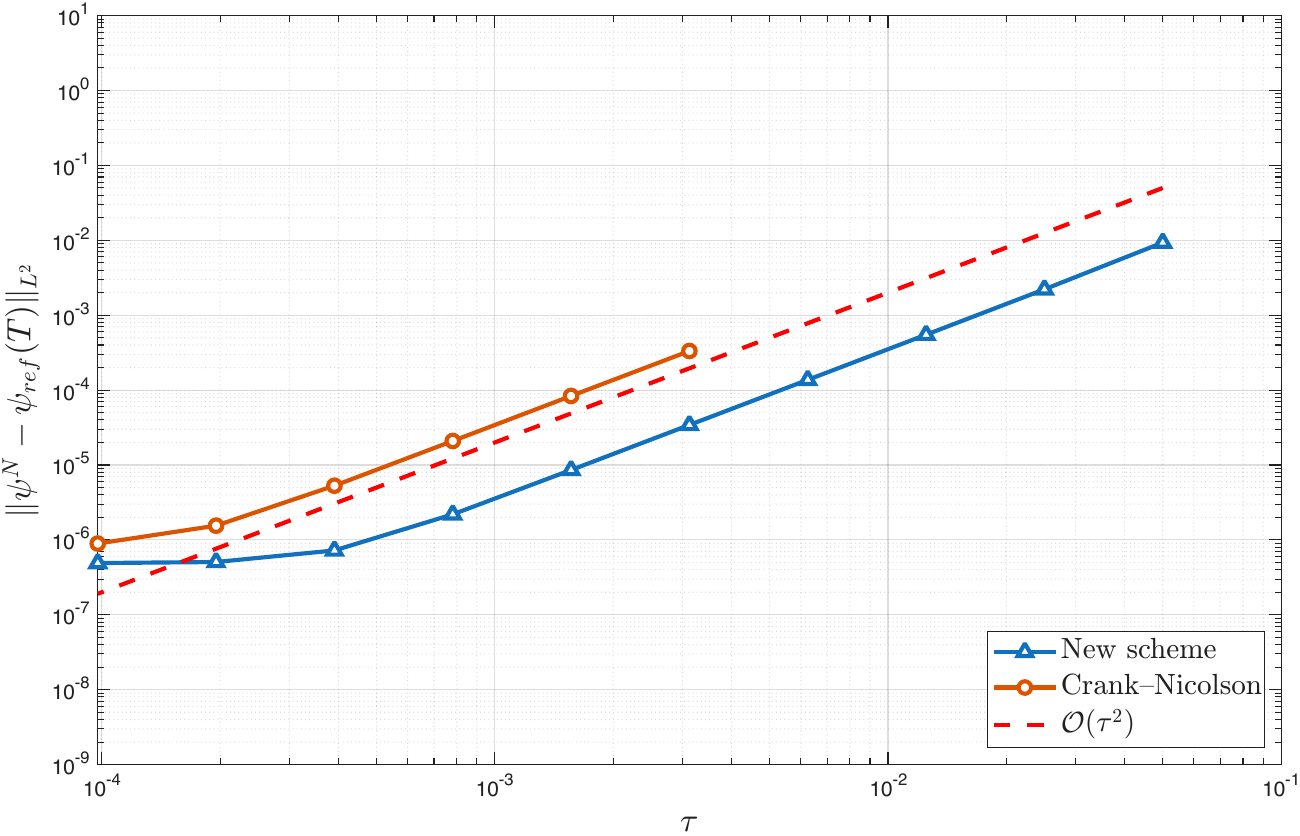}
        \caption{Convergence order.}
        \label{fig:local_error}
    \end{subfigure}%
    \begin{subfigure}{0.5\textwidth}
    \centering
    \includegraphics[width=0.99\linewidth]{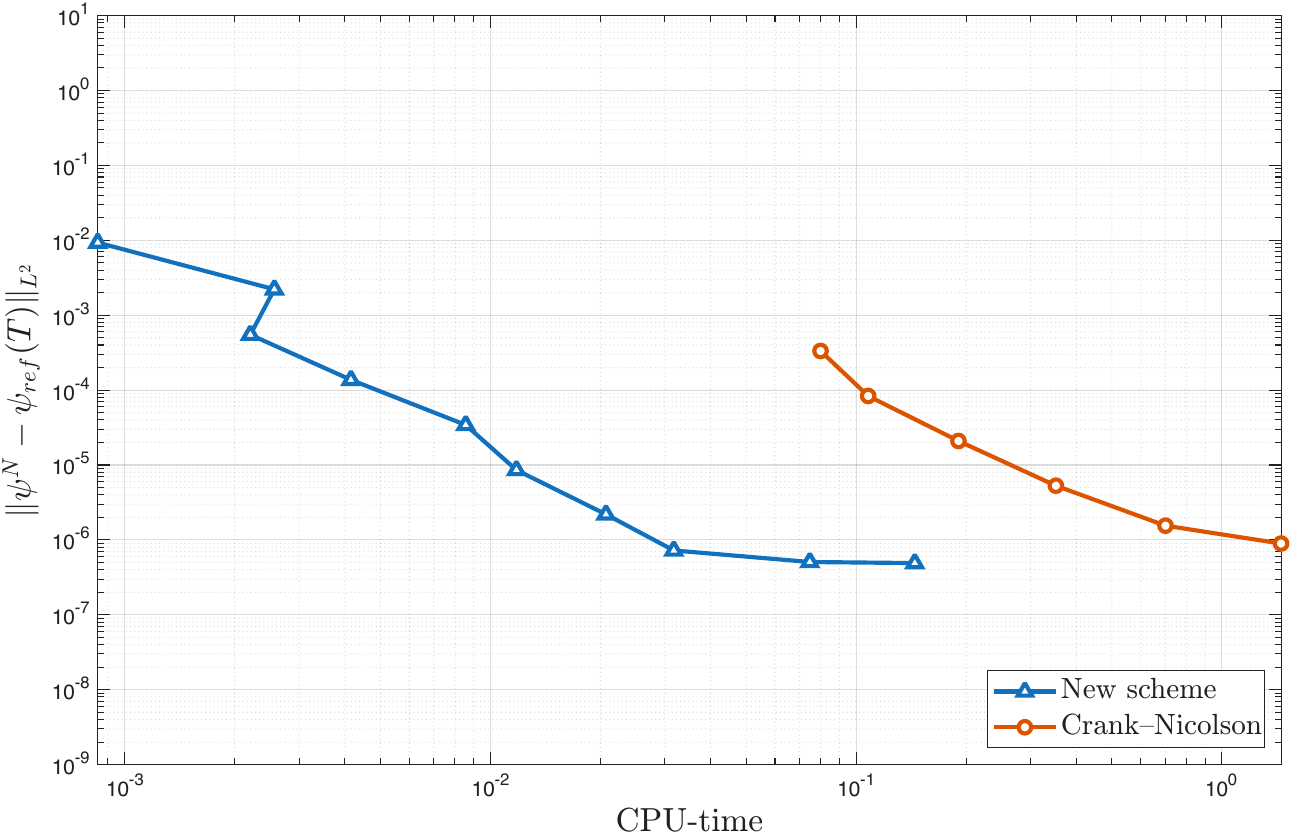}
    \caption{Error vs CPU time.}
    \label{fig:global_error}
    \end{subfigure}
    \caption{Global error of the numerical solution for $T=0.1, N=200$.}
    \label{fig:error_200_0-1}
\end{figure}

\begin{figure}[h!]
    \centering
    \begin{subfigure}{0.5\textwidth}
        \centering
        \includegraphics[width=0.99\linewidth]{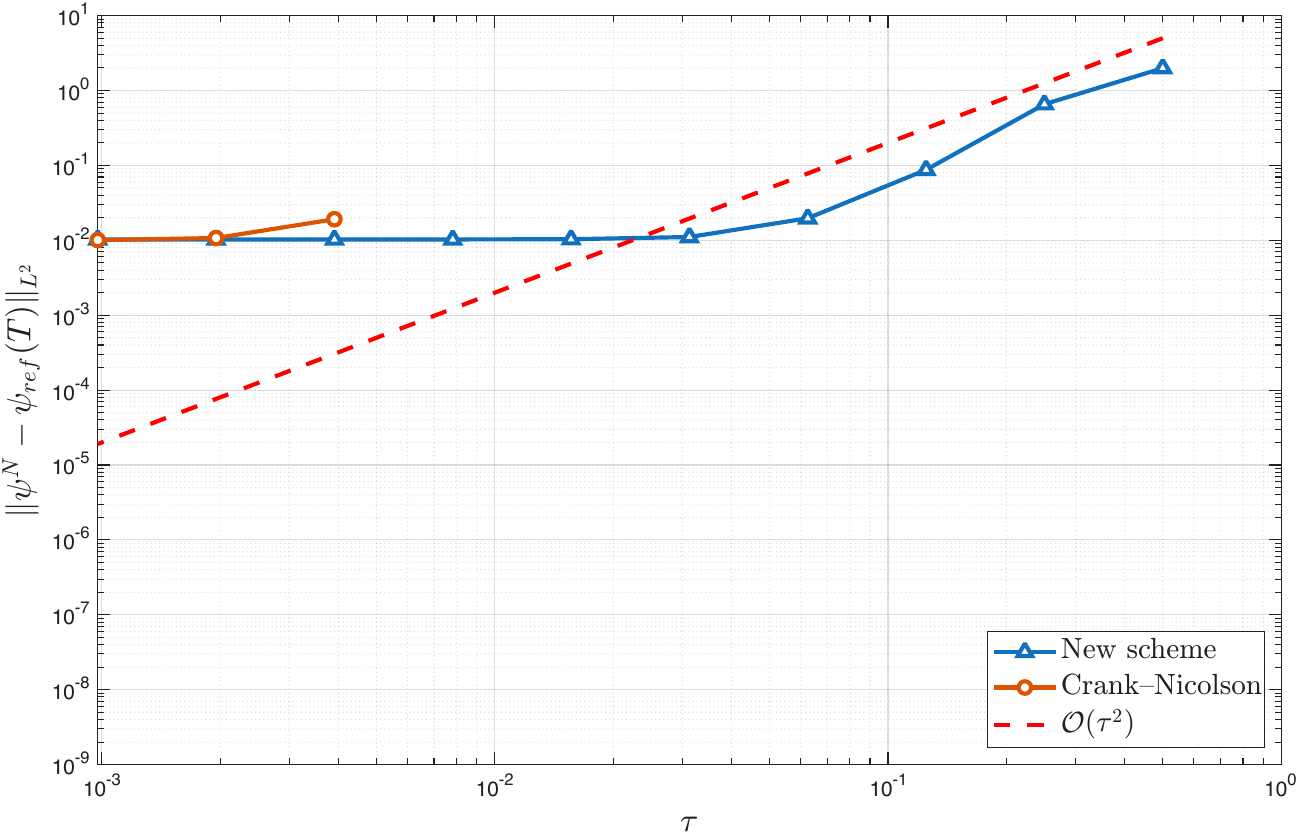}
        \caption{Convergence order.}
        \label{fig:local_error}
    \end{subfigure}%
    \begin{subfigure}{0.5\textwidth}
    \centering
    \includegraphics[width=0.99\linewidth]{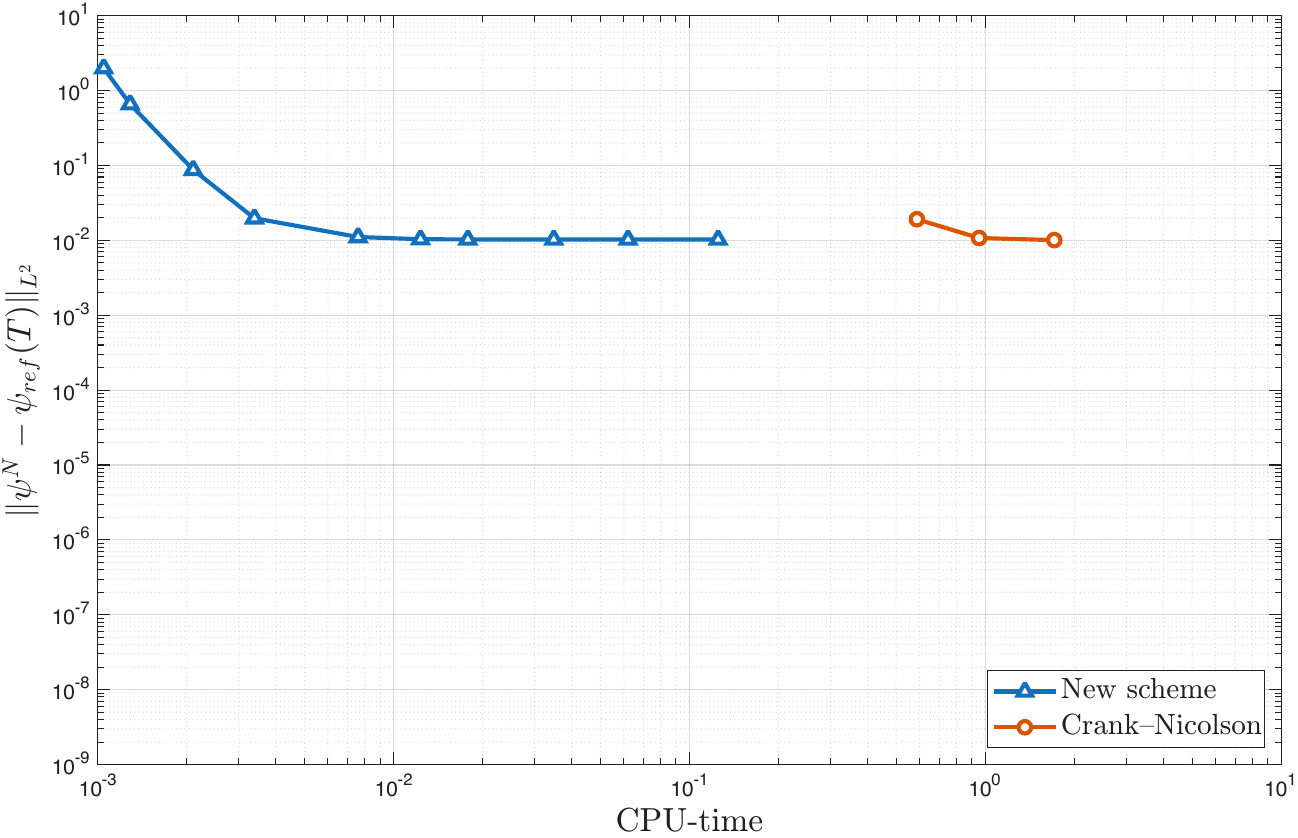}
    \caption{Error vs CPU time.}
    \label{fig:global_error}
    \end{subfigure}
    \caption{Global error of the numerical solution for $T=1.0, N=200$.}
    \label{fig:error_200_1-0}
\end{figure}

\begin{figure}[h!]
    \centering
    \begin{subfigure}{0.5\textwidth}
        \centering
        \includegraphics[width=0.99\linewidth]{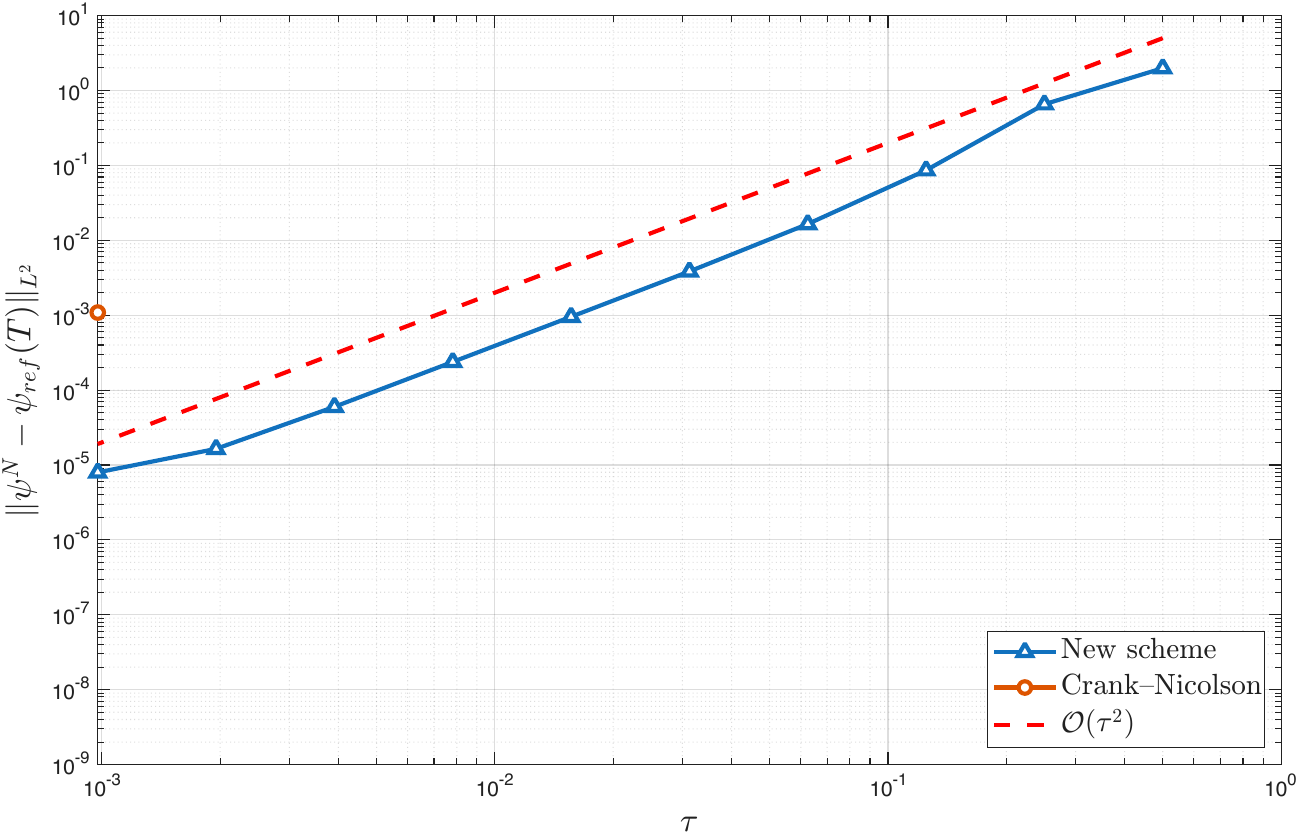}
        \caption{Convergence order.}
        \label{fig:local_error}
    \end{subfigure}%
    \begin{subfigure}{0.5\textwidth}
    \centering
    \includegraphics[width=0.99\linewidth]{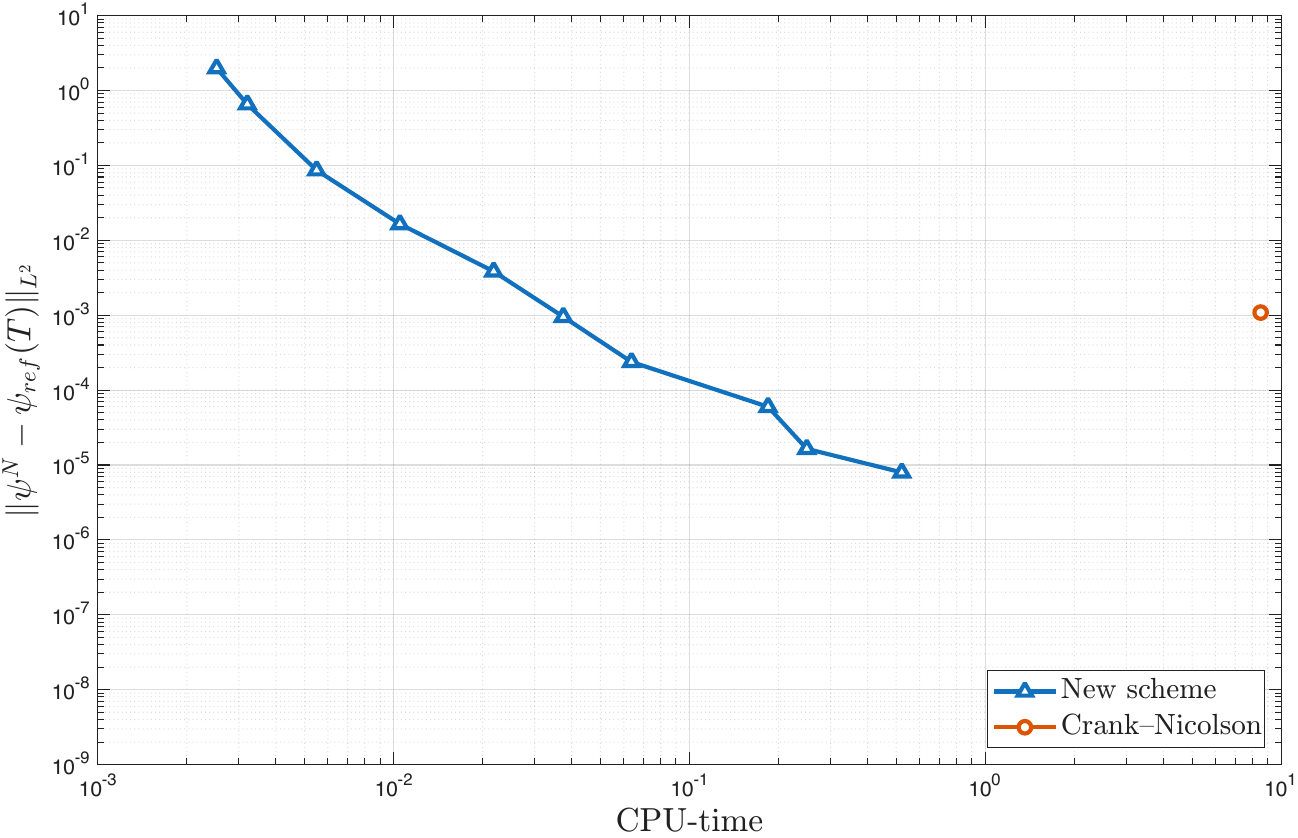}
    \caption{Error vs CPU time.}
    \label{fig:global_error}
    \end{subfigure}
    \caption{Global error of the numerical solution for $T=1.0, N=500$.}
    \label{fig:error_500_1-0}
\end{figure}

\section{Conclusions}
In this work we revisited Hermite spectral methods for nonlinear Schr\"odinger equations posed on unbounded domains and demonstrated that their usefulness is not restricted to problems with a harmonic trap. The key point facilitating this is that, although Hermite functions diagonalise the harmonic oscillator rather than the Laplacian, the free Schr\"odinger flow remains stable in the weighted Sobolev spaces $\Sigma^k$ that naturally arise in Hermite expansions. This provides a rigorous foundation for Hermite-based time-splitting methods for dispersive equations on unbounded domains without imposing artificial periodicity or domain truncation.

We presented two applications of this work - a Hermite-spectral splitting method for the cubic nonlinear Schr\"odinger equation and, based on the novel use of a gauge transform, an unconditionally stable method for the derivative nonlinear Schr\"odinger equation. We provide a fully discrete convergence analysis for both use cases and support our theoretical analysis with practical numerical experiments. Overall, the results of this work suggest that Hermite spectral methods provide a robust and efficient alternative for nonlinear dispersive equations on unbounded domains, even in the absence of confining potentials.

\section*{Acknowledgements}
The authors would like to thank Prof.\ Arieh Iserles (University of Cambridge) for several exciting discussions on Hermite expansions and insightful comments on a draft of this manuscript. The authors gratefully acknowledge funding from the European Research Council (ERC) under the European Union's Horizon 2020 research and innovation programme (grant agreement No.\ 850941). GM gratefully acknowledges support through a Scientific High Level Visiting Fellowship funded by the Higher Education, Research and Innovation Department of the Embassy of France in the UK.

\bibliographystyle{siam}     
\bibliography{biblio}

@misc{QuantumTimeSteppers,
	author = {Webb, Marcus},
	howpublished = {\url{https://github.com/marcusdavidwebb/QuantumTimeSteppers.jl}},
	note = {Accessed on 07/05/2025},
	title = {QuantumTimeSteppers.jl},
	year = {2025}
}

@article{faou2016weakly,
  title={{The weakly nonlinear large-box limit of the 2D cubic nonlinear Schr{\"o}dinger equation}},
  author={Faou, Erwan and Germain, Pierre and Hani, Zaher},
  journal={Journal of the American Mathematical Society},
  volume={29},
  number={4},
  pages={915--982},
  year={2016}
}

@article{carles2025scattering,
  title={{On scattering for NLS: rigidity properties and numerical simulations via the lens transform}},
  author={Carles, R{\'e}mi and Maierhofer, Georg},
  journal={To appear in Mathematical Models and Methods in Applied Sciences},
  year={2026}
}

@article{banica2024numerical,
  title={{Numerical integration of Schr{\"o}dinger maps via the Hasimoto transform}},
  author={Banica, Valeria and Maierhofer, Georg and Schratz, Katharina},
  journal={SIAM Journal on Numerical Analysis},
  volume={62},
  number={1},
  pages={322--352},
  year={2024},
  publisher={SIAM}
}

@article{ostermann2022error,
  title={{Error estimates at low regularity of splitting schemes for NLS}},
  author={Ostermann, Alexander and Rousset, Fr{\'e}d{\'e}ric and Schratz, Katharina},
  journal={Mathematics of Computation},
  volume={91},
  number={333},
  pages={169--182},
  year={2022}
}

@article{mjolhus1976modulational,
  title={On the modulational instability of hydromagnetic waves parallel to the magnetic field},
  author={Mj{\o}lhus, Einar},
  journal={Journal of Plasma Physics},
  volume={16},
  number={3},
  pages={321--334},
  year={1976},
  publisher={Cambridge University Press}
}

@article{mio1976modified,
  title={{Modified nonlinear Schr{\"o}dinger equation for Alfv{\'e}n waves propagating along the magnetic field in cold plasmas}},
  author={Mio, Koji and Ogino, Tatsuki and Minami, Kazuo and Takeda, Susumu},
  journal={Journal of the Physical Society of Japan},
  volume={41},
  number={1},
  pages={265--271},
  year={1976},
  publisher={The Physical Society of Japan}
}

@article{hayashi1994modified,
  title={{Modified wave operators for the derivative nonlinear Schr{\"o}dinger equation}},
  author={Hayashi, Nakao and Ozawa, Tohru},
  journal={Mathematische Annalen},
  volume={298},
  number={1},
  pages={557--576},
  year={1994},
  publisher={Springer New York}
}

@article {MR4448993,
    AUTHOR = {Bahouri, Hajer and Perelman, Galina},
     TITLE = {Global well-posedness for the derivative nonlinear
              {S}chr\"odinger equation},
   JOURNAL = {Invent. Math.},
  FJOURNAL = {Inventiones Mathematicae},
    VOLUME = {229},
      YEAR = {2022},
    NUMBER = {2},
     PAGES = {639--688},
      ISSN = {0020-9910,1432-1297},
   MRCLASS = {35Q55},
  MRNUMBER = {4448993},
MRREVIEWER = {Marco\ Gipo\ Ghimenti},
       DOI = {10.1007/s00222-022-01113-0},
       URL = {https://doi.org/10.1007/s00222-022-01113-0},
}

@article {MR4628747,
    AUTHOR = {Killip, Rowan and Ntekoume, Maria and Vi\c{s}an, Monica},
     TITLE = {On the well-posedness problem for the derivative nonlinear
              {S}chr\"odinger equation},
   JOURNAL = {Anal. PDE},
  FJOURNAL = {Analysis \& PDE},
    VOLUME = {16},
      YEAR = {2023},
    NUMBER = {5},
     PAGES = {1245--1270},
      ISSN = {2157-5045,1948-206X},
   MRCLASS = {35Q55},
  MRNUMBER = {4628747},
MRREVIEWER = {Jing\ Bo\ Chen},
       DOI = {10.2140/apde.2023.16.1245},
       URL = {https://doi.org/10.2140/apde.2023.16.1245},
}

@article {MR1950826,
    AUTHOR = {Colliander, J. and Keel, M. and Staffilani, G. and Takaoka, H.
              and Tao, T.},
     TITLE = {A refined global well-posedness result for {S}chr\"odinger
              equations with derivative},
   JOURNAL = {SIAM J. Math. Anal.},
  FJOURNAL = {SIAM Journal on Mathematical Analysis},
    VOLUME = {34},
      YEAR = {2002},
    NUMBER = {1},
     PAGES = {64--86},
      ISSN = {0036-1410,1095-7154},
   MRCLASS = {35Q55 (35B30 37K10)},
  MRNUMBER = {1950826},
MRREVIEWER = {J\"urgen\ Scheurle},
       DOI = {10.1137/S0036141001394541},
       URL = {https://doi.org/10.1137/S0036141001394541},
}

@article {MR1871414,
    AUTHOR = {Colliander, J. and Keel, M. and Staffilani, G. and Takaoka, H.
              and Tao, T.},
     TITLE = {Global well-posedness for {S}chr\"odinger equations with
              derivative},
   JOURNAL = {SIAM J. Math. Anal.},
  FJOURNAL = {SIAM Journal on Mathematical Analysis},
    VOLUME = {33},
      YEAR = {2001},
    NUMBER = {3},
     PAGES = {649--669},
      ISSN = {0036-1410,1095-7154},
   MRCLASS = {35Q55 (35B35)},
  MRNUMBER = {1871414},
MRREVIEWER = {Olivier\ J.\ Goubet},
       DOI = {10.1137/S0036141001384387},
       URL = {https://doi.org/10.1137/S0036141001384387},
}

@article{Hille1939HermitianSeries,
  author  = {Hille, Einar},
  title   = {{Contributions to the theory of Hermitian series}},
  journal = {Duke Mathematical Journal},
  volume  = {5},
  number  = {4},
  year    = {1939},
  pages   = {875--936},
  doi     = {10.1215/S0012-7094-39-00572-7},
  url     = {https://projecteuclid.org/journals/duke-mathematical-journal/volume-5/issue-4/Contributions-to-the-theory-of-Hermitian-series/10.1215/S0012-7094-39-00572-7.full}
}

@article{LEIBON2008211,
title = {A fast Hermite transform},
journal = {Theoretical Computer Science},
volume = {409},
number = {2},
pages = {211--228},
year = {2008},
author = {Gregory Leibon and Daniel N. Rockmore and Wooram Park and Robert Taintor and Gregory S. Chirikjian},
}

@article{Lasser_Lubich_2020,
title={Computing quantum dynamics in the semiclassical regime}, volume={29},
journal={Acta Numerica},
author={Lasser, Caroline and Lubich, Christian},
year={2020},
pages={229–401}}

@article{Hille1940HermitianSeriesII,
  author  = {Hille, Einar},
  title   = {{Contributions to the theory of Hermitian series. {II}. The representation problem}},
  journal = {Transactions of the American Mathematical Society},
  volume  = {47},
  number  = {1},
  year    = {1940},
  pages   = {80--94},
  doi     = {10.1090/S0002-9947-1940-0000871-3}
}

@article{BOYD1984382,
title = {{Asymptotic coefficients of Hermite function series}},
journal = {Journal of Computational Physics},
volume = {54},
number = {3},
pages = {382--410},
year = {1984},
author = {John P Boyd},
}

@article{koch2013error,
  title={{Error analysis of high-order splitting methodsfor nonlinear evolutionary Schr{\"o}dinger equationsand application to the MCTDHF equationsin electron dynamics}},
  author={Koch, Othmar and Neuhauser, Christof and Thalhammer, Mechthild},
  journal={ESAIM: Mathematical Modelling and Numerical Analysis},
  volume={47},
  number={5},
  pages={1265--1286},
  year={2013},
  publisher={EDP Sciences}
}

@article {MR1406687,
    AUTHOR = {Ozawa, T.},
     TITLE = {On the nonlinear {S}chr\"odinger equations of derivative type},
   JOURNAL = {Indiana Univ. Math. J.},
  FJOURNAL = {Indiana University Mathematics Journal},
    VOLUME = {45},
      YEAR = {1996},
    NUMBER = {1},
     PAGES = {137--163},
      ISSN = {0022-2518,1943-5258},
   MRCLASS = {35Q55},
  MRNUMBER = {1406687},
MRREVIEWER = {B.\ M.\ Herbst},
       DOI = {10.1512/iumj.1996.45.1962},
       URL = {https://doi.org/10.1512/iumj.1996.45.1962},
}

@article {MR1693278,
    AUTHOR = {Takaoka, Hideo},
     TITLE = {Well-posedness for the one-dimensional nonlinear
              {S}chr\"odinger equation with the derivative nonlinearity},
   JOURNAL = {Adv. Differential Equations},
  FJOURNAL = {Advances in Differential Equations},
    VOLUME = {4},
      YEAR = {1999},
    NUMBER = {4},
     PAGES = {561--580},
      ISSN = {1079-9389},
   MRCLASS = {35Q55},
  MRNUMBER = {1693278},
MRREVIEWER = {Tohru\ Ozawa},
}

@article{ginibre1979sigma,
  title={{On a class of nonlinear Schr{\"o}dinger equations. {II}. Scattering theory, general case}},
  author={Jean Ginibre and Giorgio Velo},
  journal={Journal of Functional Analysis},
  year={1979},
  volume={32},
  pages={33-71},
  url={https://api.semanticscholar.org/CorpusID:119970411}
}

@article{faou2025fully,
  title={Fully discrete backward error analysis for the midpoint rule applied to the nonlinear Schr\"odinger equation},
  author={Faou, Erwan and Maierhofer, Georg and Schratz, Katharina},
  journal={arXiv preprint arXiv:2505.03271},
  year={2025}
}

@inproceedings{malmquist1925determination,
  title={Sur la d{\'e}termination d’une classe de fonctions analytiques par leurs valeurs dans un ensemble donn{\'e} de points},
  author={Malmquist, Folke},
  booktitle={Comptes Rendus du Sixieme Congres},
  volume={26},
  pages={253--259},
  year={1925}
}

@inproceedings{takenaka1925orthogonal,
  title={On the orthogonal functions and a new formula of interpolation},
  author={Takenaka, Satoru},
  booktitle={Japanese journal of mathematics: transactions and abstracts},
  volume={2},
  pages={129--145},
  year={1925},
  organization={The Mathematical Society of Japan}
}

@article{iserles2021solving,
  title={{Solving the linear semiclassical Schr\"odinger equation on the real line}},
  author={Iserles, Arieh and Kropielnicka, Karolina and Schratz, Katharina and Webb, Marcus},
  journal={arXiv preprint arXiv:2102.00413},
  year={2021}
}

@book{trefethen2000,
author = {Trefethen, Lloyd N.},
title = {{Spectral Methods in MATLAB}},
publisher = {Society for Industrial and Applied Mathematics},
year = {2000},
doi = {10.1137/1.9780898719598},
address = {},
edition   = {},
URL = {https://epubs.siam.org/doi/abs/10.1137/1.9780898719598},
eprint = {https://epubs.siam.org/doi/pdf/10.1137/1.9780898719598}
}

@article{iserles2020family,
  title={{A family of orthogonal rational functions and other orthogonal systems with a skew-Hermitian differentiation matrix}},
  author={Iserles, Arieh and Webb, Marcus},
  journal={Journal of Fourier Analysis and Applications},
  volume={26},
  number={1},
  pages={19},
  year={2020},
  publisher={Springer}
}

@article{iserles2023approximation,
  title={Approximation of wave packets on the real line},
  author={Iserles, Arieh and Luong, Karen and Webb, Marcus},
  journal={Constructive Approximation},
  volume={58},
  number={1},
  pages={199--250},
  year={2023},
  publisher={Springer}
}

@phdthesis{luong2023approximation,
title={Approximation of wave packets on the whole real line},
  author={Luong, Karen Min},
  year={2023},
  school={University of Cambridge}
}

@article{markowich1999numerical,
  title={{Numerical approximation of quadratic observables of Schr{\"o}dinger-type equations in the semi-classical limit}},
  author={Markowich, Peter A and Pietra, Paola and Pohl, Carsten},
  journal={Numerische Mathematik},
  volume={81},
  number={4},
  pages={595--630},
  year={1999},
  publisher={Springer}
}

@article{Jin_Markowich_Sparber_2011,
title={{Mathematical and computational methods for semiclassical Schrödinger equations}},
volume={20},
DOI={10.1017/S0962492911000031},
journal={Acta Numerica},
author={Jin, Shi and Markowich, Peter and Sparber, Christof},
year={2011},
pages={121--209}}

@book{iserles2009first,
  title={A first course in the numerical analysis of differential equations},
  author={Iserles, Arieh},
  edition={2nd},
  year={2009},
  publisher={Cambridge university press}
}

@book{gottlieb1977numerical,
  title={{Numerical analysis of spectral methods: theory and applications}},
  author={Gottlieb, David and Orszag, Steven A},
  year={1977},
  publisher={SIAM}
}

@article{bao2003numerical,
  title={{Numerical study of time-splitting spectral discretizations of nonlinear Schr{\"o}dinger equations in the semiclassical regimes}},
  author={Bao, Weizhu and Jin, Shi and Markowich, Peter A},
  journal={SIAM Journal on Scientific Computing},
  volume={25},
  number={1},
  pages={27--64},
  year={2003},
  publisher={SIAM}
}

@article{bao2002time,
  title={{On time-splitting spectral approximations for the Schr{\"o}dinger equation in the semiclassical regime}},
  author={Bao, Weizhu and Jin, Shi and Markowich, Peter A},
  journal={Journal of Computational Physics},
  volume={175},
  number={2},
  pages={487--524},
  year={2002},
  publisher={Elsevier}
}

@misc{NIST:DLMF,
	howpublished = {\url{https://dlmf.nist.gov/}, Release 1.2.4 of 2025-03-15},
	key = {{\relax DLMF}},
	note = {F.~W.~J. Olver, A.~B. {Olde Daalhuis}, D.~W. Lozier, B.~I. Schneider, R.~F. Boisvert, C.~W. Clark, B.~R. Miller, B.~V. Saunders, H.~S. Cohl, and M.~A. McClain, eds.},
	title = {{\it{NIST Digital Library of Mathematical Functions}}},
	url = {https://dlmf.nist.gov/}
}

@misc{FastGaussQuadrature,
	author = {Townsend, A. and Olver, S. and others},
	howpublished = {\url{https://github.com/JuliaApproximation/FastGaussQuadrature.jl}},
	note = {Version 1.0.2},
	title = {FastGaussQuadrature.jl},
	year = {2024}
}

@misc{FastTransforms,
	author = {Slevinsky, R. M. and Olver, S. and others},
	howpublished = {\url{https://github.com/JuliaApproximation/FastTransforms.jl}},
	note = {Version 0.17.0},
	title = {FastTransforms.jl},
	year = {2025}
}

@article{hayashi1993initial,
  title={{The initial value problem for the derivative nonlinear Schr{\"o}dinger equation in the energy space}},
  author={Hayashi, Nakao},
  journal={Nonlinear analysis},
  volume={20},
  number={7},
  pages={823--833},
  year={1993},
  publisher={Elsevier Ltd}
}

@article{kundu1984landau,
  title={{Landau--Lifshitz and higher-order nonlinear systems gauge generated from nonlinear Schr{\"o}dinger-type equations}},
  author={Kundu, Anjan},
  journal={Journal of mathematical physics},
  volume={25},
  number={12},
  pages={3433--3438},
  year={1984},
  publisher={American Institute of Physics}
}

@article{herr2006cauchy,
  title={{On the Cauchy problem for the derivative nonlinear Schr{\"o}dinger equation with periodic boundary condition}},
  author={Herr, Sebastian},
  journal={International Mathematics Research Notices},
  volume={2006},
  number={9},
  pages={96763--96763},
  year={2006},
  publisher={OUP}
}

@article{hayashi1992derivative,
  title={{On the derivative nonlinear Schr{\"o}dinger equation}},
  author={Hayashi, Nakao and Ozawa, Tohru},
  journal={Physica D: Nonlinear Phenomena},
  volume={55},
  number={1-2},
  pages={14--36},
  year={1992},
  publisher={Elsevier}
}

@article{bao2003anumerical,
  title={{Numerical solution of the Gross--Pitaevskii equation for Bose--Einstein condensation}},
  author={Bao, Weizhu and Jaksch, Dieter and Markowich, Peter A},
  journal={Journal of Computational Physics},
  volume={187},
  number={1},
  pages={318--342},
  year={2003},
  publisher={Elsevier}
}

@Article{Thalhammer2012,
	author     = {Thalhammer, Mechthild},
	title      = {Convergence analysis of high-order time-splitting pseudospectral methods for nonlinear {S}chr\"{o}dinger equations},
	journal    = {SIAM J. Numer. Anal.},
	year       = {2012},
	volume     = {50},
	number     = {6},
	pages      = {3231--3258},
	issn       = {0036-1429},
	bdsk-url-1 = {https://doi.org/10.1137/120866373},
	doi        = {10.1137/120866373},
	fjournal   = {SIAM Journal on Numerical Analysis},
	mrclass    = {65L60 (65L70)},
	mrnumber   = {3022261},
	mrreviewer = {Rossana Vermiglio},
	url        = {https://doi.org/10.1137/120866373},
}

@article{BCM08,
	author = {{Ben Abdallah}, N. and Castella, F. and Méhats, F.},
	date-added = {2008-12-11 16:43:43 +0100},
	date-modified = {2008-12-11 16:45:45 +0100},
	file = {:Documents/Schrodinger/Casto/BCM.pdf:},
	journal = {J. Differential Equations},
	number = {1},
	pages = {154–200},
	title = {Time averaging for the strongly confined nonlinear {S}chrödinger equation, using almost periodicity},
	volume = {245},
	year = {2008}
}

@book{Helffer1984,
	author = {Helffer, Bernard},
	mrclass = {35S05 (35F10 35P20 47F05 47G05 58G15)},
	mrnumber = {743094},
	mrreviewer = {Gerd Grubb},
	note = {With an English summary},
	pages = {ix+197},
	publisher = {Société Mathématique de France, Paris},
	series = {Astérisque},
	title = {Théorie spectrale pour des opérateurs globalement elliptiques},
	volume = {112},
	year = {1984}
}

@Article{ThCaNe2009,
	author     = {Thalhammer, Mechthild and Caliari, Marco and Neuhauser, Christof},
	title      = {High-order time-splitting {H}ermite and {F}ourier spectral methods},
	journal    = {J. Comput. Phys.},
	year       = {2009},
	volume     = {228},
	number     = {3},
	pages      = {822--832},
	issn       = {0021-9991},
	bdsk-url-1 = {https://doi.org/10.1016/j.jcp.2008.10.008},
	doi        = {10.1016/j.jcp.2008.10.008},
	fjournal   = {Journal of Computational Physics},
	mrclass    = {65M70},
	mrnumber   = {2477790},
	url        = {https://doi.org/10.1016/j.jcp.2008.10.008},
}

@article{maierhoferwebb25,
	author = {Webb, M. and Maierhofer, G.},
    title = {{Stable Hermite transforms via the Golub--Welsch algorithm}},
	journal={arXiv preprint arXiv:2604.02041},
  year={2026}
}

@article{carles2025time,
	title={{Time splitting and error estimates for nonlinear Schr{\"o}dinger equations with a potential}},
	author={Carles, R{\'e}mi},
	journal={Foundations of Computational Mathematics},
	pages={1--52},
	year={2025},
	publisher={Springer}
}

@article{bunck09,
	author = {Bunck, Benjamin F.},
	id = {Bunck2009},
	isbn = {1572-9125},
	journal = {BIT Numerical Mathematics},
	number = {2},
	pages = {281–295},
	title = {A fast algorithm for evaluation of normalized Hermite functions},
	volume = {49},
	year = {2009}
}

@article{XUE2024134372,
title = {Soliton solutions of derivative nonlinear Schrödinger equations: Conservative schemes and numerical simulation},
journal = {Physica D: Nonlinear Phenomena},
volume = {470},
pages = {134372},
year = {2024},
issn = {0167-2789},
doi = {https://doi.org/10.1016/j.physd.2024.134372},
url = {https://www.sciencedirect.com/science/article/pii/S0167278924003221},
author = {Lianpeng Xue and Qifeng Zhang},
keywords = {Derivative nonlinear Schrödinger equation, Soliton solution, Discrete conservative law, Crank–Nicolson method},
abstract = {In this paper, we numerically study soliton solutions of derivative nonlinear Schrödinger equations based on several conservative finite difference methods. All schemes own second-order accuracy with the convergence order O(τ2+h2) in the discrete L∞-norm, where h denotes the spatial step size and τ denotes the temporal step size. We show that difference schemes preserve some discrete counterparts of continuous conservation laws, and all these schemes are solvable. Extensive numerical examples with soliton solutions are carried out to verify the theoretical results. These results manifest that our schemes have potential application to soliton propagation in optical fibers.}
}

@article{Gauckler2011,
	author = {Gauckler, Ludwig},
	bdsk-url-1 = {https://doi.org/10.1093/imanum/drp041},
	doi = {10.1093/imanum/drp041},
	fjournal = {IMA Journal of Numerical Analysis},
	issn = {0272-4979},
	journal = {IMA J. Numer. Anal.},
	mrclass = {65M70 (35Q55 65M15)},
	mrnumber = {2813177},
	mrreviewer = {Mohammad R. Eslahchi},
	number = {2},
	pages = {396–415},
	title = {Convergence of a split-step {H}ermite method for the {G}ross-{P}itaevskii equation},
	url = {https://doi.org/10.1093/imanum/drp041},
	volume = {31},
	year = {2011}
}

@article{lubich2008splitting,
  title={{On splitting methods for Schr{\"o}dinger-Poisson and cubic nonlinear Schr{\"o}dinger equations}},
  author={Lubich, Christian},
  journal={Mathematics of computation},
  volume={77},
  number={264},
  pages={2141--2153},
  year={2008}
}

@article{iserles2024convergence,
  title={{Convergence of a moving window method for the Schr\"odinger equation with potential on $\mathbb{R}^d$}},
  author={Iserles, Arieh and Li, Buyang and Yao, Fangyan},
  journal={arXiv preprint arXiv:2408.09463},
  year={2024}
}
\addcontentsline{toc}{section}{References}

\begin{appendix}   \section{Properties of $\Sigma^k$}\label{app:properties_Sigma_k}
    \subsection{Algebra property}
    $\Sigma^k$ is an algebra with its associated norm:
    \begin{lemma}
        For $k\in\mathbb{N}, k> d/2$ there is a constant $C>0$ such that for any $u\in\Sigma^0,v\in\Sigma^k$ we have
        \begin{align*}
            \|uv\|_{\Sigma^0}\leq C\|u\|_{\Sigma^0}\|v\|_{\Sigma^k}.
        \end{align*}
        Moreover, there is a constant $C>0$ such that for any $u,v\in \Sigma^k$ we have
        \begin{align*}
            \|uv\|_{\Sigma^k}\leq C\|u\|_{\Sigma^k}\|v\|_{\Sigma^k}.
        \end{align*}
    \end{lemma}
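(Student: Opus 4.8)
The plan is to treat the two summands in the definition $\|uv\|_{\Sigma^k}=\|uv\|_{H^k(\R^d)}+\||x|^k uv\|_{L^2(\R^d)}$ separately, and in each case to exploit the Sobolev embedding $H^k(\R^d)\hookrightarrow L^\infty(\R^d)$, which is available since $k>d/2$. The two claimed inequalities then reduce to an $H^k$ product estimate and to an $L^2$--$L^\infty$ splitting for the weight.

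First I would dispose of the easier first estimate. Since $\Sigma^0=L^2$, Hölder's inequality gives $\|uv\|_{\Sigma^0}=\|uv\|_{L^2}\le \|u\|_{L^2}\|v\|_{L^\infty}$, and the Sobolev embedding together with $\|v\|_{H^k}\le\|v\|_{\Sigma^k}$ yields $\|v\|_{L^\infty}\le C\|v\|_{H^k}\le C\|v\|_{\Sigma^k}$. Recalling $\|u\|_{\Sigma^0}=\|u\|_{L^2}$, this is exactly the first claim.

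For the algebra estimate, the $H^k$-component is precisely the classical statement that $H^k(\R^d)$ is a Banach algebra for $k>d/2$ (a Moser-type Sobolev product estimate), so $\|uv\|_{H^k}\le C\|u\|_{H^k}\|v\|_{H^k}\le C\|u\|_{\Sigma^k}\|v\|_{\Sigma^k}$, which I would simply invoke. The key observation for the weighted component is that $|x|^k$ is a pointwise multiplier and therefore does not require a Leibniz-type distribution across the two factors: we may place the entire weight on a single factor. Concretely, $\||x|^k uv\|_{L^2}\le \|v\|_{L^\infty}\,\||x|^k u\|_{L^2}\le C\|v\|_{H^k}\,\||x|^k u\|_{L^2}\le C\|u\|_{\Sigma^k}\|v\|_{\Sigma^k}$, again using Sobolev embedding for the $L^\infty$-bound on $v$ and $\||x|^k u\|_{L^2}\le\|u\|_{\Sigma^k}$. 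Summing the $H^k$- and weighted components gives the second claim.

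There is no serious obstacle here; the only non-elementary ingredient is the classical Sobolev algebra property of $H^k$, which is standard and can be cited. The point worth emphasising is conceptual rather than technical: unlike a differential operator, the spatial weight $|x|^k$ can be absorbed entirely into one factor, so the weighted part of the $\Sigma^k$-norm is controlled by a plain $L^2$--$L^\infty$ split rather than a full product-rule expansion, and the asymmetry in the first inequality (requiring only $u\in\Sigma^0$) is a direct consequence of the same splitting.
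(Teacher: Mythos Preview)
Your proof is correct and follows essentially the same approach as the paper's: split the $\Sigma^k$-norm into its $H^k$-part (handled by the classical Sobolev algebra property) and its weighted $L^2$-part (handled by an $L^2$--$L^\infty$ Hölder split together with $H^k\hookrightarrow L^\infty$). Your write-up is in fact more explicit than the paper's terse sketch, which refers to the weighted estimate simply as ``Cauchy--Schwarz'' but clearly intends the same $L^2$--$L^\infty$ pairing you spell out.
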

\begin{proof}
   {{The $\||x|^kuv\|_{L^2}$-part of $\|uv\|_{\Sigma^k}$ is controlled by the Cauchy-Schwarz inequality. The remaining $\|uv\|_{H^k}$-part is controlled as stated because when $k>\frac d2$ we have that $H^k(\mathbb R^d)$ is an algebra and we also have that $H^k(\mathbb R^d)\subset L^\infty(\mathbb R^d)$.}}
\end{proof}
\section{Computing with Hermite expansions}\label{sec:computing_with_hermite_expansions}
\subsection{Stable Hermite transforms}\label{app:hermite_transform}
A central ingredient for the practical implementation of our numerical methods is the efficient transform between function values and Hermite coefficients. For the approximation of Hermite coefficients of a function $f(x)$ we use Gauss--Hermite quadrature: let
$\mathbf{x}=(x_0,\dots,x_{M-1})$ be the Gauss--Hermite quadrature
nodes on $\R$ of degree $M$ and $w_k, k=0,\dots M$ be the corresponding quadrature weights. Then
\begin{align*}
	\alpha_m:=\int_{\R} f(x) \mathcal{H}_m(x) dx=\int_{\R} f(x) \mathrm{H}_m(x)e^{\frac{x^2}{2}} e^{-x^2} dx \approx \sum_{k=0}^{M-1}w_kf(x_k)\mathrm{H}_m(x_k)e^{\frac{x_k^2}{2}}.
\end{align*}
 In other words the transformation \textit{from function values $f(x_m)$ to Hermite coefficients $\alpha_m$} can be expressed as the following matrix multiplication
\begin{align*}
	\bm{\alpha}=\mathsf{T}f(\mathbf{x}),
\end{align*}
where $\mathsf{T}_{km}=w_m\mathrm{H}_k(x_m)\exp(x_m^2/2).$ The entries
of $\mathsf{T}$ grow super-exponentially in the number of Hermite
modes $M$, and as a result both the assembly of the matrix and the
computation with this matrix become unstable even for moderate values
of $M$. This leads to numerical instabilities when computing with the Hermite basis and requires additional algorithmic insights to overcome. A stable method for assembling $\mathsf{T}$ was proposed by \cite[Appendix]{bunck09} and, more recently, an improved efficient way of computing with $\mathsf{T}$ was presented in \cite{maierhoferwebb25} (see also the Julia implementations in
\cite{QuantumTimeSteppers,FastGaussQuadrature,FastTransforms}). In all of our computations we use the algorithm presented in \cite{maierhoferwebb25}.
    \subsection{Differentiation}
The following identities are standard (cf. \cite[Appendix]{carles2025scattering}) but in the interest of completeness we include them here. We write the following in the 1D case, but clearly this can be extended to arbitrary $d\geq 1$ in tensor product form. Let us denote the Hermite coefficients of a function $f$ by $\alpha_m$, i.e.
\begin{align*}
	f(x)=\sum_{\substack{m\in\N^d\\0\le m\le M-1}} \alpha_m\mathcal{H}_m(x).
\end{align*}
To compute the derivative of a
Hermite expansion, we recall that the normalised Hermite polynomials are
of the form 
\begin{align*}
\mathrm{H}_{m}=\frac{1}{\sqrt{2^m\sqrt{\pi}m!}} \widetilde{\mathrm{H}}_m,
\end{align*}
where $\widetilde{\mathrm{H}}_m$ satisfies the identity
\begin{equation*}
  \frac{d}{dx}\left(\widetilde{\mathrm{H}}_{m}e^{-x^2/2}\right)=
  x\widetilde{\mathrm{H}}_{m}
  e^{-x^2/2}-\widetilde{\mathrm{H}}_{m+1}e^{-x^2/2},\quad  
  m\ge 0,
\end{equation*}
cf. \cite[18.9.26]{NIST:DLMF}. This leads to the identity
\begin{align}\label{eqn:differentiation_operator}
	f'(x)=\sum_{m\ge 0}\beta_m \mathrm{H}_m e^{-x^2/2},
\end{align}
where $\beta_m=\gamma_m-\sqrt{2(m+1)}\alpha_{m+1}$ and $\gamma_m$ are the Hermite coefficients of $xf(x)$, i.e.
\begin{align*}
	xf(x)=\sum_{m\ge 0}\gamma_m \mathrm{H}_m e^{-x^2/2}.
\end{align*}
Throughout this paper, we denote the differentiation operator by $\mathsf{A}_{\partial}$ such that
\begin{align}\label{eqn:differentiation_operator}
    \bm{\beta}=\mathsf{A}_{\partial}\bm{\alpha}.
\end{align}

Similarly, the action of the Laplacian can most easily be captured using the eigenfunction properties of $\mathcal{H}_m$. Indeed, by \eqref{eq:eigenfunction_property} we have
\begin{align*}
    \Delta f(x)=-(-\Delta+|x|^2)f(x)+|x|^2f(x)=\sum_{\substack{m\in\N^d\\0\le m\le M-1}} (-\lambda_m)\alpha_m\mathcal{H}_m(x)+|x|^2f(x),
\end{align*}
where the eigenvalues $\lambda_m$ are as in \eqref{eqn:details_sigma_norm_in_hermite_basis}. Thus we have, writing $\delta_m, m=0,\dots, M$ for the Hermite coefficients of $\Delta f$:
\begin{align}\label{eqn:Hermite_coeffs_of_laplacian}
    \bm{\delta}=\mathsf{A}_{\Delta}\bm{\alpha}:=-\mathsf{D}_{\lambda}\bm{\alpha}+\mathsf{T}\mathsf{D}_{|x|^2}\mathsf{T}^{-1}\bm{\alpha}
\end{align}
where $\mathsf{D}_{\lambda},\mathsf{D}_{|x|^2}$ are diagonal matrices with non-zero entries $\lambda_0,\dots, \lambda_{M-1}$ and $|x_0|^2,\dots,|x_{M-1}|^2$ respectively.
\subsection{Integration}\label{app:stable_integration}
In this section we describe how to stably compute
\begin{align*}
F(x):=\int_{-\infty}^x |f(y)|^2\,dy    
\end{align*}
for $x\in\mathbb{R}$ from $f(x)=\sum_{0\le m\le M-1} \alpha_m{H}_m(x)$. The difficulty is that, even if $f$ is
well-resolved in the Hermite basis, the primitive $F$
does not decay as $x\to+\infty$ but instead satisfies
$F(x)\to \|f\|_{L^2}^2$. Since rapid decay of Hermite coefficients is tied to
decay of the underlying function at $\pm\infty$, a direct computation of $F$
by inverting the differentiation operator \eqref{eqn:differentiation_operator} the coefficient level is
typically ill-conditioned and leads to a loss
of accuracy in practice.

The way to overcome this is to introduce a correction term that stabilises this inversion. To restore decay, we subtract a smooth step function capturing the asymptotic constant.
Let
\begin{align}\label{eqn:Phi_def}
\Phi(x):=\int_{-\infty}^x \frac{1}{\sqrt{2\pi}}e^{-y^2/2}\,dy
= \frac12\Bigl(1+\mathrm{erf}\bigl(x/\sqrt{2}\bigr)\Bigr).
\end{align}
Then $\Phi(x)\to 0$ as $x\to-\infty$ and $\Phi(x)\to 1$ as $x\to+\infty$.
We define the {corrected primitive}
\begin{align}\label{eqn:corrected_primitive}
\widetilde{F}(x):=F(x)-\Phi(x)\,\|f\|_{L^2}^2.
\end{align}
By construction, $\widetilde{F}(x)\to 0$ as $x\to\pm\infty$, so
$\widetilde{F}$ can be represented accurately with a moderate number of
Hermite modes. Differentiating \eqref{eqn:corrected_primitive} yields
\begin{align}\label{eqn:corrected_rhs}
\widetilde{F}'(x)=|f(x)|^2-\Phi'(x)\,\|f\|_{L^2}^2
=|f(x)|^2-\frac{1}{\sqrt{2\pi}}e^{-x^2/2}\,\|f\|_{L^2}^2.
\end{align}
Since ${H}_0(x)=\pi^{-1/4}e^{-x^2/2}$, we have
\[
\frac{1}{\sqrt{2\pi}}e^{-x^2/2}
=
\frac{1}{\sqrt{2\sqrt{\pi}}}\,H_0(x),
\]
and therefore the correction term in coefficient space affects only the
zeroth mode.

\paragraph{Discrete algorithm.}
Let $\mathbf{g}:=|f(\mathbf{x})|^2$ be the integrand sampled at the Gauss--Hermite quadrature nodes $\mathbf{x}$, and let
$\bm{\rho}$ denote its Hermite coefficients computed via the stable transform described in Appendix~\ref{app:hermite_transform} (an alternative method is presented in \cite{LEIBON2008211}). We approximate the $L^2$-mass by Gauss--Hermite quadrature,
\[
\|f\|_{L^2}^2 \approx \sum_{k=0}^{M-1} w_k |f(x_k)|^2.
\]
Let $\mathbf{e}_0=(1,0,\dots,0)^\top\in\R^M$. We then form the corrected right
hand side in coefficient space as
\begin{align}\label{eqn:rhs_coeffs_corrected}
\bm{\rho}_{\mathrm{corr}}
:=
\bm{\rho}
-
\frac{\|f\|_{L^2}^2}{\sqrt{2\sqrt{\pi}}}\,\mathbf{e}_0.
\end{align}
Next, we compute the Hermite coefficients $\bm{\eta}$ of $\widetilde{F}$ by
solving the linear system
\begin{align}\label{eqn:solve_for_integral}
\mathsf{A}_{\partial}\bm{\eta}=\bm{\rho}_{\mathrm{corr}},
\end{align}
where $\mathsf{A}_{\partial}$ is the differentiation matrix introduced in \eqref{eqn:differentiation_operator}. Finally, we evaluate $\widetilde{F}$ at the quadrature nodes via the inverse
transform $\widetilde{F}(x)=\sum_{m=0}^{M-1}\eta_m H_m(x)$ and recover $F$ by adding back the smooth step function:
\begin{align}\label{eqn:recover_F}
F(\mathbf{x})
=
\widetilde{F}(\mathbf{x})
+
\Phi(\mathbf{x})\,\|f\|_{L^2}^2.
\end{align}
This procedure yields a stable approximation of $F$ at essentially the same
computational cost as the na\"ive inversion approach, while avoiding the loss of
resolution associated with the non-decaying asymptotics of $F$.

\paragraph{Computing the gauge factor $E$.}
In the main text we require
\[
E(x)=\exp\!\left(i\delta\int_{-\infty}^x |f(y)|^2\,dy\right).
\]
Using the values $F(\mathbf{x})$ obtained from \eqref{eqn:recover_F}, we set
\begin{align}\label{eqn:E_discrete}
E(\mathbf{x})=\exp\!\bigl(i\delta\,F(\mathbf{x})\bigr),
\end{align}
which is then used in the R-transform and its inverse.

\section{Proof of Lemma~\ref{lem:approx_of_u_0_v_0}}\label{app:proof_lemma_modified_initial_conditions}
\begin{proof}[Proof of Lemma~\ref{lem:approx_of_u_0_v_0}]
    Here we will show only the estimate \eqref{eqn:estimate_u_variable_initial_condition} since \eqref{eqn:estimate_v_variable_initial_condition} follows similarly. To prove \eqref{eqn:estimate_u_variable_initial_condition} we note firstly that, using Cauchy--Schwarz and Lemma~\ref{lem:Hermite_interpolation},
\begin{align*}
\left|e^{2i\delta\int_{-\infty}^x \mathcal{Q}_M(|\psi_0(y)|^2) -|\psi_0(y)|^2dy}-1\right|&\leq2|\delta|\left|\int_{-\infty}^x \mathcal{Q}_M(|\psi_0(y)|^2) -|\psi_0(y)|^2dy\right|\\
&\leq2|\delta| \left|\int_{-\infty}^x (1+|y|)^{-\alpha}(1+|y|)^{\alpha}\mathcal{Q}_M(|\psi_0(y)|^2) -|\psi_0(y)|^2dy\right|\\
&\leq2|\delta| \underbrace{\left(\int_{-\infty}^x (1+|y|)^{-2\alpha}dy\right)^{\frac{1}{2}}}_{<\infty}\|\mathcal{Q}_M(|\psi_0|^2) -|\psi_0|^2\|_{\Sigma^{\alpha}}\\
&\leq C_2 M^{1/3+\alpha/2-k/2}\|\psi_0\|^2_{\Sigma^k},
\end{align*}
for any $k>\alpha>1/2$ and a constant $C_2>0$ depending on $\alpha,k,\delta$.{Thus we have $\|(E_{0,M}(x))^2-(E_{0}(x))^2\|_{L^\infty}\leq  C_2 M^{1/3+\alpha/2-k/2}\|\psi_0\|^2_{\Sigma^k}$. This allows us to estimate $\|\tilde{u}^0_M-u_M^0\|_{\Sigma^\ell}$ for $\ell\geq 0$ as follows:
\begin{align*}
    \|\tilde{u}^0_M-u_M^0\|_{\Sigma^\ell}&=\|\mathcal{Q}_{M}\!\left[\left((E_{0,M})^2-(E_{0})^2\right)\psi_0\right]\!\|_{\Sigma^\ell}\leq c\|\left((E_{0,M})^2-(E_{0})^2\right)\psi_0\|_{\Sigma^{\ell+1}},
\end{align*}
for a constant $c>0$ depending only on $\ell$. This estimate follows from Lemma~\ref{lem:Hermite_interpolation}. Thus it remains to estimate $\|\left((E_{0,M})^2-(E_{0})^2\right)\psi_0\|_{\Sigma^{\ell+1}}.$} {Using \eqref{eqn:expression_Sigma_k_regularity+decay} we can estimate the Sobolev part and the polynomial part of the $\Sigma^\ell$-norm separately. Firstly we have for $j\geq 0$
\begin{align}\nonumber
\|x^{j}\!\left((E_{0,M})^2-(E_{0})^2\right)\psi_0\|_{L^2}&\leq \|(E_{0,M}(x))^2-(E_{0}(x))^2\|_{L^\infty}\|\psi_0\|_{\Sigma^{j}}\\\label{eqn:L2estimate_twist}
&\leq C_2 M^{1/3+\alpha/2-k/2}\|\psi_0\|^2_{\Sigma^k}\|\psi_0\|_{\Sigma^{j}}.
\end{align}
For the Sobolev part of the $\Sigma^\ell$-norm we will proceed by induction to show for any function $\psi_0$, we have for any $k>\max\{j-1,\alpha\}>1/2$
\begin{align}\label{eqn:estimate_sobolev_part}
    \|\partial_x^j\left[\left((E_{0,M})^2-(E_{0})^2\right)\psi_0\right]\|_{L^2}\leq  M^{1/3+\max\{j-1,\alpha\}/2-k/2}F(\|\psi_0\|_{\Sigma^k}),
\end{align}
where $F$ is a polynomial with positive coefficients depending only on $k,j,\alpha,\delta$. The estimate holds for $j=0$ by \eqref{eqn:L2estimate_twist}. For the induction step we note
\begin{align*}
     \|\partial_x^{j+1}\left[\left((E_{0,M})^2-(E_{0})^2\right)\psi_0\right]\|_{L^2}&=\|\partial_x^{j}\left[\left((E_{0,M})^2-(E_{0})^2\right)\partial_x\psi_0\right]\|_{L^2}+\|\partial_x^{j}\left[\partial_x\!\left((E_{0,M})^2-(E_{0})^2\right)\psi_0\right]\|_{L^2}\\
     &\leq M^{1/3+\max\{j-1,\alpha\}/2-k/2}F(\|\psi_0\|_{\Sigma^{k+1}}) \\
     &\quad +|2\delta|\|\partial_x^{j}\left[\left(\mathcal{Q}_M(|\psi_0|^2)(E_{0,M})^2-|\psi_0|^2(E_{0})^2\right)\psi_0\right]\|_{L^2}\\
     &\leq  M^{1/3+\max\{j-1,\alpha\}/2-k/2}F(\|\psi_0\|_{\Sigma^{k+1}}) \\
     &\quad +|2\delta|\left(\|\partial_x^{j}\left[\left(\mathcal{Q}_M(|\psi_0|^2)(E_{0,M})^2-\mathcal{Q}_M(|\psi_0|^2)(E_{0})^2\right)\psi_0\right]\|_{L^2}\right)\\
     &\quad +|2\delta|\left(\|\partial_x^{j}\left[\left(\mathcal{Q}_M(|\psi_0|^2)(E_{0})^2-|\psi_0|^2(E_{0})^2\right)\psi_0\right]\|_{L^2}\right)\\
     &\leq  M^{1/3+\max\{j-1,\alpha\}/2-k/2}F(\|\psi_0\|_{\Sigma^{k+1}}) \\
     &\quad  +|2\delta|\left(\|\partial_x^{j}\left[\left((E_{0,M})^2-(E_{0})^2\right)\mathcal{Q}_M(|\psi_0|^2)\psi_0\right]\|_{L^2}\right)\\
     &\quad  +|2\delta|\left(\|\partial_x^{j}\left[\left(\mathcal{Q}_M(|\psi_0|^2)-|\psi_0|^2\right)(E_{0}^2\psi_0)\right]\|_{L^2}\right)\\
     &\leq  M^{1/3+\max\{j-1,\alpha\}/2-k/2}F(\|\psi_0\|_{\Sigma^{k+1}}) \\
     &\quad +M^{1/3+\max\{j-1,\alpha\}/2-k/2}\tilde{F}(\|\psi_0\|_{\Sigma^{k+1}})\\
     &\quad + C\|\mathcal{Q}_M(|\psi_0|^2)-|\psi_0|^2\|_{\Sigma^{j+1}}\|E_{0}^2\psi_0\|_{\Sigma^{j+1}}\\
      &\leq  M^{1/3+\max\{j-1,\alpha\}/2-k/2}F(\|\psi_0\|_{\Sigma^{k+1}}) \\
     &\quad +M^{1/3+\max\{j-1,\alpha\}/2-k/2}\tilde{F}(\|\psi_0\|_{\Sigma^{k+1}})\\
     &\quad + CM^{1/3+j/2-k/2}\tilde{\tilde{F}}(\|\psi_0\|_{\Sigma^{k+1}})\\
     &\leq C M^{1/3+\max\{j,\alpha\}/2-k'/2}P(\|\psi_0\|_{\Sigma^{k'}}),
\end{align*}
where in the final step we wrote $k'=k+1$ and $F,\tilde{F},\tilde{\tilde{F}},P$ denote polynomials with positive coefficients that only depend on $k,j,\alpha,\delta$. In summary this implies that for $k'>\ell+1$
\begin{align*}
    \|\left((E_{0,M})^2-(E_{0})^2\right)\psi_0\|_{\Sigma^{\ell+1}}\leq M^{1/3+\max\{\ell,\alpha\}/2-k'/2}F(\|\psi_0\|_{\Sigma^{k'}}),
\end{align*}
and thus
\begin{align*}
    \|\tilde{u}_M^0-u_M^0\|_{\Sigma^\ell}\leq M^{1/3+\max\{\ell,\alpha\}/2-k/2}F(\|\psi_0\|_{\Sigma^k}).
\end{align*}
Similarly we find
\begin{align*}
    \|\tilde{v}_M^0-v_M^0\|_{\Sigma^\ell}\leq M^{1/3+\max\{\ell,\alpha\}/2-k/2}F(\|\psi_0\|_{\Sigma^{k+1}}).
\end{align*}}
\end{proof}
\end{appendix}
\end{document}